\newtheorem{thm}{Theorem}[section]
\newtheorem{prop}[thm]{Proposition}
\newtheorem{lem}[thm]{Lemma}
\newtheorem{cor}[thm]{Corollary}
\newtheorem{fact}[thm]{Fact}
\theoremstyle{definition}
\newtheorem{definition}[thm]{Definition}
\newtheorem{example}[thm]{Example}
\newtheorem{warning}[thm]{Warning}
\newtheorem{question}[thm]{Question}
\theoremstyle{remark}
\newtheorem{remark}[thm]{Remark}
\numberwithin{equation}{section}
\DeclareMathOperator{\liet}{\mathfrak{t}}
\DeclareMathOperator{\liei}{\mathfrak{i}}
\DeclareMathOperator{\liep}{\mathfrak{p}}
\DeclareMathOperator{\cO}{\mathcal{O}}
\DeclareMathOperator{\cK}{\mathcal{K}}
\DeclareMathOperator{\cI}{\mathcal{I}}
\DeclareMathOperator{\cG}{\mathcal{G}}
\DeclareMathOperator{\cU}{\mathcal{U}}
\DeclareMathOperator{\cT}{\mathcal{T}}
\DeclareMathOperator{\cH}{\mathcal{H}}
\DeclareMathOperator{\cE}{\mathcal{E}}
\DeclareMathOperator{\cR}{\mathcal{R}}
\DeclareMathOperator{\cF}{\mathcal{F}}
\DeclareMathOperator{\cD}{\mathcal{D}}
\DeclareMathOperator{\cC}{\mathcal{C}}
\DeclareMathOperator{\cHom}{\mathcal{H}om}
\DeclareMathOperator{\cJ}{\mathcal{J}}
\DeclareMathOperator{\cP}{\mathcal{P}}
\DeclareMathOperator{\cL}{\mathcal{L}}
\DeclareMathOperator{\cA}{\mathcal{A}}
\DeclareMathOperator{\lie}{Lie}
\DeclareMathOperator{\spec}{Spec}
\DeclareMathOperator{\Map}{Map}
\DeclareMathOperator{\Sym}{Sym}
\DeclareMathOperator{\im}{im}
\DeclareMathOperator{\mmod}{-mod}
\DeclareMathOperator{\comod}{-comod}
\DeclareMathOperator{\mset}{-set}
\DeclareMathOperator{\Ind}{Ind}
\DeclareMathOperator{\Hom}{Hom}
\DeclareMathOperator{\Res}{Res}
\DeclareMathOperator{\cone}{cone}
\DeclareMathOperator{\SVect}{SVect}
\DeclareMathOperator{\Vect}{Vect}
\DeclareMathOperator{\rank}{rank}
\DeclareMathOperator{\Coh}{Coh}
\DeclareMathOperator{\Perf}{Perf}
\DeclareMathOperator{\QCoh}{QCoh}
\DeclareMathOperator{\Sh}{Sh}
\DeclareMathOperator{\bX}{\mathbb{X}}
\DeclareMathOperator{\bC}{\mathbb{C}}
\DeclareMathOperator{\bG}{\mathbb{G}}
\DeclareMathOperator{\bF}{\mathbb{F}}
\DeclareMathOperator{\bZ}{\mathbb{Z}}
\DeclareMathOperator{\b1}{\mathds{1}}
\DeclareMathOperator{\bD}{\mathbb{D}}
\DeclareMathOperator{\bL}{\mathbb{L}}
\DeclareMathOperator{\bbN}{\mathbf{N}}
\DeclareMathOperator{\mup}{\mu_p}
\DeclareMathOperator{\bmp}{B\mu_p}
\DeclareMathOperator{\bfp}{\mathbb{F}_p}
\newcommand{\colim@}[2]{%
  \vtop{\m@th\ialign{##\cr
    \hfil$#1\operator@font colim$\hfil\cr
    \noalign{\nointerlineskip\kern1.5\ex@}#2\cr
    \noalign{\nointerlineskip\kern-\ex@}\cr}}%
}
\newcommand{\colim}{%
  \mathop{\mathpalette\colim@{\rightarrowfill@\textstyle}}\nmlimits@
}
\newcommand{\nlim@}[2]{%
  \vtop{\m@th\ialign{##\cr
    \hfil$#1\operator@font lim$\hfil\cr
    \noalign{\nointerlineskip\kern1.5\ex@}#2\cr
    \noalign{\nointerlineskip\kern-\ex@}\cr}}%
}
\newcommand{\nlim}{%
  \mathop{\mathpalette\nlim@{\leftarrowfill@\textstyle}}\nmlimits@
}
\def\presuper#1#2%
\def\acts{\mathrel{\reflectbox{$\righttoleftarrow$}}}
\newcommand{\vsim}{\mathbin{\rotatebox[origin=c]{-90}{$\sim$}}}
\newcommand{\vcong}{\mathbin{\rotatebox[origin=c]{-90}{$\cong$}}}
\newcommand{\veq}{\mathbin{\rotatebox[origin=c]{-90}{$=$}}}
\begin{document}

\title{Steenrod operators, the Coulomb branch and the Frobenius twist, I}

\author{Gus Lonergan}
\address{Department of Mathematics, Massachusetts Institute of Technology, 
Cambridge, MA 02139}
\email{gusl@mit.edu}
%\urladdr{www.math.sc.edu/$\sim$howard} 

\begin{abstract}
In Part I, we use Steenrod's construction to prove that the quantum Coulomb branch is a Frobenius-constant quantization. We will also demonstrate the corresponding result for the $K$-theoretic version of the quantum Coulomb branch. In Part II, we use the same method to construct a functor of categorical $p$-center between the derived Satake categories with and without loop-rotation, which extends the Frobenius twist functor for representations of the dual group. 
\end{abstract}

\maketitle
\tableofcontents

\addtocontents{toc}{\protect\setcounter{tocdepth}{1}}

\section{Introduction}

\subsection{}This paper is about power operations.

\begin{equation*}
\begin{tikzcd}
& \substack{\text{Homological algebra}\\ \text{Steenrod's construction}} \arrow{rd} &\\
\text{Power operations} \arrow{ru}\arrow{rd}&& \text{Coulomb branch}\\
& \text{Frobenius-constant quantizations}\arrow{ru}
\end{tikzcd}
\end{equation*}

A power operation is an enhanced version of a $p^{th}$-power map. One of the most famous examples is Steenrod's operations \cite{Steenrod}, a cornerstone of algebraic topology. In Section \ref{sec2}, we will give an account of Steenrod's construction in the language of derived categories. In these terms, the construction itself is very simple, and it yields not only Steenrod's cohomology operations but also operations in Borel-Moore homology, which are presumably related to the Kudo-Araki-Dyer-Lashof operations \cite{KA},\cite{DL}. A reader who knows about equivariant constructible derived categories on complex algebraic varieties will be able to understand these constructions even if they do not know any homotopy theory. Perhaps this is an advantage.\

\subsection{}In Section \ref{sec3}, we will introduce a different type of power operation, due to Bezrukavnikov-Kaledin \cite{BK}, which is an important tool in non-commutative algebraic geometry. Such a power operation is known as a Frobenius-constant quantization. Essentially, a Frobenius-constant quantization of a commutative algebra $A$ over $\bfp$ is a $1$-parameter flat deformation $A_\hbar$ of $A$ in associative algebras which has a large center; see Subsection \ref{prel} for a precise definition which also justifies regarding such a thing as a power operation. The main example is the Weyl algebra
$$
\bfp[\hbar]\langle x,\partial\rangle/([\partial,x]=\hbar)
$$
which contains $x^p$, $\partial^p$ in its center.\

\subsection{}We will then illustrate a general method to apply Steenrod's construction to produce Frobenius constant quantizations. It is not completely clear just how general this method may be, but heuristically it ought to work whenever the multiplication in $A_\hbar$ is somehow related to, if not directly inherited from, the homotopy-commutative multiplication of a based loop group. The example which we use to illustrate the method is the \emph{quantum Coulomb branch} of Braverman-Finkelberg-Nakajima \cite{BFN} - or rather, its natural characteristic $p$ version. That is, we prove:
\begin{thm}\label{asdf} For any complex reductive algebraic group $G$, and finite-dimensional representation $\bbN$ of $G$, and any odd prime $p$, the quantum Coulomb branch is a Frobenius-constant quantization.\end{thm}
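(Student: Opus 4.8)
The plan is to present the quantum Coulomb branch as a convolution algebra in equivariant Borel--Moore homology and then feed it into the general machine of Section \ref{sec3}. Recall from \cite{BFN} that, after base change to $\bfp$, the quantum Coulomb branch is $\cA_\hbar = H_*^{G_\cO\rtimes\bG_m}(\cR_{G,\bbN})$, the $(G_\cO\rtimes\bG_m)$-equivariant Borel--Moore homology of the variety of triples with its convolution multiplication; here $\hbar\in H^2_{\bG_m}(\mathrm{pt};\bfp)$ is the loop-rotation parameter, $\cA_\hbar$ is free over $\bfp[\hbar]$, and $\cA_\hbar/\hbar\cA_\hbar = H_*^{G_\cO}(\cR_{G,\bbN}) = \cA$ is the commutative classical Coulomb branch. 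So the theorem amounts to checking that the convolution algebra $(\cR_{G,\bbN},m)$, with its loop rotation, satisfies the hypotheses of the criterion of Section \ref{sec3} that manufactures Frobenius-constant quantizations (in the sense of \cite{BK}, recalled in Subsection \ref{prel}) out of ``loop-rotation-commutative'' convolution algebras.

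The geometric heart is to upgrade BFN's proof that $\cA$ is commutative to a $\mup$-equivariant structure that Steenrod's construction (Section \ref{sec2}) can consume. BFN deduce commutativity from a \emph{fusion} degeneration: an ind-scheme over $\bA^1$, equivariant for $G_\cO$ and for $\bG_m$ scaling the base, whose generic fibre is the external square of $\cR_{G,\bbN}$ and whose special fibre is the convolution square $\cR_{G,\bbN}\,\widetilde\times\,\cR_{G,\bbN}$. I will construct the $p$-fold analogue: a $(G_\cO\rtimes\bG_m)$-equivariant ind-scheme over $\bA^1$ whose fibre over $a\neq 0$ is the $p$-fold external product spread over the $p$-th roots of $a$, whose fibre over $0$ is the $p$-fold convolution space, and on which $\mup\subset\bG_m$ acts by cyclically permuting the $p$ roots; the multiplication morphism down to $\cR_{G,\bbN}$ is then equivariant for $\mup\ltimes(G_\cO\rtimes\bG_m)$. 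This ``$\mup$-equivariant diagonal-then-multiply'' morphism is exactly the input that Section \ref{sec2} converts into a power operation. Carrying it out requires care with the standard ind-pro-scheme bookkeeping of \cite{BFN}: finite-dimensional approximations, renormalized dualizing complexes, and compatibility of the $\mup$-action with $G_\cO$-equivariance.

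Granting this, I apply Steenrod's construction as in Section \ref{sec2}: for $\alpha\in\cA$ I form the total power class on the $p$-fold fusion space, made $\mup$-Borel-equivariant, push it forward along the equivariant multiplication into $H_*^{(G_\cO\rtimes\bG_m)\times\mup}(\cR_{G,\bbN})$, and --- using that $\mup\hookrightarrow\bG_m$ induces $\bfp[\hbar]=H^*_{\bG_m}(\mathrm{pt})\to H^*(\bmp;\bfp)=\bfp[y]\otimes\Lambda[x]$ with $\hbar\mapsto y$ up to a unit --- extract the relevant $y$-coefficient to obtain the structure map $\iota$. That $\iota$ is a ring homomorphism from the Frobenius twist $\cA^{(1)}$ into $Z(\cA_\hbar)$, reducing to the relative Frobenius modulo $\hbar$ and compatible with the restricted Poisson structure as demanded in Subsection \ref{prel}, then follows from the formal properties of Steenrod's construction proved in Section \ref{sec2} --- the Cartan formula, additivity, and the computation of the diagonal restriction --- combined with $\hbar$-torsion-freeness of $\cA_\hbar$, which promotes the a priori $\hbar$-approximate identities to genuine ones. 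Concretely: (i) $\iota(\alpha)$ reduces mod $\hbar$ to the $p$-fold convolution product of $\alpha$, i.e.\ $\alpha^p$ by commutativity \cite{BFN}; (ii) $\iota(\alpha)$ is central because it is pushed forward from a $\mup$-symmetric class, so its commutator with any $\beta\in\cA_\hbar$ is $\hbar$-divisible in each degree, hence $0$; (iii) the multiplicativity and additivity of $\iota$, together with the first-order (restricted-Poisson) terms, come from the corresponding Steenrod identities.

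The main obstacle is the middle step: promoting BFN's interpolation to a genuine $\mup$-equivariant fusion ind-scheme over $\bA^1$ together with an equivariant multiplication morphism to $\cR_{G,\bbN}$, in a form to which Steenrod's construction applies verbatim, while honestly tracking the renormalization and equivariance subtleties of the infinite-dimensional setup. A secondary difficulty is to match the resulting operation precisely with the definition of a Frobenius-constant quantization in Subsection \ref{prel} --- the $\hbar$-adic normalization, the identification $\hbar\leftrightarrow y$, and the disposal of the odd generator $x\in H^1(\bmp;\bfp)$, which is where the hypothesis that $p$ is odd enters --- rather than obtaining only some weaker $p$-th-power structure.
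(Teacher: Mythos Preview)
Your high-level strategy --- Steenrod's construction fed into a Beilinson--Drinfeld fusion degeneration --- matches the paper's, and you have correctly located where the work lies. Two corrections are needed. The minor one: the special fibre of the correct family $\cR_{(p)}^{(p)}$ is not the $p$-fold convolution space but $\cR$ itself, with $\mup\subset\bC^*$ acting by loop rotation --- this is precisely the point of fusion, that when the $p$ points collide the BD space becomes a single copy, not a twisted product. So there is no ``multiplication morphism down to $\cR$'' to push along; specialization lands you directly in $H_*^{BM,G(\cO)\rtimes\mup}(\cR)$, which for degree reasons already sits inside $A_\hbar$. (Your proposed family, with special fibre the ordered convolution space, would in any case not carry the cyclic $\mup$-action you need.) Relatedly, $\mup$ and $\bG_m$ should not appear as independent factors: $\mup$ is the subgroup of the loop-rotation $\bC^*$, and the entire mechanism rests on the fact that its permutation action on the generic fibre degenerates to its loop-rotation action on the special fibre.

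The serious gap is centrality. ``The commutator is $\hbar$-divisible, hence $0$'' is a non-sequitur: $\hbar$-divisibility once over gives only centrality modulo $\hbar$, which is just the commutativity of $A$ that you started from. The paper proves centrality via a separate, more elaborate family $\cR_{(p)0}^{(p)0}$ in which the moving cluster of $p$ points (carrying the Steenrod class of $\alpha$) interacts with an additional point \emph{fixed} at $0$ (carrying an arbitrary $\beta\in A_\hbar$). Two convolution ``paths'' through this family are shown to coincide over $Y^*$ --- where both reduce to the identity --- and to specialize at $0$ to left and right multiplication by $F_\hbar(\alpha)$ respectively. Multiplicativity of $F_\hbar$ likewise requires its own two-cluster deformation argument of the same shape. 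These are the substantive steps of the proof and are not supplied by Steenrod's Cartan-type identities alone.
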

The Coulomb branch is the $G$-equivariant Borel-Moore homology of a certain algebraic space $\cR$; the quantum Coulomb branch is obtained by switching on loop-rotation equivariance. The key geometric insight behind the Theorem is that, following ideas of Beilinson-Drinfeld \cite{BD}, one may deform the space $\cR$ with its $\mup$-action by loop rotation, to $\cR^p$ with its $\mup$-action by permuting the factors cyclically.\

\subsection{}This is already quite a broad class of examples. For instance, it includes partially spherical rational Cherednik algebras, see \cite{BEF}, \cite{Web}. It is expected that the same underlying geometry will lead to the discovery of large centers of related algebras. In fact, in Section \label{sec4} we indicate how the same underlying geometry shows that the $K$-theoretic version of \emph{integral} quantum Coulomb branch, which is itself a $q$-deformation of the $K$-theoretic version of the Coulomb branch, admits a large center when $q$ is evaluated at any complex root of unity (not necessarily of prime order). Essentially the only difference with the homological case is to replace Steenrod's construction with a so-called `Adams construction' which is to Adams operations as Steenrod's construction is to Steenrod's operations.

\begin{question}Are there such things as equivariant elliptic Borel-Moore homology and equivariant elliptic $K$-homology? Can we similarly deduce the analogous `large center' statements for these in the loop group situation?\end{question}

\subsection{}In part II of this work, we apply the same method in the case $\bbN=0$ but to the entire Satake category. We thus obtain a triangulated monoidal central functor
$$
F_\hbar: D^b_{G(\cO)}(Gr_G,\bfp)\to D^b_{G(\cO)\rtimes\bC^*}(Gr_G,\bfp)[\hbar^{-1}]
$$
where $\hbar$ is the first Chern class of $B\bC^*$. The approach is entirely analogous to Gaitsgory's `central sheaves' \cite{Gaits}. There is a `perverse Tate cohomology' functor $\presuper{p}H^0_{\text{Tate}}:D^b_{G(\cO)\rtimes\bC^*}(Gr_G,\bfp)[\hbar^{-1}]\to \text{Perv}_{sph}(Gr,\bfp)$, and we show:

\begin{thm}\label{hoho} Let $G^\vee$ denote the Langlands dual group to $G$ over $\bfp$. Let $S:\text{Rep}(G^\vee)\xrightarrow{\sim} \text{Perv}_{sph}(Gr,\bfp)$ be the geometric Satake equivalence \cite{MV}. Then the functor
$$
S^{-1}\circ \presuper{p}H^0_{\text{Tate}}\circ F_\hbar\circ S:\text{Rep}(G^\vee)\to \text{Rep}(G^\vee)
$$
is isomorphic to the Frobenius twist functor.\end{thm}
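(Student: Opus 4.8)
\emph{Outline of the argument.} Write $\Phi := S^{-1}\circ {}^{p}H^{0}_{\text{Tate}}\circ F_\hbar\circ S$ for the endofunctor of $\text{Rep}(G^\vee)$ in question and $\mathrm{Fr}\colon G^\vee\to G^\vee$ for the Frobenius isogeny, so that the Frobenius twist functor is $\mathrm{Fr}^{*}$ — which is $\bfp$-linear, $\bfp$ being the prime field, the twist residing entirely in the $G^\vee$-action. I plan to prove: (a) $\Phi$ is $\bfp$-linear, exact, and symmetric monoidal, so that — composing with the forgetful functor, which is then automatically a fibre functor and, by Lang's theorem over the finite field $\bfp$, isomorphic to the standard one — Tannakian reconstruction presents $\Phi$ as $\phi^{*}$ for a homomorphism of group schemes $\phi\colon G^\vee\to G^\vee$, unique up to $G^\vee$-conjugacy; and (b) $\Res\circ\Phi\cong\mathrm{Fr}^{*}_{T^\vee}\circ\Res$ as monoidal functors, where $\Res\colon\text{Rep}(G^\vee)\to\text{Rep}(T^\vee)$ is restriction to a maximal torus. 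Granting (a) and (b): with $\iota\colon T^\vee\hookrightarrow G^\vee$, statement (b) says the homomorphisms $\phi\circ\iota$ and $\iota\circ\mathrm{Fr}_{T^\vee}$ are $G^\vee$-conjugate, so after conjugating $\phi$ — which leaves $\Phi$ unchanged up to monoidal isomorphism — we may assume $\phi|_{T^\vee}=\mathrm{Fr}_{T^\vee}$. Then $\phi$ preserves $T^\vee$, is an isogeny (a positive-dimensional kernel would meet $T^\vee$), and acts by multiplication by $p$ on $X^{*}(T^\vee)$, hence sends each root to $p$ times itself; this is precisely the $p$-morphism of root data attached to $\mathrm{Fr}_{G^\vee}$, so by Steinberg's classification of isogenies $\phi=\mathrm{Fr}_{G^\vee}$ up to conjugacy, i.e. $\Phi\cong\mathrm{Fr}^{*}$.

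\emph{Step (a): structural properties of $\Phi$.} That $\Phi$ is $\bfp$-linear is immediate, and that it is exact and lands in $\text{Perv}_{sph}(Gr,\bfp)$ — which is what allows $S^{-1}$ to be applied — should follow from perverse $t$-exactness of ${}^{p}H^{0}_{\text{Tate}}\circ F_\hbar$, the present counterpart of the perverse $t$-exactness of nearby cycles on the Satake category. Monoidality has two inputs: $F_\hbar$ is monoidal by construction, and ${}^{p}H^{0}_{\text{Tate}}$ is monoidal provided Tate localization commutes with the external (and hence, via fusion, convolution) product of Satake sheaves — a Künneth theorem for their $\bC^*$-equivariant Tate cohomology — which should follow from parity of Satake sheaves on $Gr_G$, forcing their $\bC^*$-equivariant cohomology to be free over $H^{*}_{\bC^*}(\mathrm{pt})=\bfp[\hbar]$. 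Finally, symmetry of this monoidal structure is inherited from the \emph{central} structure on $F_\hbar$ (in the sense of \cite{Gaits}): the constraints $F_\hbar(\cF)\star\cG\xrightarrow{\sim}\cG\star F_\hbar(\cF)$ restrict, when $\cG$ too lies in the image of $F_\hbar\circ S$, to a braiding which is symmetric because convolution of Satake sheaves is.

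\emph{Step (b): reduction to a torus and the fusion computation.} This is the geometric heart. Hyperbolic localization in the style of \cite{MV} provides a functor $D^{b}_{G(\cO)}(Gr_G,\bfp)\to D^{b}_{T(\cO)}(Gr_T,\bfp)$ — take (co)homology along the semi-infinite orbits — realizing, under geometric Satake for $G$ and for $T$, the restriction $\Res\colon\text{Rep}(G^\vee)\to\text{Rep}(T^\vee)$, and likewise after adding loop-rotation equivariance and inverting $\hbar$. The key structural point is that the Beilinson--Drinfeld degeneration of $\cR=Gr_G$ into $\cR^{p}$ — recalled after Theorem~\ref{asdf}, following \cite{BD} — restricts on the torus to the corresponding degeneration of $Gr_T$ and is compatible with the semi-infinite / attracting-cell stratification; since Steenrod's construction is built from proper pull-push, it therefore commutes with hyperbolic localization, giving $\Res\circ\Phi\cong\Phi_T\circ\Res$ with $\Phi_T$ the analogue of $\Phi$ for $T$. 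Now $Gr_T$ is the discrete set $X_{*}(T)$, convolution is addition, and $\text{Perv}_{sph}(Gr_T,\bfp)=\text{Rep}(T^\vee)$ is the category of $X_{*}(T)$-graded $\bfp$-vector spaces (via $X_{*}(T)=X^{*}(T^\vee)$). The crucial computation is that the $p$-fold cyclic fusion appearing in the degeneration of $\cR$ into $\cR^{p}$ carries the point $\lambda\in X_{*}(T)$ to the point $p\lambda$; chasing the skyscraper at $\lambda$ through Steenrod's construction — form its $p$-fold external product, degenerate, invert $\hbar$, apply ${}^{p}H^{0}_{\text{Tate}}$ — then yields the skyscraper at $p\lambda$. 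Hence $\Phi_T$ carries the weight-$\lambda$ line of $T^\vee$ to the weight-$p\lambda$ line, i.e. $\Phi_T\cong\mathrm{Fr}^{*}_{T^\vee}$; combined with $\Res\circ\Phi\cong\Phi_T\circ\Res$ this gives (b).

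\emph{Main obstacle.} The difficulty lies in Steps (a) and (b), not in the Tannakian endgame. The two genuinely delicate points are: (i) the Künneth theorem for $\bC^*$-equivariant Tate cohomology of Satake sheaves, needed for monoidality of ${}^{p}H^{0}_{\text{Tate}}$, which requires controlling that cohomology — via parity/formality of these sheaves on $Gr_G$ — precisely enough that Tate localization is a tensor functor; and (ii) the compatibility of the Beilinson--Drinfeld degeneration of $\cR$ with the semi-infinite geometry of $Gr_G$, which underpins the reduction to a torus and has to be proved at the level of the total space of the degeneration rather than fibrewise. Once these, along with the perverse $t$-exactness of ${}^{p}H^{0}_{\text{Tate}}\circ F_\hbar$, are established, the identification of $\Phi$ with the Frobenius twist is forced by the one-line fusion computation on $Gr_T$ and Steinberg's isogeny theorem.
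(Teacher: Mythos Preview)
The paper does not contain a proof of this theorem. Theorem~\ref{hoho} is stated in the introduction as a preview of Part~II; the present paper (Part~I) proves only Theorem~\ref{asdf}. The sole hints given are that ``the approach is entirely analogous to Gaitsgory's `central sheaves'\,'' and that for representations admitting a $\bZ_p$-free lift one can ``cook up its Frobenius twist `on the nose' using the same geometry.'' So there is nothing to compare your proposal against.

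That said, your Tannakian strategy --- establish that $\Phi$ is an exact $\bfp$-linear tensor functor, identify it with $\phi^*$ for some $\phi:G^\vee\to G^\vee$, reduce to a torus via the weight functors, then invoke the isogeny theorem --- is a perfectly reasonable line of attack and is consistent with the Gaitsgory-style hint (the central/monoidal structure on $F_\hbar$ is to come from the Beilinson--Drinfeld degeneration, exactly as the commutativity of $A^*$ does in Section~\ref{sec3}). Two cautions. First, your proposed mechanism for monoidality of ${}^pH^0_{\text{Tate}}$ --- a K\"unneth theorem coming from parity of Satake sheaves --- is not available in general: with $\bfp$-coefficients the IC sheaves on $Gr_G$ need not be parity for small $p$. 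The intended argument is almost certainly geometric, via factorization on the BD Grassmannian, rather than via freeness over $\bfp[\hbar]$. Second, the perverse $t$-exactness of ${}^pH^0_{\text{Tate}}\circ F_\hbar$ is a genuine claim that you have asserted but not argued; in Gaitsgory's setting this is the statement that nearby cycles preserve perversity, and the analogue here needs a proof. Your torus computation and the isogeny-theorem endgame look fine.
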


The operation of perverse Tate cohomology is strictly necessary in general. However, if a representation of $G^\vee$ has a $\bZ_p$-free lift, then we can cook up its Frobenius twist `on the nose' using the same geometry. We will formulate this precisely, and propose some links to derived geometric Satake and the Finkelberg-Mirkovic conjecture, in the next installment.\

\begin{warning}The proof of Theorem \ref{asdf} relies quite heavily on the theory of placid ind-schemes, dimension theories etc., see \cite{Rask}. The first half of Section \ref{sec3} simultaneously reviews this theory and introduces the examples which are relevant for us. As such it is written to be reasonably convincing, with the key facts explained in full detail, but with some details missing. All of the details are available in \cite{Rask}, which the reader is strongly recommended to read.\end{warning}

\subsection*{Acknowledgements}I wish to thank many people for their interest, help and encouragement, even if they are not aware of it, including D. Ben-Zvi, A. Braverman, P. Etingof, V. Ginzburg, I. Loseu, D. Vogan. Special thanks go to D. Nadler and V. Drinfeld for inviting me to speak about this project at their seminars, causing a dramatic improvement in my productivity levels; to G. Williamson, for raising the question\footnote{At the AIM Workshop: Sheaves and modular representations of reductive groups, March 28 to April 1, 2016.}: `What is the representation-theoretic meaning of the Steenrod algebra action on mod-p cohomology?', which this project is an attempt to answer; to my adviser R. Bezrukavnikov, for eternal patience and support; and to J. Louveau, whom this is for.

\addtocontents{toc}{\protect\setcounter{tocdepth}{2}}

\section{Steenrod's construction}\label{sec2}

\subsection{Overview}Let $p$ be an odd prime number, and let $\mup$ be the group of complex $p^{th}$ roots of unity. Let $R$ be a commutative ring. Let $k$ be a field of characteristic $p$, and let $F:k\to k$ be the Frobenius map. Let $X$ be a topological space and let $D^b(X,R)$ denote the bounded derived category of sheaves of $k$-modules on $X$. We write $X^{\mup}$ for $\Map(\mup,X)$. Following Steenrod \cite{Steenrod}, we construct a functor
$$
St:D^b(X,R)\to D^b_{\mup}(X^{\mup},R)
$$
where $D^b_{\mup}(X^{\mup},R)$ denotes the bounded $\mup$-equivariant derived category of sheaves of $R$-modules on $X^{\mup}$. This functor is not linear or triangulated, but nonetheless if we take $R=k$, compose with restriction to the diagonal and apply to morphisms between shifted constant sheaves we obtain linear maps 
\begin{align}\label{st1}
F^*H^n(X,k)\to H^{pn}_{\mup}(X,k)\cong \bigoplus_{i+j=pn}H^i(X,k)\otimes H^j(\bmp,k)
\end{align}
for each $n\geq0$. Recall that $H^*(\bmp,k)=k[a,\hbar]$ is the super-polynomial algebra in one variable $a$ of degree $1$ and one variable $\hbar$ of degree $2$. Here $\hbar$ is the first Chern class of the tautological complex line bundle on $\bmp$ arising from the embedding $\mup\subset \bC^*$.\

The direct sum of the maps of equation \ref{st1} is not in the most naive sense an algebra homomorphism. This fact led Steenrod to introduce certain correction factors which make it so; his famous cohomology operations are then defined to be the coefficients of the resulting algebra homomorphism in the monomial basis of $k[a,\hbar]$. However, the sum of maps of equation \ref{st1} \emph{does} give a homomorphism of super-graded algebras
\begin{align}\label{st2}
H^*(X,k)^{(1)}\to H^*_{\mup}(X,k)\cong H^*(X,k)[a,\hbar]
\end{align}
where $H^*(X,k)^{(1)}$ denotes the \emph{Frobenius twist} of $H^*(X,k)$. Naively one might think that this is just the $p$-dilation of $F^*H^*(X,k)$. This is wrong: rather, the natural and correct definition of the Frobenius twist of an algebra $A$ in any symmetric monoidal category over $k$ is as the Tate cohomology:
$$
A^{(1)}:= \hat{H}^0_{\mup}(A^{\otimes\mup})
$$
where the symmetric monoidal structure endows $A^{\otimes\mup}$ with the structure of $\mup$-equivariant algebra. In the case of the super-graded $k$-algebra $H^*(X,k)$, the underlying super-graded $k$-module of this construction is the same as the $p$-dilation of $F^*H^*(X,k)$, but the multiplication differs by a sign\footnote{When $p\equiv 3\mod4$.}, removal of which is part of the purpose of Steenrod's correction factors.\

We prefer therefore to use Steenrod's operations in their raw form - that is, without the correction factors and packaged as in equation \ref{st2}. This has the advantage of revealing the fundamental connected between Steenrod's operations and the Artin-Schreier map, which is obscured by the correction factors:

\begin{fact}\label{torusfact} Let $X=BT$ for some complex torus $T$. Then the Picard group of $X$ is canonically isomorphic to the character lattice $\bX^\bullet(T)$ of $T$, and the cohomology ring is the polynomial algebra
$$
H^*(X,\bZ) = \Sym_{\bZ}\bX^\bullet(T)
$$
with $\bX^\bullet(T)$ in degree $2$. This is equal to the ring $\cO(\liet_{\bZ})$ of polynomial functions on the canonical $\bZ$-form of the scheme $\liet=\lie(T)$. Likewise we have
$$
H^*(X,k) = \cO(\liet_{k})
$$
where $\liet_{k}$ denotes the canonical $k$-form of $\liet$. Under this identification, the map of equation \ref{st2} factors as
$$
\cO(\liet_{k})^{(1)}\xrightarrow{AS_\hbar} \cO(\liet_{k})[\hbar] \subset \cO(\liet_{k})[a,\hbar]
$$
where $AS_\hbar$ corresponds, on the level of $\bar{k}$-points, to the $\bar{k}^\times$-equivariant family, parameterized by $\hbar\in \bar{k}$, of additive maps of free $\bar{k}$-modules
$$\begin{matrix}
\bar{k}\otimes\liet_{k} & \to & \bar{k}^{(1)}\otimes\liet_{k}\\
\sum_ix_i\otimes v_i & \mapsto & \sum_i(x_i^p-\hbar^{p-1}x_i)\otimes v_i\end{matrix}
$$
for a basis $\{v_i\}$ of $\liet_{\bfp}$. This family interpolates between the usual Artin-Schreier map for $\hbar=1$ and the Frobenius map for $\hbar=0$.\end{fact}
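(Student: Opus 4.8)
The plan is to reduce the whole statement to a single equivariant Euler class computation on $B\bG_m\times\bmp$. Since the map of equation \ref{st2} is a homomorphism of super-graded $k$-algebras and $H^*(X,k)=\cO(\liet_k)$ is the polynomial algebra generated in degree $2$ by $H^2(X,k)=\bX^\bullet(T)\otimes k$, the source $\cO(\liet_k)^{(1)}$ is again polynomial, on $\bX^\bullet(T)^{(1)}$ in (dilated) degree $2p$ — indeed $\cO(\liet_k)$ is purely even, so its Frobenius twist carries no sign anomaly and is literally $\cO(\liet_k^{(1)})$. Hence \ref{st2} is determined by its restriction to the generators $\bX^\bullet(T)^{(1)}$, i.e.\ by $St$ applied to a line bundle. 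Moreover $\cO(\liet_k^{(1)})$ and $\cO(\liet_k)[\hbar]$ are concentrated in even degrees, whereas the odd part of $\cO(\liet_k)[a,\hbar]$ is exactly the ideal generated by the degree-$1$ class $a$; so \ref{st2} automatically lands in $\cO(\liet_k)[\hbar]$, and nothing about the factorisation remains to be proved beyond computing the map on generators. Finally, a character $\lambda\in\bX^\bullet(T)$ is the datum of a map $f_\lambda\colon BT\to B\bG_m$ with $\lambda=f_\lambda^*t$, where $t\in H^2(B\bG_m,k)$ is the canonical generator; by naturality of Steenrod's construction — and of restriction to the diagonal and of passage to morphisms of shifted constant sheaves — the value of \ref{st2} on $\lambda^{(1)}$ is $f_\lambda^*$ applied to its value on $t^{(1)}$. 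So everything reduces to the universal case $T=\bG_m$.

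For $X=B\bG_m$ one has $X^{\mup}=\Map(\mup,X)=X^p=B(\bG_m^p)$ with $\mup$ permuting the factors cyclically, so $H^*_{\mup}(X^p,k)=H^*(B(\bG_m^p\rtimes\mup),k)$. Unwinding the definition of $St$, the image of $t^{(1)}$ under \ref{st2} is obtained by equipping $t^{\boxtimes p}\in H^{2p}(X^p,k)$ with its tautological $\mup$-equivariant structure and restricting along the $\mup$-equivariant diagonal $X\hookrightarrow X^p$ (trivial action on the source), landing in $H^{2p}_{\mup}(X,k)=(k[t]\otimes H^*(\bmp,k))^{2p}$. Now $t^{\boxtimes p}=\prod_{i=1}^p t_i$ is the Euler class of the bundle attached to the representation $\bigoplus_{i=1}^p\chi_i$ of $\bG_m^p$, with $\chi_i$ the $i$-th coordinate character; since $\mup$ permutes the $\chi_i$ this representation extends to $\bG_m^p\rtimes\mup$, so $t^{\boxtimes p}$ is an equivariant Euler class — and one checks this equivariant structure agrees with the one coming from Steenrod's construction. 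Restriction along the diagonal $\bG_m\times\mup\hookrightarrow\bG_m^p\rtimes\mup$ carries $\bigoplus_i\chi_i$ to $\chi\boxtimes(\bigoplus_{j=0}^{p-1}\ell^j)$, where $\chi$ is the standard character of $\bG_m$ and $\ell$ is the standard character of $\mup$ (so that $c_1(\ell)=\hbar$); hence the image of $t^{(1)}$, restricted to the diagonal, equals
$$
\prod_{j=0}^{p-1}c_1(\chi\otimes\ell^j)=\prod_{j\in\bfp}(t+j\hbar)=t^p-\hbar^{p-1}t,
$$
the last equality by homogenising the identity $\prod_{j\in\bfp}(T-j)=T^p-T$ in $\bfp[T]$. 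This contains no $a$, confirming the factorisation, and under $t\leftrightarrow x$ — after applying $f_\lambda^*$ and extending multiplicatively — it is exactly the family $AS_\hbar$ of the statement, with the asserted specialisations to the Artin–Schreier map at $\hbar=1$ and to the Frobenius map at $\hbar=0$.

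The main obstacle is the step of unwinding the abstract derived-category definition of $St$ carefully enough to see that it produces $t^{\boxtimes p}$ with precisely its \emph{Steenrod} $\mup$-equivariant structure, and then matching that structure with the one exhibiting $t^{\boxtimes p}$ as the equivariant Euler class of $\bigoplus_i\chi_i$ over $B(\bG_m^p\rtimes\mup)$; everything else is formal. It is worth emphasising that the torus case comes out so cleanly precisely because $BT$ has cohomology concentrated in even degrees, so no Koszul signs intervene — the signs, and Steenrod's correction factors which exist to cancel them, are simply invisible in this computation.
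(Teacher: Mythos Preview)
Your argument is correct and takes a genuinely different route from the paper's. The paper proves this in the subsection titled ``Artin--Schreier'' in two ways. The quick way quotes the standard values of the Steenrod operations on a degree-$2$ class ($P^0(x)=x$, $P^1(x)=x^p$, higher $P^i$ vanish) and reads off $St_{in}(x)=x^p-\hbar^{p-1}x+\hbar^{p-2}\beta(x)$ from the correction-factor formula; on $BT$ the Bockstein vanishes and the result drops out. The paper's ``direct'' way reduces to $T=S^1$, then further to $BC_p$ for $C_p\subset S^1$, and \emph{characterises} the answer rather than computing it: $b^p-\hbar^{p-1}b$ is the unique degree-$2p$ element of $k[b,\hbar]$ which equals $b^p$ at $\hbar=0$ and vanishes at $b=t\hbar$ for every $t\in\bfp$. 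The vanishing is verified not by any calculation but by exhibiting an isomorphism of functors $(id\times t)^*\circ\Delta^*\circ St_D\cong St_D\circ i^*$ on $D^b(k[C_p]\mmod)$, where $i^*$ forgets the $C_p$-equivariance and hence kills $b$.

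Your route stays on $B\bG_m$ and computes rather than characterises: you identify $St_D(t)$ as the $\mup$-equivariant Euler class of $L^{\boxplus p}$, restrict to the diagonal, split the complex regular representation of $\mup$ as $\bigoplus_j\ell^j$, and read off $\prod_{j=0}^{p-1}(t+j\hbar)=t^p-\hbar^{p-1}t$ directly. The step you flag --- matching the Steenrod equivariant structure on $t^{\boxtimes p}$ with the Euler-class one --- is exactly what the paper's six-functor compatibility of $St_D$ (Subsection~\ref{six}) supplies: express $e(L)$ via $s^*$, $s_!$ for the zero section and transport through $St_D(f^?)\cong(f^{\mup})^?St_D$. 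Your approach has the virtue of producing the Artin--Schreier polynomial by a one-line product identity over $\bfp$; the paper's direct approach has the virtue of never needing to name the equivariant class, only to test it against $p$ specialisations.
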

%\item Ultimately, it is the functor $St$, rather than its residual cohomology operations, which is the important thing. I do not know how to introduce Steenrod's correction factors in this setting, and anyway it is probably unreasonable to do so since in the triangulated setting one can no longer be so naive about symmetric monoidal structures.\end{enumerate}
\begin{remark}The appearance of $AS_\hbar$ in the topological setting was the first indication that Steenrod's construction might be related to the theory of Frobenius-constant quantizations, where $AS_\hbar$ plays a central role, see Fact \ref{frobconstfact}.\end{remark}

\subsection{Steenrod's construction}\label{stconst}

Recall that $p$ is an odd prime, $\mup$ is the group of complex $p^{th}$ roots of unity, $R$ is a commutative ring, $k$ is a field of characteristic $p$ and $X$ is a topological space. We denote by $C^b(X,R)$, $D^b(X,R)$ the (bounded) cochain, derived categories of sheaves of $R$-modules on $X$. If $Y$ is a topological space with an action of $\mup$, we denote by $C^b_{\mup}(Y,R)$, $D^b_{\mup}(Y,R)$ the corresponding $\mup$-equivariant categories. Since $\mup$ is a finite group, these are the same as the (bounded) cochain, derived categories of $\mup$-equivariant sheaves of $R$-modules on $Y$.\

Consider the functor of $p^{th}$ external tensor power
$$
C^b(X,R)\xrightarrow{\boxtimes p} C^b(X^{\mup},R).
$$
It sends quasi-isomorphisms to quasi-isomorphisms, and so descends to a (non-triangulated) functor 
$$
D^b(X,R)\xrightarrow{\boxtimes p} D^b(X^{\mup},R)
$$
by the universal property of derived categories. Notice that the cochain-level functor factors as
$$
\boxtimes p:C^b(X,R)\xrightarrow{St_C}C^b_{\mup}(X^{\mup},R)  \xrightarrow{} C^b(X^{\mup},R).
$$
To make this explicit, we first choose an isomorphism 
$$
\mup\cong \bZ/p\cong\{1,\ldots,p\}=:[p];
$$
the result will be independent of this choice. Write $\sigma$ for the generator of $\mu_p$ corresponding to $1$ under the isomorphism. Then, for a complex $A^\bullet$, we give the complex
$$
(A^\bullet)^{\boxtimes p}=\left(\bigoplus_{i_1+\ldots+ i_p=\bullet}A^{i_1}\boxtimes\ldots\boxtimes A^{i_p}\right)^\bullet
$$
the $\mu_p$-equivariant structure by letting the generator $\sigma$ act by the direct sum of the canonical isomorphisms of sheaves
$$
A^{i_1}\boxtimes\ldots\boxtimes A^{i_p}\cong \sigma^*(A^{i_2}\boxtimes\ldots\boxtimes A^{i_p}\boxtimes A^{i_1})
$$
each twisted by the sign $(-1)^{i_1(n-i_1)}$. The sign twist is the natural (Koszul) choice which makes the action of $\mu_p$ commute with the differential. Moreover, given a chain map $f:A^\bullet\to B^\bullet$, $f^{\boxtimes p}$ is automatically a $\mu_p$-equivariant chain map. Since the functor $C^b_{\mup}(X^{\mup},R)\to C^b(X^{\mup},R)$ reflects quasi-isomorphisms, it follows immediately that $St_C$ descends to a functor $St_D$ as below:
$$
\boxtimes p:D^b(X,R)\xrightarrow{St_D}D^b_{\mup}(X^{\mup},R)  \xrightarrow{} D^b(X^{\mup},R).
$$
Writing $\Sigma$ for the suspension functor, we have $St_D\Sigma\cong\Sigma^pSt_D$. Also, $St_D$ is not triangulated, nor additive or even linear. The following two propositions control the failure of linearity.

\begin{prop}\label{induced}Suppose given two parallel morphisms $f,g:A^\bullet\to B^\bullet$ in $D^b(X,R)$. Then the morphism
$$
St_D(f+g)-St_D(f)-St_D(g):St_D(A^\bullet)\to St_D(B^\bullet)
$$
is an induced map. That is, there exists some \textbf{non-equivariant} map
$$
h:(A^\bullet)^{\boxtimes\mup}\to (B^\bullet)^{\boxtimes\mup}
$$
such that the \textbf{equivariant} map
$$
Av(h)=\sum_{x\in\mup}xhx^{-1}: St_D(A^\bullet)\to St_D(B^\bullet)
$$
is equal to $St_D(f+g)-St_D(f)-St_D(g)$.\end{prop}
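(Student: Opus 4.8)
The plan is to push everything down to the cochain level, where $St_D$ is literally the $p$-fold external tensor power $\boxtimes p$, and there combine multilinearity with an orbit count governed by Fermat's little theorem.

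\emph{Reduction to chain maps.} First I would replace $f$ and $g$ by honest chain maps with a common domain. Since $D^b(X,R)$ is the localization of the homotopy category of $C^b(X,R)$ at quasi-isomorphisms, the calculus of fractions provides a quasi-isomorphism $s\colon \tilde A\to A$ and chain maps $\tilde f,\tilde g\colon \tilde A\to B$ with $f=\tilde f\circ s^{-1}$ and $g=\tilde g\circ s^{-1}$ in $D^b(X,R)$. As $St_D$ is a functor (though not an additive one) and $St_D(s)=s^{\boxtimes p}$ is an equivariant isomorphism, we get $St_D(f)=St_D(\tilde f)\circ St_D(s)^{-1}$ and likewise for $g$ and for $f+g=(\tilde f+\tilde g)\circ s^{-1}$. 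Hence it is enough to produce a non-equivariant map $h_0\colon (\tilde A)^{\boxtimes\mup}\to (B^\bullet)^{\boxtimes\mup}$ with $Av(h_0)=St_D(\tilde f+\tilde g)-St_D(\tilde f)-St_D(\tilde g)$: then $h:=h_0\circ St_D(s)^{-1}$ works, because $St_D(s)^{-1}$ is equivariant and $Av(h_0\circ\psi)=Av(h_0)\circ\psi$ for equivariant $\psi$. So from now on I may assume $f,g\colon A^\bullet\to B^\bullet$ are chain maps.

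\emph{Multilinear expansion and orbit count.} At the cochain level $\boxtimes p$ is multilinear, so $St_C(f+g)=(f+g)^{\boxtimes p}=\sum_{\epsilon}\epsilon^{\boxtimes p}$, the sum over all functions $\epsilon\colon[p]\to\{f,g\}$, where $\epsilon^{\boxtimes p}:=\epsilon(1)\boxtimes\cdots\boxtimes\epsilon(p)$ (in general only a non-equivariant chain map). The two constant functions contribute precisely $St_C(f)$ and $St_C(g)$, so $St_C(f+g)-St_C(f)-St_C(g)=\sum_{\epsilon\text{ non-constant}}\epsilon^{\boxtimes p}$. Now $\mup\cong\bZ/p$ acts on $[p]$ by translation, hence on the set of non-constant $\epsilon$; since $p$ is prime each orbit has size $1$ or $p$, and size $1$ would force $\epsilon$ to be constant, so every one of these orbits is free of cardinality $p$. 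Picking one representative per orbit, let $h_0$ be the sum of the corresponding maps $\epsilon^{\boxtimes p}$.

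\emph{Identifying $Av(h_0)$.} It remains to verify the identity $\sigma^k\,\epsilon^{\boxtimes p}\,\sigma^{-k}=(\sigma^k\!\cdot\!\epsilon)^{\boxtimes p}$, where $\sigma^k\!\cdot\!\epsilon$ is the translated function; summing it over $k\in\bZ/p$ and over the chosen representatives then rewrites $Av(h_0)$ as the sum over all non-constant $\epsilon$, as desired. This is the one delicate point: conjugation by the generator $\sigma$ brings in a Koszul sign from the $\sigma^{-1}$-action on $(A^\bullet)^{\boxtimes p}$ and another from the $\sigma$-action on $(B^\bullet)^{\boxtimes p}$, but since $f$ and $g$ are degree-$0$ maps, $\epsilon^{\boxtimes p}$ preserves all multidegrees, so the two signs are computed from the same data and cancel. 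Checking that the sign twist built into the $\mup$-equivariant structure therefore contributes nothing once one restricts to chain maps is, I expect, the only step requiring genuine care; the orbit count is immediate from Fermat's little theorem, and everything else — including the final descent of $h_0$ to $D^b$ after reincorporating $St_D(s)^{-1}$ — is formal manipulation in the (equivariant) derived category.
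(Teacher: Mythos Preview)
Your proof is correct and follows essentially the same route as the paper's: reduce to genuine chain maps, expand $(f+g)^{\boxtimes p}$ multilinearly over functions $\epsilon:[p]\to\{f,g\}$, strip off the two constant terms, and use freeness of the $\mup$-action on the remaining $2^p-2$ terms to choose orbit representatives whose sum is the desired $h$. Your treatment is slightly more explicit than the paper's in two places---the calculus-of-fractions reduction to a common domain, and the Koszul sign cancellation in the conjugation identity $\sigma^k\epsilon^{\boxtimes p}\sigma^{-k}=(\sigma^k\!\cdot\!\epsilon)^{\boxtimes p}$---but these are exactly the details the paper glosses over, not a different argument.
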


\begin{proof}Let us right away replace $A^\bullet,B^\bullet$ by isomorphic objects so that $f,g$ become genuine maps of complexes. Let $\underline{f},\underline{g}$ denote the constant functions $\mup\to\{f,g\}$ with respective values $f,g$. Then $\mup$ acts freely on $\{f,g\}^{\mup}-\{\underline{f},\underline{g}\}$; choose a set $\{h_1,\ldots,h_n\}$ of orbit representatives ($n=(2^p-2)/p$). Then each $h_i$ determines a non-equivariant map $(A^\bullet)^{\boxtimes\mup}\to (B^\bullet)^{\boxtimes\mup}$, hence so does their sum $h$. Then, we have
$$
(f+g)^{\boxtimes\mup}-f^{\boxtimes\mup}-g^{\boxtimes\mup}=\sum_{x\in\mup} xhx^{-1}
$$
where, by definition $xhx^{-1}$ is the composition:
$$
xhx^{-1} : (A^\bullet)^{\boxtimes\mup}\cong x^*(A^\bullet)^{\boxtimes\mup}\xrightarrow{x^*(h)} x^*(B^\bullet)^{\boxtimes\mup}\cong (B^\bullet)^{\boxtimes\mup}
$$
where the two isomorphisms are given by the equivariant structures.\end{proof}

\begin{prop}\label{Frobmult}$St_D$ is Frobenius-multiplicative with respect to the action of the multiplicative monoid $R$ on hom-sets. That is, $St_D$ determines a functor
$$
St_D:\Ind_R^RD^b(X,R)\to D^b_{\mup}(X^{\mup},R)
$$
which respects multiplication by $R$. Here the category on the left is obtained from $D^b(X,R)$ by regarding each hom-set as a set with an action of the multiplicative monoid $R$ and inducing along the $p^{th}$-power map of monoids $R\to R$.\end{prop}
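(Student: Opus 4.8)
The plan is to show that for any scalar $r \in R$ and any morphism $f : A^\bullet \to B^\bullet$ in $D^b(X,R)$, one has $St_D(rf) = r^p \cdot St_D(f)$, where on the left $rf$ denotes the result of scaling $f$ by $r$ and on the right $r^p \cdot$ denotes the action of $r^p$ on the hom-set of the equivariant category. Granting this, the assertion about the induced functor $St_D : \Ind_R^R D^b(X,R) \to D^b_{\mup}(X^{\mup},R)$ is a formal consequence: the source category has the same objects as $D^b(X,R)$, and its hom-sets are the quotients of $\Hom(A^\bullet, B^\bullet) \times R^{\times}$-type induction along $r \mapsto r^p$ — concretely, $St_D$ is constant on the fibers of the $p$-th power map applied to scalars, which is exactly the content of $St_D(rf) = r^p St_D(f)$ together with the already-established functoriality of $St_D$ on honest morphisms. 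So the real work is the scalar identity.

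First I would, as in the proof of Proposition \ref{induced}, replace $A^\bullet, B^\bullet$ by isomorphic objects so that $f$ is a genuine chain map; then $rf$ is also a genuine chain map, being $f$ post-composed with multiplication by $r$ on $B^\bullet$ (which is a chain map since $R$ is commutative and acts by sheaf endomorphisms commuting with the differential). Now compute at the cochain level: $(rf)^{\boxtimes p}$ is, summand by summand on $A^{i_1} \boxtimes \cdots \boxtimes A^{i_p}$, the map $(rf)^{i_1} \boxtimes \cdots \boxtimes (rf)^{i_p} = (r^p) \cdot (f^{i_1} \boxtimes \cdots \boxtimes f^{i_p})$, since scalars from the $R$-module structure pull out of each external tensor factor and there are $p$ of them. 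Thus $(rf)^{\boxtimes p} = r^p \cdot f^{\boxtimes p}$ as maps of complexes of sheaves on $X^{\mup}$. The key point to check is that multiplication by $r^p$ here is $\mup$-\emph{equivariant} for the $St_C$-structure, and that it agrees with the $r^p$-action on the hom-set of $C^b_{\mup}(X^{\mup},R)$: but $r^p$ is a central scalar, its action on $A^{i_1} \boxtimes \cdots \boxtimes A^{i_p}$ commutes with the canonical permutation isomorphisms (which are built from associativity/commutativity of $\boxtimes$ and signs, all $R$-linear), so $r^p \cdot f^{\boxtimes p}$ is $Av$ of nothing exotic — it is literally the equivariant map $r^p \cdot St_C(f)$. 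Passing to $D^b_{\mup}(X^{\mup},R)$ gives $St_D(rf) = r^p \cdot St_D(f)$.

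The main obstacle I anticipate is bookkeeping rather than mathematics: one must be careful that the Koszul sign twists $(-1)^{i_1(n-i_1)}$ entering the $\mup$-equivariant structure are untouched by scaling (they are, since $r$ is a degree-zero scalar), and one must phrase the statement about the induced category $\Ind_R^R D^b(X,R)$ precisely — in particular that induction along the $p$-th power map $R \to R$ of multiplicative monoids does yield a well-defined category, which requires the $p$-th power map to be a monoid homomorphism (true since $R$ is commutative), and that $St_D$ descends to it precisely because of the identity just proved. No genuinely hard step arises; the proposition is a compatibility statement, and the substance is the elementary observation that an external tensor power over a set of size $p$ turns scaling by $r$ into scaling by $r^p$.
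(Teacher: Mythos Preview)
Your argument is correct. The paper does not actually supply a proof of this proposition; it is stated and then the text moves on, treating it as immediate from the construction of $St_C$. Your write-up makes explicit exactly the computation the paper is tacitly relying on: at the cochain level $(rf)^{\boxtimes p}=r^p\cdot f^{\boxtimes p}$ because each of the $p$ external tensor factors contributes one copy of $r$, and multiplication by the central scalar $r^p$ is compatible with the Koszul-twisted $\mup$-equivariant structure (the sign twists are $R$-linear, degree-zero operations). That is all there is to it, and your remark that the remaining work is bookkeeping rather than mathematics is accurate.

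One small point of phrasing: your description of the hom-sets of $\Ind_R^R D^b(X,R)$ as ``quotients of $\Hom(A^\bullet,B^\bullet)\times R^\times$-type induction'' is a bit garbled. The induced category has the same underlying hom-\emph{sets} as $D^b(X,R)$; what changes is the action of the multiplicative monoid $R$, which is now precomposed with the $p^{th}$-power map. So the claim that $St_D$ factors through $\Ind_R^R D^b(X,R)$ and respects the $R$-action is literally the identity $St_D(rf)=r^p St_D(f)$, with no further quotient or descent step needed. Otherwise your proposal is exactly the intended argument.
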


Note that $\Ind_R^RD^b(X,R)$ is not an additive category in general. However, suppose that $R=k$ and $k$ is perfect. In that case, the Frobenius map of monoids is actually a map of rings $F$, and is moreover bijective. Write $M:k\mmod\to k\mset$ for the forgetful functor, where $k\mset$ denotes the category\footnote{The reader may prefer to replace this by its full subcategory of all $k^\times$-sets with a unique stable point.} of sets with action of the multiplicative monoid $k$. We have the following:
\begin{lem}\label{perf}Suppose that $k$ is a perfect field of characteristic $p$. Then we have 
$$
M\circ F^*\cong \Ind_k^k\circ M.
$$
\end{lem}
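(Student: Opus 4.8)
The plan is to make all three functors explicit on objects and then write down a natural isomorphism by hand. First I would pin down $F^*$: since $k$ is perfect the Frobenius $F\colon k\to k$ is a ring isomorphism, so $F^*$ is extension of scalars along $F$, $F^*V = k\otimes_{k,F}V$, and because $k$ is free of rank one (on $1$) as a right $k$-module via $F$, the map $v\mapsto 1\otimes v$ identifies the underlying \emph{set} of $V$ with that of $F^*V$. Under this identification the multiplicative monoid $k$ acts on $F^*V$ by $c\cdot v = c^{1/p}v$, where $c^{1/p}$ denotes the unique $p$-th root of $c$ in $k$ (which exists by perfectness). On the other side, $\Ind_k^k$ --- the left adjoint to restriction of $(k,\cdot)$-sets along $(-)^p\colon(k,\cdot)\to(k,\cdot)$ --- is given by $\Ind_k^k S = (k\times S)/{\sim}$ with $(ab^p,s)\sim(a,bs)$ and residual monoid action $c\cdot[a,s]=[ca,s]$.

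Then I would propose the natural transformation $\Psi_V\colon M(F^*V)\to\Ind_k^k M(V)$, $v\mapsto[1,v]$, with candidate inverse $\Phi_V\colon[a,v]\mapsto a^{1/p}v$ (scalar multiplication taken in $V$). Everything remaining is short bookkeeping: $\Phi_V$ is well defined since $(ab^p)^{1/p}v = a^{1/p}(bv)$; $\Phi_V$ respects the monoid action since $(ca)^{1/p} = c^{1/p}a^{1/p}$, which is precisely how $c$ acts on $F^*V$; $\Psi_V$ respects it since $[1,c^{1/p}v]=[(c^{1/p})^p,v]=[c,v]=c\cdot[1,v]$ by the defining relation; $\Phi_V\Psi_V=\mathrm{id}$ and $\Psi_V\Phi_V[a,v]=[1,a^{1/p}v]=[a,v]$, again by the relation; and naturality in $V$ is immediate because, under the identification above, both $M(F^*g)$ and $\Ind_k^k M(g)$ are ``apply $g$ to $v$''.

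I do not expect a genuine obstacle here; the one thing I would take care to state clearly is exactly where perfectness enters --- surjectivity of $F$ to identify $F^*V$ with $V$ on the level of sets, and, more essentially, the existence and uniqueness of $p$-th roots so that $\Phi_V$ is defined and inverts $\Psi_V$ (without it, $\Ind_k^k M(V)$ is strictly bigger than $M(F^*V)$). I would also flag the convention: here $F^*$ must be read as the pullback/extension-of-scalars functor, not restriction of scalars along $F$, since the latter would give the monoid action $c\cdot v = c^p v$ and the two sides would no longer match.
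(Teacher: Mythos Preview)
Your proposal is correct and follows essentially the same idea as the paper's proof, just spelled out in much greater detail. The paper's argument is a single sentence: since $F$ is bijective, both $F^*$ and $\Ind_k^k$ are naturally equivalent to functors which leave the underlying abelian group (respectively set) unchanged and only alter the $k$-action, so applying $M$ intertwines them. Your explicit description of $\Ind_k^k S$ as $(k\times S)/{\sim}$ and the maps $\Psi_V,\Phi_V$ simply make this identification concrete; your flag about the convention for $F^*$ (extension rather than restriction of scalars) is the right one and consistent with the paper's usage.
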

\begin{proof}Indeed, in that case both $F^*$ and $\Ind_k^k$ are equivalent to functors which do not change the underlying abelian group/set, and only change the way that $k$ acts.\end{proof}

It follows that if $k$ is a perfect field of characteristic $p$, we have produced a $k$-multiplicative functor
$$
St_D:F^*D^b(X,k)\to D^b_{\mup}(X^{\mup},k).
$$
Since $F^*D^b(X,k)$ is triangulated, we find this statement somewhat nicer that the version for general $R$.

\subsection{Localization}The category $D^b_{\mup}(X^{\mup},R)$ is enriched, in a triangulated sense, over $H^*_{\mup}(X^{\mup},R)$. That is, the monoidal structure of $D^b_{\mup}(X^{\mup},R)$ gives maps of $R$-modules
$$
H^n_{\mup}(X^{\mup},R)\cong \Hom_{D^b_{\mup}(X^{\mup},R)}(R,\Sigma^nR)\to \Hom_{D^b_{\mup}(X^{\mup},R)}(Id,\Sigma^n)
$$
for each $n\geq0$, whose sum is a map of algebras. In particular, $D^b_{\mup}(X^{\mup},R)$ is enriched in the same sense over 
$$
H^*_{\mup}(*,R)\cong H^*(\bmp,R)\cong R\otimes_{\bZ}^L (\bZ[\hbar]/p\hbar).
$$
Here $\hbar$ is the first Chern class of the tautological line bundle on $\bmp$ corresponding to the embedding $\mup\to \bC^*$. In particular this super-commutative ring receives a map from $R\otimes_{\bZ}^L\bZ[\hbar] = R[\hbar]$ so that $D^b_{\mup}(X^{\mup},R)$ is enriched over $R[\hbar]$. Thus we may consider the 2-periodic $R$-linear triangulated category $D^b_{\mup}(X^{\mup},R)[\hbar^{-1}]$, which is enriched over
$$
R\otimes_{\bZ}^L (\bZ[\hbar]/p\hbar)[\hbar^{-1}]\cong R\otimes_{\bZ}^L \bfp[\hbar^{\pm1}].
$$
The degree $0$ component of this ring is $R/p$, and the natural map from $R$ to here is the modular reduction map. In particular, there is a Frobenius map of rings
$$
F: R\to R\otimes_{\bZ}^L \bfp[\hbar^{\pm1}]
$$
In this way, it makes sense to ask whether a functor from a triangulated category enriched over $R$ to one enriched over $R\otimes_{\bZ}^L \bfp[\hbar^{\pm1}]$ is Frobenius-linear.

\begin{prop}\label{Tate}The composition
$$
St_D':D^b(X,R)\xrightarrow{St_D}D^b_{\mup}(X^{\mup},R)\to D^b_{\mup}(X^{\mup},R)[\hbar^{-1}]
$$
is exact, Frobenius-linear and preserves direct sums.\end{prop}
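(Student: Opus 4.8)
\emph{Plan.} The strategy is to leverage Propositions \ref{induced} and \ref{Frobmult} together with one organizing principle: \emph{anything induced from the trivial subgroup of $\mup$ vanishes after inverting $\hbar$}. First I would record this in two forms. (i) If $N$ is a non-equivariant object of $D^b(X^{\mup},R)$ and $\Ind N\in D^b_{\mup}(X^{\mup},R)$ is its induction, then the $H^*(\bmp,R)$-action on $\Ind N$, and on any $\Hom$-group into or out of it, factors through the restriction map $H^*(\bmp,R)\to H^*(\mathrm{pt},R)=R$ (adjunction $\Ind\dashv\Res$ identifies $\Hom_{D^b_{\mup}}(\Ind N_1,\Ind N_2)$ with $\Hom_{D^b}(N_1,\Res\,\Ind N_2)$, on which $H^*(\bmp,R)$ acts through $\Res$), and this restriction kills $\hbar$ because $\hbar$ sits in positive cohomological degree while $H^*(\mathrm{pt},R)=R$ is concentrated in degree $0$. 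Hence $\hbar$ acts by $0$ on $\Ind N$, so $\Ind N\cong 0$ in $D^b_{\mup}(X^{\mup},R)[\hbar^{-1}]$. (ii) Consequently any morphism of the shape $Av(h)=\sum_{x\in\mup}xhx^{-1}$ with $h$ non-equivariant is zero in the localized category: by the standard averaging identity --- available because $\Ind=\mathrm{Coind}$ for the finite group $\mup$, so the units and counits of both adjunctions exist --- $Av(h)$ factors as $M\to \Ind\,\Res M\xrightarrow{\Ind(h)}\Ind\,\Res M'\to M'$, hence through an induced object.

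Granting this, Frobenius-linearity is immediate. By Proposition \ref{induced}, $St_D(f+g)-St_D(f)-St_D(g)=Av(h)$ for some non-equivariant $h$, so it dies after inverting $\hbar$; together with $St_D(0)=0$ (the external tensor power of the zero object) this yields $St_D'(f+g)=St_D'(f)+St_D'(g)$ and $St_D'(-f)=-St_D'(f)$. By Proposition \ref{Frobmult}, unwinding the induced $R$-set structure gives $St_D(rf)=r^p\cdot St_D(f)$ for $r\in R$; and since $p\hbar=0$ in $H^*(\bmp,R)$, inverting $\hbar$ forces $p=0$, so multiplication by $r^p$ on the relevant $\Hom$-groups coincides with multiplication by the image $F(r)$ of $r$ under the Frobenius map $R\to R\otimes_{\bZ}^L\bfp[\hbar^{\pm1}]$. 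Thus $St_D'(rf)=F(r)\cdot St_D'(f)$, i.e. $St_D'$ is Frobenius-linear.

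For exactness I would first observe $St_D'\Sigma\cong\Sigma^p St_D'\cong\Sigma St_D'$: the first isomorphism descends from $St_D\Sigma\cong\Sigma^p St_D$, and the second uses that multiplication by the invertible class $\hbar\in H^2$ makes $D^b_{\mup}(X^{\mup},R)[\hbar^{-1}]$ $2$-periodic while $p$ is odd. Then, given a distinguished triangle $A\xrightarrow{f}B\to\cone(f)\to\Sigma A$, I would filter $\cone(f)^{\boxtimes\mup}$ by the number of tensor factors coming from $\Sigma A$. The group $\mup$ permutes the summands indexed by subsets of $[p]$; the two fixed subsets $\varnothing$ and $[p]$ contribute the quotient $B^{\boxtimes\mup}=St_D(B)$ and the subobject $(\Sigma A)^{\boxtimes\mup}=St_D(\Sigma A)$, and every other $\mup$-orbit is free, contributing an induced subquotient. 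Passing to $D^b_{\mup}(X^{\mup},R)[\hbar^{-1}]$ the induced subquotients vanish, leaving a distinguished triangle $St_D(\Sigma A)\to St_D(\cone(f))\to St_D(B)\to$, whose connecting map I would identify --- by inspecting the residual differential, which is assembled from $f^{\boxtimes\mup}$ --- with a rotation of $St_D'(f)$; rotating then shows $St_D'$ carries the given triangle to a distinguished one (up to sign, which suffices for exactness). Preservation of direct sums is the same argument applied to $(A\oplus B)^{\boxtimes\mup}$, which is now a genuine direct sum over subsets of $[p]$ with no cross-differential: the $\mup$-fixed summands give $St_D'(A)\oplus St_D'(B)$ and the remaining orbits are induced, so $St_D'(A\oplus B)\cong St_D'(A)\oplus St_D'(B)$.

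The hard part will be the exactness step --- specifically, checking that the filtration of $\cone(f)^{\boxtimes\mup}$ is by $\mup$-stable subcomplexes whose non-extreme associated-graded pieces are induced \emph{as $\mup$-equivariant objects} (not merely non-equivariantly), and then verifying that the surviving residual differential between the two extreme pieces really is the image of $St_D'(f)$ rather than some other power operation. A lesser, but still genuine, point is extracting the clean identity $St_D(rf)=r^p St_D(f)$ from the bookkeeping of the induced $R$-set structure in Proposition \ref{Frobmult}, and matching ``$p=0$ after inverting $\hbar$'' with the precise Frobenius map $F$ fixed earlier in this subsection.
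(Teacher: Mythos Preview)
Your treatment of direct sums and Frobenius-linearity is correct and is exactly the paper's argument: induced objects and induced morphisms die after inverting $\hbar$ (since $\Res^{\mup}_1(\hbar)=0$), and Propositions~\ref{induced} and~\ref{Frobmult} then finish the job.

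For exactness your outline is also the paper's --- filter $St_C(\cone(f))$ by the number of $A$-factors, kill the induced middle layers, and identify what remains --- but there is one slip and one genuine gap. The slip: in the cone $C^n=A^{n+1}\oplus B^n$ with differential $\left(\begin{smallmatrix}-d&0\\f&d\end{smallmatrix}\right)$, the $B$-part is the \emph{sub}complex and $\Sigma A$ the \emph{quotient}, so the surviving triangle is $St_D'(B)\to St_D'(\cone(f))\to St_D'(\Sigma A)\to\Sigma St_D'(B)$, not what you wrote.

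The gap is precisely the step you yourself flag as ``the hard part'': identifying the boundary map of this two-step filtration with $St_D'(f)$ under your chosen isomorphism $e:\Sigma St_D'\cong St_D'\Sigma$. This is a real computation, not an inspection. The connecting map is not a piece of the differential but a zig-zag through all $p-1$ killed layers, and you must show it equals $e^{-1}\circ(-\Sigma^p St_D'(f))$ on the nose. The paper handles this by reducing to the universal case (replacing $f$ by $id_A$, i.e.\ working with the cocone $D^\bullet$ of $id_A$), then building an explicit equivariant chain map $\zeta:\liep\otimes\Sigma St_C(A)\to St_C(D)/St_C(A)$ from the standard length-$p$ resolution $\liep$ of the trivial $R[\mup]$-module, and computing directly that the relevant composite equals $-((p-1)/2)!\,\hbar^{(p-1)/2}$. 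That is why the paper's exact structure $e$ carries the factor $((p-1)/2)!$: it is forced by this calculation, not a free choice. Your $e$ (plain $\hbar^{(p-1)/2}$) would leave this unit unaccounted for; it can be absorbed after the fact since $((p-1)/2)!$ is invertible in the localized category, but you cannot know what scalar appears --- or even that it is a scalar rather than, say, zero --- without actually doing the chain-level computation.
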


\begin{proof}First we prove that $St_D'$ preserves direct sums. Let $A^\bullet$, $B^\bullet$ be complexes in $D^b(X,R)$. We argue as in the proof of Proposition \ref{induced} that we have
$$
St_D(A^\bullet\oplus B^\bullet)\cong St_D(A^\bullet)\oplus St_D(B^\bullet)\oplus \Ind_1^{\mup}C^\bullet
$$
for some complex $C^\bullet$ in $D^b(X^{\mup},R)$. Here $\Ind_1^{\mup}$ is the averaging functor
$$
\Ind_1^{\mup}: D^b(X^{\mup},R)\to D^b_{\mup}(X^{\mup},R)
$$
bi-adjoint to the restriction functor $\Res_1^{\mup}$. Therefore, it suffices to prove that the composition
$$
D^b(X^{\mup},R)\xrightarrow{\Ind_1^{\mup}} D^b_{\mup}(X^{\mup},R)\to D^b_{\mup}(X^{\mup},R)[\hbar^{-1}]
$$
is isomorphic to $0$. This follows by adjunction from the fact that $Res_1^{\mup}(\hbar)=0$.\

Next we prove Frobenius-linearity. By Proposition \ref{Frobmult}, it suffices to prove that $St_D'$ respects addition of parallel morphisms. By Proposition \ref{induced}, it is enough to see that the image of an induced morphism in $D^b_{\mup}(X^{\mup},R)$ in the localized category $D^b_{\mup}(X^{\mup},R)[\hbar^{-1}]$ is $0$. This we have shown in the previous paragraph.\

Finally, we prove exactness. First we must specify an exact structure, i.e. an isomorphism $e:\Sigma St_D'\cong St_D'\Sigma$, which makes the image under $St_D'$ of any triangle a triangle. Note that since $p$ is odd, there is a morphism in $D^b_{\mup}(X^{\mup},R)$ of functors $\hbar^{(p-1)/2}:\Sigma\xrightarrow{}\Sigma^p$ which becomes an isomorphism when $\hbar$ is inverted. Already on the level of complexes we have a canonical isomorphism $\Sigma^pSt_C\cong St_C\Sigma$. The exact structure is taken to be the composition
$$
e:\Sigma St_D'\xrightarrow{((p-1)/2)!\hbar^{(p-1)/2}}\Sigma ^pSt_D'\cong St_D'\Sigma.
$$
This is indeed an isomorphism since the localized category is enriched over $\bfp$. The reason for the factor $((p-1)/2)!$ will be explained shortly. Thus, given a triangle\footnote{We have immediately rotated the arbitrary exact triangle $A^\bullet\xrightarrow{f}B^\bullet\xrightarrow{g}C^\bullet\xrightarrow{h}A^\bullet$ for convenience later.}
$$
B^\bullet\xrightarrow{g}C^\bullet\xrightarrow{h}\Sigma A^\bullet\xrightarrow{-\Sigma f}\Sigma B^\bullet
$$
in $D^b(X,R)$, we have a triangle
$$
\begin{matrix} St_D'(B^\bullet) & \xrightarrow{St_D'(g)} & St_D'(C^\bullet) & \xrightarrow{St_D'(h)} & St_D'(\Sigma A^\bullet) & \xrightarrow{St_D'(-\Sigma f)=-St_D'(\Sigma f)} & St_D'(\Sigma B^\bullet)\\
&&&&&& \Big\downarrow{\vcong~e^{-1}}\\
&&&&&& \Sigma St_D'(B^\bullet)\\
%&&&&&& \Big\downarrow{\vcong}\\
%&&&&&& St_D'(B^\bullet)[p]\\
%&&&&&& \hfill\Big\downarrow{{\frac{p-1}{2}\hbar^{(1-p)/2}}}\\
%&&&&&& St_D'(B^\bullet)[1]
\end{matrix}
$$
in $D^b_{\mup}(X^{\mup},R)[\hbar^{-1}]$. We must show that this is exact whenever the original triangle is. Since any exact triangle in a derived category is isomorphic to a semi-split one, we may assume that $f$ is a chain map and $C^\bullet$ is the usual mapping cone satisfying
$$
C^n = A^{n+1}\oplus B^n
$$
with differential
$$
\begin{pmatrix}
-d & 0\\ 
f & d
\end{pmatrix}.
$$
So we have
$$
St_C(C^\bullet) = \left(\bigoplus_{i_1+\ldots+ i_p=\bullet}(A^{i_1+1}\oplus B^{i_1})\boxtimes\ldots\boxtimes(A^{i_p+1}\oplus B^{i_p}) \right)^\bullet
$$
with some differential\footnote{Which the reader may write down if desired. Its exact form is not important.}. This complex has a $(p+1)$-step equivariant increasing filtration
$$
St_C(B^\bullet) = F_0St_C(C^\bullet)\subset\ldots\subset F_pSt_C(C^\bullet) = St_C(C^\bullet)
$$
where $F_iSt_C(C^\bullet)$ consists of the subcomplex of $St_C(C^\bullet)$ in which at most $i$ summands $A^?$ are taken in the expansion of the external tensor product. The inclusion of the zeroth piece of this filtration is equal to $St_C(g)$, while the quotient map
$$
St_C(C^\bullet)\twoheadrightarrow St_C(C^\bullet)/F_{p-1}St_C(C^\bullet) \cong St_C(\Sigma A^\bullet)
$$
is equal to $St_C(h)$. Furthermore, arguing as in Proposition \ref{induced}, we see that $F_iSt_C(C^\bullet)/F_{i-1}St_C(C^\bullet)$ is an induced complex for each $1\leq i\leq p-1$. Therefore, the map
$$
St_C(C^\bullet)/St_C(B^\bullet)\twoheadrightarrow St_C(\Sigma A^\bullet)
$$
becomes an isomorphism in $D^b_{\mup}(X^{\mup},R)[\hbar^{-1}]$. Consider now the commutative diagram:
$$
\begin{matrix} St_C(B^\bullet) & \xrightarrow{St_C(g)} & St_C(C^\bullet) & \xrightarrow{~~~\alpha~~~} & St_C(C^\bullet)/St_C(B^\bullet) & \xrightarrow{~~~~~~~~~~~\beta~~~~~~~~~~~} & \Sigma St_C(B^\bullet)\\
 \Big\downarrow{\veq} &  & \Big\downarrow{\veq} &  & \Big\downarrow{\gamma} &  & \\%\Big\downarrow{\hbar^{(p-1)/2}} \\
St_C(B^\bullet) & \xrightarrow{St_C(g)} & St_C(C^\bullet) & \xrightarrow{St_C(h)} & \Sigma^pSt_C( A^\bullet) & \xrightarrow{~~~~~~-\Sigma^pSt_C( f)~~~~~~} & \Sigma^pSt_C(B^\bullet)\\
\end{matrix}
$$
of equivariant chain complexes. Here $\alpha,\beta$ are the usual chain maps which make the image of the top row in $D^b_{\mup}(X,R)$ an exact triangle, $\gamma$ is the quotient map, whose image in $D^b_{\mup}(X,R)[\hbar^{-1}]$ is an isomorphism, and $St_C\Sigma$ has been identified with $\Sigma^pSt_C$. Comparing with the definition of the triangle obtained by applying $St_D'$ to $B^\bullet\xrightarrow{g}C^\bullet\xrightarrow{h}\Sigma A^\bullet\xrightarrow{-\Sigma f}\Sigma B^\bullet$, we see that it is enough to prove that the diagram
$$
\begin{matrix} St_D(C^\bullet)/St_D(B^\bullet) & \xrightarrow{~~~~~~~~~~~\beta~~~~~~~~~~~} & \Sigma St_D(B^\bullet)\hfill\\
\Big\downarrow{\gamma} &  & \Big\downarrow{{((p-1)/2)!\hbar^{(p-1)/2}}} \\
\Sigma^pSt_D(A^\bullet) & \xrightarrow{~~~~~~-\Sigma^pSt_D(f)~~~~~~} & \Sigma^pSt_D(B^\bullet)\hfill\\
\end{matrix}
$$
commutes in $D^b_{\mup}(X,R)[\hbar^{-1}]$. Here we have written $\beta,\gamma$ for their own images in $D^b_{\mup}(X,R)$. Let $D^\bullet$ be the cocone of $A^\bullet\xrightarrow{id}A^\bullet$, so that $f$ induces a map $D^\bullet\to C^\bullet$. We have a commutative diagram
$$
\begin{matrix}
St_C(D^\bullet)/St_C(A^\bullet) & \xrightarrow{\epsilon} & \Sigma St_C(A^\bullet)\\
\Big\downarrow{\delta} &  & \Big\downarrow{\Sigma St_C(f)} \\
St_C(C^\bullet)/St_C(B^\bullet) & \xrightarrow{\beta} & \Sigma St_C(B^\bullet)\\
\end{matrix}
$$
where $\epsilon$ is the standard boundary map, and the vertical arrows are induced\footnote{Without additional signs - the sign changes in the horizontal rows are complementary.} by $f$. Now $\gamma\delta$ becomes an isomorphism in $D^b_{\mup}(X^{\mup},R)$ for the same reason that $\gamma$ does; therefore so does $\delta$. So it is enough to show that the composition of these two diagrams is commutative. By functoriality of $\hbar$, the resulting composition equals
$$
\begin{matrix}
St_D(D^\bullet)/St_D(A^\bullet) & \xrightarrow{\epsilon} & \Sigma St_D(A^\bullet)\hfill\\
\Big\downarrow{\gamma\delta} &  & \Big\downarrow{\Sigma^p St_D(f)\circ{((p-1)/2)!\hbar^{(p-1)/2}}} \\
\Sigma^pSt_D(A^\bullet) & \xrightarrow{~~~~~~-\Sigma^pSt_D(f)~~~~~~} & \Sigma^pSt_D(B^\bullet)\hfill\\
\end{matrix}
$$
Let
$$
\liep = (R\to R[\mup]\xrightarrow{\sigma-1}R[\mup]\xrightarrow{N}\ldots\xrightarrow{\sigma-1}R[\mup])
$$
be the equivariant resolution of the trivial $R[\mup]$-module supported in degrees $(1-p),\ldots,0$. We have the standard chain maps $\liep\to R$, which is an isomorphism in the equivariant derived category, and $\liep\to \Sigma^{p-1}R$, which equals $\hbar^{(p-1)/2}$ by definition. Now $\epsilon$ is a chain map, which is an isomorphism in the equivariant derived category but not in the equivariant complex category. However, there \emph{is} a chain map
$$
\liep\otimes\Sigma St_C(A^\bullet)\xrightarrow{\zeta} St_C(D^\bullet)/St_C(A^\bullet)
$$
such that $\epsilon\zeta$ is induced by the standard chain map $\liep\to R$ (i.e. counit in degree $0$). To see this, it is enough to do the case $A^\bullet=R$ and then tensor on the right with $St_C(A^\bullet)$. In that case, we are looking for an equivariant chain map from the complex
$$
R\to R[\mup]\xrightarrow{\sigma-1}R[\mup]\xrightarrow{N}\ldots\xrightarrow{\sigma-1}R[\mup]
$$
supported in degrees $-p,\ldots,-1$ to the complex $E^\bullet$ satisfying
$$
E^i=\bigoplus_{\substack{S\subset [p]\\ |S|=-i}}R_S
$$
where $R_S$ is a copy of $R$, and with differential sending $R_S$ to $\bigoplus_{s\in S}R_{S-\{s\}}$ by $(1,-1,1,\ldots)$. Let us write $1_S$ for the canonical generator of $R_S$. One example of such a map is the map which sends the element $1$ in the degree $-(2i+1)$ copy of $R[\mup]$ to the term:
$$
-i!\sum_{\substack{T\subset \{2,\ldots,p\}\\ |T|=2i\\ \text{even block lengths}}}1_{\{1\}\cup T}
$$
and sends the element $1$ in the degree $-(2i+2)$ copy of $R[\mup]$ to the term:
$$
-i!\sum_{\substack{T\subset \{3,\ldots,p\}\\ |T|=2i\\ \text{even block lengths}}}1_{\{1\}\cup\{2\}\cup T}.
$$
The sign is chosen so that $\epsilon\zeta$ is induced by the standard chain map $\liep\to R$. We compute:
$$
\gamma\delta\zeta = -((p-1)/2)!\hbar^{(p-1)/2}.
$$
Therefore, the two paths $St_D(D^\bullet)/St_D(A^\bullet)\to \Sigma^pSt_D(B^\bullet)$ in this composed diagram are equalized by $\zeta$. Since $\zeta$ is an isomorphism in $D^b_{\mup}(X^{\mup},R)[\hbar^{-1}]$, they coincide in the localized category as required.
\end{proof}

\begin{cor}Suppose $R=k$ is a field of characteristic $p$. Then we have a triangulated $k$-linear functor
$$
St_D':F^*D^b(X,k)\to D^b_{\mup}(X^{\mup},R)[\hbar^{-1}].
$$
\end{cor}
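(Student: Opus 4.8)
The plan is to obtain the statement almost formally from Proposition \ref{Tate} in the special case $R=k$. First I would record the simplifications available here: since $k$ already has characteristic $p$, the target $D^b_{\mup}(X^{\mup},k)[\hbar^{-1}]$ is enriched over $k\otimes_{\bZ}^L\bfp[\hbar^{\pm1}]\cong k[\hbar^{\pm1}]$, and the ``Frobenius map of rings'' discussed above is simply the ordinary Frobenius $F\colon k\to k$ followed by the inclusion $k\subset k[\hbar^{\pm1}]$. I would also make explicit what $F^*D^b(X,k)$ should mean: the extension of scalars of $D^b(X,k)$ along $F$, i.e.\ the category with the same objects and with $\Hom$-groups base-changed along $F$ (so that for perfect $k$ it coincides with restriction of scalars along $F^{-1}$). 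This is a $k$-linear triangulated category with shift $\Sigma$, whose distinguished triangles are those isomorphic to images of distinguished triangles of $D^b(X,k)$; it makes sense for an arbitrary field, and for perfect $k$ the description is the one forced by Lemma \ref{perf}.

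Next I would construct the functor: it is $A^\bullet\mapsto St_D'(A^\bullet)$ on objects, and on a morphism written as $\lambda\otimes f$ in the base-changed $\Hom$-group it is $\lambda\otimes f\mapsto\lambda\cdot St_D'(f)$. The essential point is well-definedness. In the extension of scalars one has $(\mu\nu^p)\otimes f=\mu\otimes(\nu f)$ for $\mu,\nu\in k$, as well as $\lambda\otimes(f+f')=\lambda\otimes f+\lambda\otimes f'$; the assignment respects the first relation because $St_D'(\nu f)=\nu^p St_D'(f)$ and the second because $St_D'$ is additive — both being the Frobenius-linearity clause of Proposition \ref{Tate}. $k$-linearity is then immediate from the formula, and compatibility with composition and identities is inherited from the fact that $St_D$, and hence $St_D'$, is already a functor on $D^b(X,k)$ (a non-additive one); in particular it sends isomorphisms to isomorphisms. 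For the triangulated structure I would transport the natural isomorphism $e\colon\Sigma St_D'\xrightarrow{\sim}St_D'\Sigma$ constructed inside the proof of Proposition \ref{Tate}; it is visibly $k$-linear and, the shift being unchanged, makes sense verbatim on $F^*D^b(X,k)$. Exactness with respect to $e$ is then precisely what Proposition \ref{Tate} supplies, applied to a distinguished triangle of $F^*D^b(X,k)$ after identifying it with the image of one from $D^b(X,k)$; preservation of direct sums is inherited directly, since biproducts in $F^*D^b(X,k)$ are those of $D^b(X,k)$.

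I do not expect a serious obstacle: the real work — the vanishing of induced morphisms after inverting $\hbar$, and the delicate chain-level construction of the map $\zeta$ that forces exactness — has already been done in Proposition \ref{Tate}. What genuinely needs care is bookkeeping: fixing the direction of the Frobenius twist so that $F^*D^b(X,k)$ is read as a base change (which works over any field, and is what makes $\lambda\otimes f\mapsto\lambda\cdot St_D'(f)$ land $k$-linearly) rather than as restriction along $F$ itself, which would not; and confirming that $e$ descends compatibly to the base change. It is worth flagging the conceptual point: inverting $\hbar$ has upgraded the merely $k$-\emph{multiplicative} functor $St_D$ of the previous subsection to an \emph{additive}, hence genuinely $k$-linear and triangulated, functor, which is exactly why the perfectness hypothesis of Lemma \ref{perf} is no longer needed.
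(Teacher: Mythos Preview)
Your proposal is correct and takes essentially the same approach as the paper, which states the corollary without proof as an immediate consequence of Proposition~\ref{Tate}. You actually supply more detail than the paper does---in particular the explicit description of $F^*D^b(X,k)$ as extension of scalars and the verification that $\lambda\otimes f\mapsto \lambda\cdot St_D'(f)$ is well-defined and $k$-linear---and your observation that inverting $\hbar$ removes the need for the perfectness hypothesis of Lemma~\ref{perf} is exactly the point.
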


\subsection{Six functors}\label{six}We are mainly concerned with the case where $X$ is the (Borel) quotient $EG\frac{\times}{G}Y$ of a complex algebraic variety $Y$ by the action of some affine algebraic group $G$, and $R$ is a Noetherian ring of finite homological dimension. In this case we will replace the category $D^b$ with its constructible analogue $D^b_c$. That is, any $G$-equivariant constructible sheaf on $Y$ descends to a sheaf on $X$, and $D^b_c(X,R)$ is the thick subcategory of $D^b(X,R)$ generated by all such sheaves. We will usually write $D^b_{c,G}(Y,R)$ instead of $D^b_c(X,R)$. 

The constructions of the previous section preserve constructibility, so we have a Steenrod construction
$$
St_D:D^b_{c,G}(Y,R)\to D^b_{c,G^{\mup}\rtimes\mup}(Y^{\mup},R).
$$
Recall that we have the six functor formalism for constructible derived categories. We assume that the reader is familiar with this material, but remind him/her of the standard notation: for a $G$-equivariant algebraic map $f:Y\to Y'$, we have the adjoint pairs of exact functors
$$
{f^*\negmedspace: D^b_{c,G}(Y',R)\rightleftarrows D^b_{c,G}(Y,R):\negmedspace f_*}
$$
$$
{f_!\negmedspace: D^b_{c,G}(Y,R)\rightleftarrows D^b_{c,G}(Y',R):\negmedspace f^!}
$$
and also a pair of bi-exact bifunctors
$$
(-)\otimes(-):D^b_{c,G}(Y,R)\times D^b_{c,G}(Y,R)\to D^b_{c,G}(Y,R)
$$
$$
\cHom(-,-):D^b_{c,G}(Y,R)^{op}\times D^b_{c,G}(Y,R)\to D^b_{c,G}(Y,R)
$$
related by a tensor-hom adjunction. There is also a Verdier duality functor $\bD$, and an exceptional tensor product $\otimes^!$, which can be written in terms of the other functors, as can the external tensor product $\boxtimes$. We call the collection of all of these functors the \emph{six plus functors}. Notice that $G^{\mup}\rtimes\mup$ is also an affine algebraic group, so the six functor formalism exists for the target category of $St_D$. Also if $f:Y\to Y'$ is $G$-equivariant then $f^{\mup}:Y^{\mup}\to (Y')^{\mup}$ is $G^{\mup}\rtimes\mup$-equivariant. The following fact is essentially a consequence of the same fact for $^{\boxtimes p}$:
\begin{prop}Steenrod's construction is compatible with the six functor formalism. That is, we have canonical isomorphisms
$$
\begin{matrix} \hfill (f^{\mup})^*St_D & \cong & St_Df^* \hfill \\
 \hfill (f^{\mup})_*St_D & \cong & St_Df_* \hfill \\
 \hfill (f^{\mup})_!St_D & \cong & St_Df_! \hfill \\
 \hfill (f^{\mup})^!St_D & \cong & St_Df^! \hfill \\
 \hfill St_D(-)\otimes St_D(-) & \cong & St_D(-\otimes -) \hfill \\
 \hfill \cHom(St_D(-),St_D(-)) & \cong & St_D\cHom(-,-) \hfill \end{matrix}
$$
commuting with any and all adjunction morphisms of the six functor formalism.
\end{prop}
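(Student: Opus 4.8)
The plan is to reduce every assertion to the corresponding compatibility of the plain (non-equivariant) $p$-fold external tensor power
$$(-)^{\boxtimes p}\colon D^b_{c,G}(Y,R)\longrightarrow D^b_{c,G^{\mup}}(Y^{\mup},R)$$
with the six plus functors, and then to promote the resulting isomorphisms to the $\mup$-equivariant category by a symmetry argument. First I would isolate the ``primitive'' external compatibilities: for $G$-equivariant maps $f_i\colon Y_i\to Y_i'$ and objects $A_i,B_i\in D^b_{c,G}(Y_i,R)$ one has natural isomorphisms $(f_1\times\cdots\times f_p)^*(A_1\boxtimes\cdots\boxtimes A_p)\cong f_1^*A_1\boxtimes\cdots\boxtimes f_p^*A_p$ (formal), the K\"unneth isomorphism $(f_1\times\cdots\times f_p)_!(A_1\boxtimes\cdots\boxtimes A_p)\cong f_{1!}A_1\boxtimes\cdots\boxtimes f_{p!}A_p$ (standard), the isomorphism $(A_1\boxtimes\cdots\boxtimes A_p)\otimes(B_1\boxtimes\cdots\boxtimes B_p)\cong(A_1\otimes B_1)\boxtimes\cdots\boxtimes(A_p\otimes B_p)$ (formal), the external hom isomorphism $\cHom(A_1\boxtimes\cdots,B_1\boxtimes\cdots)\cong\cHom(A_1,B_1)\boxtimes\cdots$ (using constructibility), and $\omega_{Y_1\times\cdots\times Y_p}\cong\omega_{Y_1}\boxtimes\cdots\boxtimes\omega_{Y_p}$ (standard). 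The external compatibilities of $f_*$, $f^!$ and of Verdier duality $\bD$ then follow formally from these via $\bD=\cHom(-,\omega)$, $f^!=\bD f^*\bD$, $f_*=\bD f_!\bD$. All of these come equipped with their compatibilities with the adjunction $2$-morphisms of the six-functor formalism. Specializing to $f_1=\cdots=f_p=f$, $A_1=\cdots=A_p=A$, $B_1=\cdots=B_p=B$ yields the six isomorphisms of the Proposition at the level of $D^b_{c,G^{\mup}}(Y^{\mup},R)$, that is, after forgetting the $\mup$-equivariant structure.

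Next I would promote these to $\mup$-equivariant isomorphisms. The forgetful functor $D^b_{c,G^{\mup}\rtimes\mup}(Y^{\mup},R)\to D^b_{c,G^{\mup}}(Y^{\mup},R)$ is conservative and strictly compatible with all six plus functors (since $\mup$ is finite, equivariant objects are just objects with a $\mup$-action and the functors are computed by forgetting), so it is enough to check that each isomorphism above intertwines the $\mup$-actions on its source and target. By the construction of Subsection~\ref{stconst}, the $\mup$-action on $A^{\boxtimes p}$ is cyclic permutation of the tensor factors twisted by Koszul signs; the required intertwining is precisely the naturality of each external compatibility isomorphism under simultaneously permuting the $Y_i$, the $f_i$ and the $A_i$, the signs being accounted for by the fact that the symmetric monoidal structure underlying $\boxtimes$ --- from which all the isomorphisms above are built --- obeys the Koszul sign rule. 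This symmetry is part of the standard package, so the isomorphisms descend to the equivariant category, and compatibility with the adjunction morphisms can again be verified after applying the conservative forgetful functor, where it reduces to the known compatibilities.

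The one point that is genuinely delicate, rather than mere bookkeeping, is matching the Koszul signs in the second step: one must be sure that the chain-level K\"unneth and base-change maps are assembled from the same braidings as the $\mup$-equivariant structure placed on $St_C(A)$ in Subsection~\ref{stconst}. The cleanest way to discharge this is to establish the isomorphisms already on the cochain level --- where $(-)^{\boxtimes p}$ is literally a functor of complexes of sheaves, the $\mup$-action is given by explicit signed isomorphisms, and the K\"unneth and base-change maps have explicit cochain representatives --- and then descend to $D^b$ exactly as $St_C$ was descended to $St_D$ in Subsection~\ref{stconst}.
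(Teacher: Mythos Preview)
Your proposal is correct and is precisely the approach the paper indicates: the paper's entire ``proof'' is the one-line remark that the proposition ``is essentially a consequence of the same fact for $^{\boxtimes p}$,'' and your write-up is a careful unpacking of exactly that reduction together with the symmetry/conservativity argument needed to pass from $D^b_{c,G^{\mup}}$ to $D^b_{c,G^{\mup}\rtimes\mup}$. In other words you have supplied the details the paper omits, along the same route.
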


We have the same compatibilities with functors $St_D'$.

\begin{remark}Many of the six plus functors are defined in much more general contexts than the constructible derived category. For instance $f^*,f_*$ are defined in complete generality, and their compatibilities with Steenrod's construction hold in that generality. One expects that, in some sense, any time any of these functors is defined it is compatible with Steenrod's construction. But we wish to avoid making any precise statement along these lines.\end{remark}

\subsection{Tate's construction}Note that the object $St_D(\Sigma^nR)$ is canonically isomorphic to $\Sigma^{pn}R$ with its trivial $\mu_p$-equivariant structure. Here $R$ is the constant sheaf. Since a degree $n$ cohomology class is just a morphism $R\to \Sigma^nR$ in $D^b(X,R)$, we thus obtain a Frobenius-multiplicative map of multiplicative $R$-sets:
$$
St^H:H^n(X,R)\to H^{pn}_{\mup}(X^{\mup},R).
$$
This map is not Frobenius-linear, but as in Proposition \ref{induced}, its deviation from additivity is by a class induced from $H^{pn}(X^{\mup},R)$. To see how these maps interact with multiplication, we make the following definition.

\begin{definition}Let $(\cC,*,\b1,e,a,s)$ be a symmetric monoidal abelian category enriched over some commutative ring $R$, i.e. $\cC$ is an abelian category, $*$ is a bi-exact $R$-linear functor $\cC\times\cC\to\cC$, $\b1$ is an object of $\cC$, $e$ is a pair of equivalences $\b1*Id\cong Id\cong Id*\b1$, $a$ is an associativity constraint for $*$ and $s$ is a commutativity constraint for $*$, satisfying natural compatibilities. Let $A$ be an object of $\cC$. Then $s$ determines an action of $\mup$ on $A^{*\mup}$, and we define
$$
A^{(1)} := \hat{H}_{\mup}^0(A^{\mup}) := \ker_{A^{*\mup}}(1-\sigma)/\im_{A^{*\mup}}(N).
$$
This is the so-called \emph{Tate construction}. For a morphism $f:A\to B$ the morphism $f^{*\mup}:A^{*\mup}\to B^{*\mup}$ is $\mup$-equivariant and so induces a morphism $A^{(1)}\to B^{(1)}$, so that $(-)^{(1)}$ becomes a functor.\end{definition}

\begin{lem}$(-)^{(1)}$ is additive over $\bZ$. \end{lem}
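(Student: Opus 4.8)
The plan is to repeat the argument of Proposition \ref{induced} essentially verbatim and then add one short observation: an averaged (``induced'') morphism is killed by the Tate construction, for the same formal reason that such morphisms were killed after inverting $\hbar$ in Proposition \ref{Tate}. Fix an identification $\mup\cong[p]$ as in Subsection \ref{stconst} and write $\sigma$ for the generator. Since $(-)^{(1)}$ sends the zero morphism to the zero morphism (because $0^{*\mup}=0$), it suffices to show that $(-)^{(1)}$ respects addition of parallel morphisms; additivity of each map $\Hom_\cC(A,B)\to\Hom_\cC(A^{(1)},B^{(1)})$ then follows. Note that only additivity of $*$ in each variable is used, not the full $R$-enrichment -- indeed $(-)^{(1)}$ is emphatically \emph{not} $R$-linear, that being the whole point, so the robust statement is the one over $\bZ$.

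So let $f,g\colon A\to B$ be parallel. Because $*$ is additive in each slot, $(f+g)^{*\mup}\colon A^{*\mup}\to B^{*\mup}$ expands as the sum of the $2^p$ morphisms $\phi_1*\cdots*\phi_p$ indexed by tuples $\phi=(\phi_1,\dots,\phi_p)\in\{f,g\}^{\mup}$, the two constant tuples contributing precisely $f^{*\mup}+g^{*\mup}$. The cyclic group $\mup$ permutes the non-constant tuples by shifting, and freely, since its order is prime (a non-trivial stabiliser would force the tuple to be constant). Choosing orbit representatives $h_1,\dots,h_n$ with $n=(2^p-2)/p$, and putting $h=\sum_i (h_i)_1*\cdots*(h_i)_p$, a not-necessarily-equivariant morphism $A^{*\mup}\to B^{*\mup}$, naturality of the commutativity constraint $s$ identifies each conjugate $x\circ\bigl((h_i)_1*\cdots*(h_i)_p\bigr)\circ x^{-1}$ with the term attached to the shifted tuple $x\cdot h_i$, so that
$$
(f+g)^{*\mup}-f^{*\mup}-g^{*\mup}=\sum_{x\in\mup}xhx^{-1},
$$
the right-hand side being a genuinely $\mup$-equivariant morphism which, following the notation of Proposition \ref{induced}, I will write $Av(h)$.

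It remains to see that $Av(h)$, and indeed any morphism of this averaged form, induces $0$ on $\hat{H}_{\mup}^0$; this is the one step with any content. Write $\iota\colon\ker_{A^{*\mup}}(1-\sigma)\hookrightarrow A^{*\mup}$, so that $x\circ\iota=\iota$ for every $x\in\mup$. Then
$$
Av(h)\circ\iota=\sum_{x\in\mup}x\circ h\circ x^{-1}\circ\iota=\sum_{x\in\mup}x\circ(h\circ\iota)=N\circ(h\circ\iota),
$$
which factors through $\im_{B^{*\mup}}(N)$. Hence $Av(h)$ carries $\ker_{A^{*\mup}}(1-\sigma)$ into $\im_{B^{*\mup}}(N)$ and therefore acts by $0$ on the subquotient $\hat{H}_{\mup}^0=\ker(1-\sigma)/\im(N)$. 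Applying this to the $h$ above gives $(f+g)^{(1)}=f^{(1)}+g^{(1)}$, as needed. The manipulation with $\iota$ can be justified either via the Freyd--Mitchell embedding or by a direct chase of subobjects in $\cC$; I would write out the latter.

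The argument has essentially no hard step: the only point meriting care is, exactly as in Proposition \ref{induced}, matching the cyclic-shift action on tuples with conjugation by $\mup$ on the morphisms $\phi_1*\cdots*\phi_p$. Here, unlike the chain-level situation of Section \ref{stconst}, there are no Koszul signs to track -- everything is governed by naturality of $s$ -- so the match is immediate, but it deserves to be spelled out once.
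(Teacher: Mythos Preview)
Your argument is correct and matches the paper's proof of $\bZ$-linearity on morphisms essentially verbatim: expand $(f+g)^{*\mup}$ over $\{f,g\}^{\mup}$, use freeness of the cyclic action on non-constant tuples to write the discrepancy as an averaged map, and then observe that an averaged map restricted to $\ker(1-\sigma)$ factors through $\im(N)$. Your formulation $Av(h)\circ\iota=N\circ(h\circ\iota)$ is exactly the paper's computation $\sum_x xh_i|_{\ker(1-\sigma)}=N\sum_i h_i$.

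One small difference: the paper then gives a second, independent paragraph proving that $(-)^{(1)}$ preserves direct sums, by decomposing $(A\oplus B)^{*\mup}$ as $A^{*\mup}\oplus B^{*\mup}\oplus(\text{induced})$ and using $\hat{H}^0_{\mup}(k[\mup])=0$. You omit this, which is fine: once you know the functor is $\bZ$-linear on hom-sets and sends $0$ to $0$, preservation of finite biproducts is automatic in any additive category via the equational characterization $i_Ap_A+i_Bp_B=\mathrm{id}$. So your proof is complete as stated; the paper's direct-sum paragraph is strictly speaking redundant, though it has the mild advantage of exhibiting the induced summand explicitly, which is reused later in the paper.
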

\begin{proof}We essentially rehash the proof of Propositions \ref{induced}, \ref{Tate}. First we show that $(-)^{(1)}$ is linear over $\bZ$. Suppose $f,g:A\to B$ are two parallel morphisms in $\cC$. Let $\underline{f},\underline{g}$ denote the constant functions $\mup\to\{f,g\}$ with respective values $f,g$. Then $\mup$ acts freely on $\{f,g\}^{\mup}-\{\underline{f},\underline{g}\}$; choose a set $\{h_1,\ldots,h_n\}$ of orbit representatives ($n=(2^p-2)/p$). Then each $h_i$ determines a non-equivariant map $A^{*\mup}\to B^{*\mup}$, and we have
$$
(f+g)^{*\mup}-f^{*\mup}-g^{*\mup}=\sum_{x\in\mup}\sum_{i=1}^n xh_ix^{-1}.
$$
Restricting to $\ker_{A^{*\mup}}(1-\sigma)$, this becomes
$$
((f+g)^{*\mup}-f^{*\mup}-g^{*\mup})_{\ker_{A^{*\mup}}(1-\sigma)}=\sum_{x\in\mup}\sum_{i=1}^n xh_i=N\sum_{i=1}^n h_i
$$
which factors through $\im_{B^{*\mup}}(N)$ as required.

Next we show that $(-)^{(1)}$ preserves direct sums. Let $\underline{A},\underline{B}$ denote the constant functions $\mup\to\{A,B\}$ with respective values $A,B$. Then $\mup$ acts freely on $\{A,B\}^{\mup}-\{\underline{A},\underline{B}\}$; choose a set $\{C_1,\ldots,C_n\}$ of orbit representatives. Then each $C_i$ determines an object of $\cC$, and as a $\mup$-module in $\cC$ we have
$$
(A\oplus B)^{*\mup}\cong A^{*\mup}\oplus B^{*\mup}\oplus \bigoplus_{i=1}^nC_i[\mup].
$$
The result then follows from the fact that $\hat{H}^0_{\mup}(k[\mup])=0$ in $R\mmod$.\end{proof}

Let $\SVect_k$ denote the symmetric monoidal category of $\bZ/2$-super graded $k$-vector spaces.

%\begin{lem}Suppose that $R=k$ is a field of characteristic $p$ and that $\cC$ admits a super-fiber functor $\cC\to \SVect_k$. Then $^{(1)}$ is exact, monoidal and Frobenius-linear over $k$. In the case $\cC=\SVect_k$, $^{(1)}$ is equivalent to the functor which dilates degrees by $p$ and then tensors the $k$-linear structure along the Frobenius map $F:k\to k$.\end{lem}

\begin{lem}Suppose that $R=k$ is a field of characteristic $p$ and that $\cC$ admits a super-fiber functor $\cC\to \SVect_k$. Then $(-)^{(1)}$ is exact, monoidal and Frobenius-linear over $k$. In the case $\cC=\SVect_k$, $(-)^{(1)}$ is equivalent to the functor $k\otimes_F$(-) which tensors the $k$-linear structure along the Frobenius map $F:k\to k$.\end{lem}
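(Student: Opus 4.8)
The plan is to reduce everything to the case $\cC = \SVect_k$ via the super-fiber functor, and then to compute $(-)^{(1)}$ on $\SVect_k$ by hand. First I would observe that the super-fiber functor $\omega:\cC\to\SVect_k$ is exact, faithful, and symmetric monoidal, hence commutes with the formation of $A^{*\mup}$ together with its $\mup$-action (the commutativity constraint is carried to the commutativity constraint of $\SVect_k$), and therefore commutes with the subquotient $\hat{H}^0_{\mup}$ defining $(-)^{(1)}$. So $\omega\circ(-)^{(1)}\cong(-)^{(1)}\circ\omega$. Since $\omega$ is faithful and exact, to check that $(-)^{(1)}$ on $\cC$ is exact it suffices to check it on $\SVect_k$; likewise monoidality and Frobenius-linearity can be read off after applying $\omega$, because these are assertions about (iso)morphisms and $\omega$ is faithful and detects isomorphisms. (Frobenius-linearity also uses the previous lemma, which already gives $\bZ$-additivity, so the only extra content is the scalar action.)

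Next I would carry out the computation for $\cC=\SVect_k$. Let $V = V_0\oplus V_1$ be a super vector space. In $V^{\otimes\mup}$ the generator $\sigma$ acts by the cyclic-permutation isomorphism twisted by the Koszul sign. Decompose $V^{\otimes\mup}$ into $\sigma$-orbits of pure tensors $v_{1}\otimes\cdots\otimes v_{p}$ coming from a chosen homogeneous basis: an orbit is free unless all $p$ factors are equal, i.e. unless the tensor is $v^{\otimes p}$ for a basis vector $v$. On a free orbit the subquotient $\ker(1-\sigma)/\im(N)$ vanishes (the span of a free orbit is $k[\mup]$ as a $\mup$-module, and $\hat{H}^0_{\mup}(k[\mup])=0$), exactly as in the proof of the preceding lemma. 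On the diagonal line $k\cdot v^{\otimes p}$, the element $\sigma$ acts by the scalar $(-1)^{|v|(p-|v|)\cdot\text{(number of transpositions)}}$; since $p$ is odd, a $p$-cycle is an even permutation, so in fact $\sigma$ acts trivially and $\ker(1-\sigma)$ is the whole line while $\im(N) = p\cdot k\cdot v^{\otimes p} = 0$. Hence $V^{(1)}$ has as a $k$-basis the symbols $\{v^{\otimes p}\}$ indexed by a basis $\{v\}$ of $V$, with the parity of $v^{\otimes p}$ equal to $p|v| \equiv |v| \pmod 2$. This identifies the underlying super vector space of $V^{(1)}$ with that of $k\otimes_F V$: an isomorphism of abelian groups sending $v^{\otimes p}\mapsto 1\otimes v$, under which scalar multiplication by $\lambda\in k$ on $V^{(1)}$ — which comes from $\lambda^{*\mup} = \lambda^{\otimes p}$ acting as $\lambda^p$ — corresponds to multiplication by $\lambda^p = F(\lambda)$, i.e. exactly the twisted scalar action of $k\otimes_F V$. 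Naturality in $V$ and compatibility with the symmetric monoidal structure (the canonical map $V^{(1)}\otimes W^{(1)}\to (V\otimes W)^{(1)}$ sends $v^{\otimes p}\otimes w^{\otimes p}$ to $(v\otimes w)^{\otimes p}$, matching $k\otimes_F(-)$ being monoidal) are then immediate from the formulas, and exactness follows since $(-)^{(1)}\cong k\otimes_F(-)$ is the base change along the flat (indeed faithfully flat, $k$ being a field) ring map $F$.

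I expect the main obstacle to be the bookkeeping of Koszul signs in the $\SVect_k$ computation — specifically, verifying that $\sigma$ acts trivially (not by $-1$) on the diagonal line $k\cdot v^{\otimes p}$ when $v$ is odd, which is where the hypothesis that $p$ is odd is used in an essential way, and keeping track of the fact that even though each summand $A^{i_1}\boxtimes\cdots$ picks up a sign $(-1)^{i_1(n-i_1)}$ under $\sigma$ in the cochain-level construction, on the pure diagonal $v^{\otimes p}$ these accumulate to the sign of a $p$-cycle, which is trivial. The rest — exactness, monoidality, Frobenius-linearity on general $\cC$ — is then a formal consequence of faithful exactness of the super-fiber functor and of the explicit answer on $\SVect_k$.
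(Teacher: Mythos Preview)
Your approach is essentially identical to the paper's: reduce to $\SVect_k$ via the super-fiber functor (which, being exact, faithful, and symmetric monoidal, commutes with Tate's construction), then compute directly by decomposing $V^{\otimes\mup}$ into $\sigma$-orbits of basis tensors. The paper's proof consists of the single sentence ``it is a simple calculation using bases''; you have spelled that calculation out correctly.

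One small inaccuracy worth flagging: your claim that the monoidal structure map $V^{(1)}\otimes W^{(1)}\to (V\otimes W)^{(1)}$ sends $v^{\otimes p}\otimes w^{\otimes p}$ to $(v\otimes w)^{\otimes p}$ on the nose, ``matching $k\otimes_F(-)$ being monoidal'', is not quite right. The shuffle isomorphism $V^{\otimes p}\otimes W^{\otimes p}\cong (V\otimes W)^{\otimes p}$ in $\SVect_k$ introduces a Koszul sign $(-1)^{|v|\,|w|\binom{p}{2}}$, which is precisely the content of the very next lemma in the paper. This does not affect the validity of the lemma you are proving --- $(-)^{(1)}$ is still monoidal (the structure map is an isomorphism regardless of sign) and is still equivalent to $k\otimes_F(-)$ as a plain $k$-linear functor --- but the identification is not one of \emph{monoidal} functors when $p\equiv 3\pmod 4$ and odd degrees are in play.
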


\begin{proof}Since Tate's construction commutes with the fiber functor, it is enough to take $\cC= \SVect_k$, where it is a simple calculation using bases.\end{proof}

Now suppose that $A,B$ are objects of $\cC$. We have a $\mup$-equivariant isomorphism $(A*B)^{*\mup}\cong (A^{*\mup}*B^{*\mup})$. We have also the natural inclusions
$$
\ker_{A^{*\mup}}(1-\sigma)*\ker_{B^{*\mup}}(1-\sigma)\to \ker_{A^{*\mup}*B^{*\mup}}(1-\sigma)
$$
$$
\im_{A^{*\mup}}(N)*\ker_{B^{*\mup}}(1-\sigma)\to \im_{A^{*\mup}*B^{*\mup}}(N)
$$
$$
\ker_{A^{*\mup}}(1-\sigma)*\im_{A^{*\mup}}(N)\to \im_{A^{*\mup}*B^{*\mup}}(N)
$$
which induce a map $A^{(1)}*B^{(1)}\to (A*B)^{(1)}$. Suppose that $(A,\b1_A,m_A)$ is a unital ring in $\cC$. Then $A^{(1)}$ still has a multiplication
$$
m_{A^{(1)}}:A^{(1)}*A^{(1)}\to (A*A)^{(1)}\xrightarrow{m_A^{(1)}}A^{(1)}.
$$
Also, there is a canonical isomorphism $\ker_{\b1^{*\mup}}(1-\sigma)\cong\b1$, hence a canonical surjection $\b1\to \b1^{(1)}$ which determines a map
$$
\b1_{A^{(1)}}:\b1\to \b1^{(1)}\xrightarrow{\b1_A^{(1)}}A^{(1)}.
$$
One may check that this makes $A^{(1)}$ into a ring, and moreover that $A^{(1)}$ is associative or commutative if $A$ is. The following lemma explains how this looks in the main example.

\begin{lem}\label{signchange}Suppose $A=\bigoplus_{i\in \bZ/2}A_i$ is a unital ring in $\cC=\SVect_k$. Then the ring structure on $A^{(1)}$ corresponds under the identification $A^{(1)}=k\otimes_FA$ to the ring structure with unit $1\otimes_F\b1_A$ and multiplication
%$$
%(A^{(1)})_i=\left\{\begin{matrix}F^* A_{i/p} & p|i\hfill\\ 
%0 & otherwise\end{matrix}\right.
%$$
%with multiplication given by
$$
m_{A^{(1)}}(r\otimes a,r'\otimes a')=(-1)^{ij{p\choose 2}}rr'\otimes m_A(a,a')
$$
for $a\in A_i,a'\in A_j$ and $r,r'\in k$.\end{lem}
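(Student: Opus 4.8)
The plan is to do the computation directly in $\SVect_k$, using the identification $A^{(1)}=k\otimes_F A$ from the previous lemma. First I would make that identification explicit on elements: for a homogeneous $a\in A_i$, the $p$-fold power $a^{\otimes p}$ in the $*$-power $A^{*\mup}$ lies in $\ker(1-\sigma)$ — this is where oddness of $p$ is used, since $\sigma$ acts on $a^{\otimes p}$ by the Koszul sign $(-1)^{i(p-1)i}$, which is $1$ because $p-1$ is even — and under the isomorphism of the previous lemma its class $[a^{\otimes p}]\in A^{(1)}$ corresponds to $1\otimes_F a$. Since $\{1\otimes_F e_\alpha\}$ is a $k$-basis of $k\otimes_F A$ for any homogeneous $k$-basis $\{e_\alpha\}$ of $A$, and since $m_{A^{(1)}}$ is additive and $k$-bilinear by construction, it is enough to evaluate $m_{A^{(1)}}$ on pairs of classes $[a^{\otimes p}]$, $[a'^{\otimes p}]$ with $a\in A_i$, $a'\in A_j$ homogeneous.

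Next I would unwind $m_{A^{(1)}}$ as the composite $A^{(1)}*A^{(1)}\to (A*A)^{(1)}\xrightarrow{m_A^{(1)}}A^{(1)}$. The second map is just $(-)^{(1)}$ applied to the morphism $m_A$; on a class $[c^{\otimes p}]$ with $c\in A*A$ homogeneous it sends $[c^{\otimes p}]\mapsto[m_A^{\otimes p}(c^{\otimes p})]=[m_A(c)^{\otimes p}]$, which involves no permutation of tensor factors and hence no sign. So every sign that occurs comes from the lax-monoidality map $A^{(1)}*B^{(1)}\to(A*B)^{(1)}$ described in the text, whose underlying morphism is induced by the canonical $\mup$-equivariant isomorphism $(A^{*\mup})*(B^{*\mup})\cong(A*B)^{*\mup}$; on the classes at hand it sends $[a^{\otimes p}]*[a'^{\otimes p}]$ to the class of the image of $a^{\otimes p}\otimes a'^{\otimes p}$ under that isomorphism.

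The heart of the proof is therefore the Koszul-sign count. The canonical isomorphism $(A^{*\mup})*(B^{*\mup})\to(A*B)^{*\mup}$ is the symmetry-constraint isomorphism for the shuffle permutation sending the word $a_1\cdots a_p\,b_1\cdots b_p$ to $a_1 b_1 a_2 b_2\cdots a_p b_p$; no two $a$'s and no two $b$'s get transposed, so taking all $a_m=a\in A_i$ and all $b_m=a'\in A_j$ the sign is $(-1)^{ijN}$, where $N$ is the number of pairs $(m,l)$ with $b_l$ landing before $a_m$ in the target, namely $N=\sum_{m=1}^{p}(m-1)=\binom{p}{2}$. Thus $a^{\otimes p}\otimes a'^{\otimes p}\mapsto(-1)^{ij\binom{p}{2}}(a\otimes a')^{\otimes p}$, and nothing else contributes a sign — in particular the $\mup$-equivariant structures on $a^{\otimes p}$, $a'^{\otimes p}$ and $(a\otimes a')^{\otimes p}$ are all trivial precisely because $p$ is odd. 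Composing with $m_A^{(1)}$ gives $m_{A^{(1)}}([a^{\otimes p}],[a'^{\otimes p}])=(-1)^{ij\binom{p}{2}}[m_A(a,a')^{\otimes p}]$; translating through $[x^{\otimes p}]\leftrightarrow 1\otimes_F x$ and extending $k$-bilinearly yields the asserted formula, while the unit statement is the same computation with $a'=\b1_A$, where the sign is trivially $1$. The main obstacle is purely bookkeeping: being certain that the $\binom{p}{2}$-fold crossing of $a$'s past $a'$'s is the only surviving sign, and that all the equivariance twists cancel because $p-1$ is even.
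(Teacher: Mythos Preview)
Your proposal is correct and follows essentially the same route as the paper's proof: identify $r\otimes_F a$ with the class of $r\cdot a^{\otimes p}$, then observe that the only sign arises from the Koszul shuffle in the lax-monoidality map $(A^{*\mu_p})*(A^{*\mu_p})\cong (A*A)^{*\mu_p}$, which crosses the $x^{\text{th}}$ copy of $A_i$ past the $y^{\text{th}}$ copy of $A_j$ for each $x>y$, giving $\binom{p}{2}$ crossings and hence the sign $(-1)^{ij\binom{p}{2}}$. You add some bookkeeping the paper leaves implicit---checking that $a^{\otimes p}$ is genuinely $\sigma$-fixed via $(-1)^{i^2(p-1)}=1$, and that $m_A^{(1)}$ contributes no further sign---but the argument is the same.
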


\begin{proof}The isomorphism of $k\otimes_FA_{i}$ with $(A^{(1)})_{i}$ sends the element $r\otimes a$ to the class of $r.\underbrace{a\otimes\ldots\otimes a}_{\text{$p$ times}}$. The natural map 
$$
(A^{(1)})_{i}\otimes (A^{(1)})_{j}\cong \hat{H}^0_{\mup}((A_i)^{*\mup})\otimes \hat{H}^0_{\mup}((A_j)^{*\mup})\to \hat{H}^0_{\mup}((A_i\otimes A_j)^{*\mup})
$$
%$$
%(A^{(1)})_{pi}\otimes (A^{(1)})_{pj}\cong \hat{H}^0_{\mup}((A_i)^{*\mup})\otimes \hat{H}^0_{*\mup}((A_j)^{\mup})\to \hat{H}^0_{\mup}((A_i\otimes A_j)^{*\mup})
%$$
sends the class of $\underbrace{a\otimes\ldots\otimes a}_{\text{$p$ times}}\otimes \underbrace{a'\otimes\ldots\otimes a'}_{\text{$p$ times}}$ to the class of $(-1)^{ij{p\choose 2}}\underbrace{(a\otimes a')\otimes\ldots\otimes (a\otimes a')}_{\text{$p$ times}}$, since it entails permuting the $x^{th}$ copy of $A_i$ with the $y^{th}$ copy of $A_j$ for every $p\geq x>y\geq 1$. \end{proof}

Arguing the same way, we have the following:

\begin{prop}Let $A$ be a Hopf algebra in $\SVect_k$. Then $A^{(1)}$ is naturally a Hopf algebra in $\SVect_k$. It has multiplication and unit given as in \ref{signchange}, comultiplication given by
%$$
%(A^{(1)})_i=\left\{\begin{matrix}F^* A_{i/p} & p|i\hfill\\ 
%0 & otherwise\end{matrix}\right.
%$$
$$
\Delta_{A^{(1)}}(r\otimes a)=r\otimes (-1)^{{p\choose 2}\deg\otimes\deg}\Delta_{A}(a),
$$
counit given by $\epsilon_{A^{(1)}}(r\otimes a) = r(\epsilon_A(a))^p$ and antipode given by $S_{A^{(1)}}(r\otimes a) = r\otimes S_A(a)$. Moreover the functor $(-)^{(1)}$ on $\SVect_k$ upgrades to  functor
$$
(-)^{(1)}:A\comod\to A^{(1)}\comod.
$$
For an $A$-comodule $M$, the $A^{(1)}$-comodule structure on $M^{(1)}$ is given by
$$
\Delta_{M^{(1)}}(r\otimes m) = r\otimes(-1)^{{p\choose2}\deg\otimes\deg}\Delta_M(m).
$$
\end{prop}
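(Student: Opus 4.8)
The plan is to promote the observation --- already implicit in Lemma~\ref{signchange} and in the lemma asserting that $(-)^{(1)}$ is exact, monoidal and Frobenius-linear --- that $(-)^{(1)}\colon\SVect_k\to\SVect_k$ is a \emph{symmetric monoidal} functor, and then to invoke the general fact that a strong symmetric monoidal functor transports every piece of algebraic structure encoded by commutative diagrams: algebras to algebras, coalgebras to coalgebras, bialgebras to bialgebras, Hopf algebras to Hopf algebras, and (co)modules to (co)modules. The lax monoidal structure maps $A^{(1)}*B^{(1)}\to(A*B)^{(1)}$ constructed just before Lemma~\ref{signchange} are isomorphisms (that lemma, proved by reduction to $\SVect_k$ and a basis computation), so the first task is to check that, together with the canonical map $\b1\to\b1^{(1)}$, they satisfy the associativity pentagon, the unit triangles and the symmetry hexagons. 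This is a bookkeeping of Koszul signs of exactly the kind already performed in the proof of Lemma~\ref{signchange}: commuting the $x$-th tensor slot past the $y$-th for all $p\ge x>y\ge 1$ contributes $(-1)^{\binom{p}{2}(\cdot)}$, and one checks this is compatible with the associativity and commutativity constraints of $\SVect_k$. Since the structure maps are isomorphisms, $(-)^{(1)}$ is at the same time a symmetric comonoidal functor, via their inverses.

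Granting this, everything is formal. A Hopf algebra in $\SVect_k$ is an algebra object $(A,m_A,\b1_A)$ with a coalgebra structure $(\Delta_A,\epsilon_A)$ making it a bialgebra, plus an antipode $S_A$; all the data are morphisms and all the axioms commutative diagrams. Applying $(-)^{(1)}$ yields the algebra $(A^{(1)},m_{A^{(1)}},\b1_{A^{(1)}})$ described before Lemma~\ref{signchange}, a coalgebra structure built from $\Delta_A$ and $\epsilon_A$ via the comonoidal maps, and the bialgebra and antipode diagrams carried over verbatim; hence $A^{(1)}$ is a Hopf algebra. Similarly an $A$-comodule $M$ is an object with a coaction $M\to A*M$ subject to coassociativity and counitality, and $(-)^{(1)}$ sends it to a coaction $M^{(1)}\to(A*M)^{(1)}\xrightarrow{\sim}A^{(1)}*M^{(1)}$ making $M^{(1)}$ an $A^{(1)}$-comodule; functoriality in $M$ is automatic, giving the functor $A\comod\to A^{(1)}\comod$.

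It remains to read off the formulas. The multiplication and unit are Lemma~\ref{signchange}. For the comultiplication one composes $(\Delta_A)^{(1)}\colon A^{(1)}\to(A*A)^{(1)}$ with the comonoidal isomorphism $(A*A)^{(1)}\xrightarrow{\sim}A^{(1)}*A^{(1)}$, which is the inverse of the structure map and so carries the same sign $(-1)^{\binom{p}{2}\deg\otimes\deg}$ identified in Lemma~\ref{signchange}; this gives $\Delta_{A^{(1)}}(r\otimes a)=r\otimes(-1)^{\binom{p}{2}\deg\otimes\deg}\Delta_A(a)$, and the very same computation applied to the coaction $\Delta_M$ yields the comodule formula. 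For the counit, $(\epsilon_A)^{(1)}\colon A^{(1)}\to\b1^{(1)}$ followed by the canonical identification $\b1^{(1)}\cong\b1=k$ sends the class of $r\cdot a\otimes\cdots\otimes a$ to the class of $r\cdot\epsilon_A(a)\otimes\cdots\otimes\epsilon_A(a)$, that is to $r\,\epsilon_A(a)^p$. The antipode involves no monoidal structure, so $S_{A^{(1)}}=(S_A)^{(1)}$ is $r\otimes a\mapsto r\otimes S_A(a)$.

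The one genuinely non-formal point, and thus the main obstacle, is the coherence verification of the first paragraph: one must be sure the $(-1)^{\binom{p}{2}}$-twists in the monoidal structure maps fit together (pentagon, hexagons, unit triangles), so that ``$(-)^{(1)}$ is symmetric monoidal'' is a legitimate statement and not a structure that fails coherence. Since $\binom{p}{2}=\tfrac{p(p-1)}{2}$ is $0$ or $1$ mod~$2$ according as $p\equiv 1$ or $3$ mod~$4$, this is a $\bZ/2$-valued quadratic bookkeeping, best settled once and for all at the level of the super-fiber functor exactly as in the proof of Lemma~\ref{signchange}.
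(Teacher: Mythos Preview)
Your proposal is correct and is essentially the approach the paper indicates: the paper's entire proof is the phrase ``Arguing the same way'' (i.e.\ as in Lemma~\ref{signchange}), and your write-up is a careful unpacking of what that means. Your framing via a symmetric monoidal structure on $(-)^{(1)}$ and transport of structure is the clean way to organize that argument, and the underlying sign computation --- the $(-1)^{\binom{p}{2}ij}$ from reshuffling tensor factors --- is exactly the one the paper uses.
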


%\begin{remark}We can relax the algebraic structures under consideration and still get similar results, e.g. $A$ might be a monoid in $

\begin{example}Suppose $\cO$ is a commutative Hopf algebra in $\SVect_k$. Then the monoidal category $\cC$ of $\cO$-comodules is symmetric. Taking $p^{th}$ powers gives a (Frobenius) map of Hopf algebras $F_{\cO}:\cO^{(1)}\to\cO$. Then, Tate's construction on $\cC$ factors as
$$
\cO\comod\xrightarrow{(-)^{(1)}}\cO^{(1)}\comod\xrightarrow{F_{\cO}^*}\cO\comod.
$$
For instance, we could take $\cO$ to be the ring of functions $\cO(\bG_m)$ on the multiplicative group $\bG_m$ over $k$, concentrated in degree $0\in\bZ/2$. Then $\cO(\bG_m)\comod$ contains as a full subcategory over $\SVect_k$ the category of $\bZ$-super graded vector spaces, and Tate's construction there is isomorphic to the functor which applies $k\otimes_F(-)$ and multiplies degrees by $p$.
\end{example}

Recall we have the Frobenius-multiplicative maps of multiplicative $k$-sets
$$
St^H:H^n(X,k)\to H^{pn}_{\mup}(X^{\mup},k).
$$
We view cohomology rings as commutative ring objects of $\bZ$-super graded vector spaces; in particular we can apply functor $(-)^{(1)}$ to them. By Lemma \ref{perf}, if $k$ is perfect then it gives a map of $\bZ$-super graded $k$-sets
$$
St_{ex}:H^*(X,k)^{(1)}\to H^{*}_{\mup}(X^{\mup},k).
$$
The following fact is immediate from the constructions.
\begin{prop}$St_{ex}$ respects the multiplicative $k$-monoidal structures.\end{prop}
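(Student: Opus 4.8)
The plan is to deduce this from the monoidality of Steenrod's construction (Subsection~\ref{six}) together with the explicit sign of Lemma~\ref{signchange}. Recall that a degree-$n$ class of $X$ is a morphism $k\to\Sigma^nk$ in $D^b(X,k)$, where $k$ denotes the constant sheaf; that $St^H$ is the induced map on such $\Hom$-sets, built from $St_D$ and the canonical identification $St_D(\Sigma^nk)\cong\Sigma^{pn}k$ with its trivial $\mup$-structure (here oddness of $p$ is used, since $(p-1)n^2$ is even); and that the cup product on $H^*(X,k)$ is realized by the composite
\[
k\cong k\otimes k\xrightarrow{\;f\otimes g\;}\Sigma^mk\otimes\Sigma^nk\cong\Sigma^{m+n}k,
\]
with $\mathrm{id}$ as the unit. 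The same descriptions hold verbatim on $X^{\mup}$.

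First I would apply $St_D$ to this composite. Since $St_D$ is a functor, $St_D$ of a composite is the composite of the $St_D$'s; since $St_D$ is compatible with $\otimes$ and with shifts ($St_D\Sigma\cong\Sigma^pSt_D$) (Subsection~\ref{six}), and $St_D(\mathrm{id})=\mathrm{id}$, the only place a discrepancy with $St^H(f)\cup St^H(g)$ can enter is the monoidality isomorphism $St_D(\Sigma^mk\otimes\Sigma^nk)\cong St_D(\Sigma^mk)\otimes St_D(\Sigma^nk)$. This isomorphism is the $p$-fold interleaving of external tensor powers, so on objects concentrated in cohomological degrees $m$ and $n$ it is the identity twisted by the Koszul sign $(-1)^{mn{p\choose 2}}$: a factor $(-1)^{mn}$ for each of the ${p\choose 2}$ transpositions of an $x$-th degree-$m$ tensor factor past a $y$-th degree-$n$ factor ($x>y$). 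Hence
\[
St^H(f\cup g)=(-1)^{mn{p\choose 2}}\,St^H(f)\cup St^H(g),
\]
and $St^H$ sends $\mathrm{id}$ to $\mathrm{id}$.

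Second I would match this against the Tate construction. By Lemma~\ref{signchange} the product on $H^*(X,k)^{(1)}$, transported to $k\otimes_F H^*(X,k)$, is $(r\otimes a)\cdot(r'\otimes a')=(-1)^{mn{p\choose 2}}rr'\otimes(a\cup a')$ for $a,a'$ of degrees $m,n$ --- and the sign there is produced by exactly the same transpositions. By construction $St_{ex}$ is a map of $\bZ$-super graded $k$-sets with $St_{ex}(1\otimes a)=St^H(a)$ (this is the point of Lemma~\ref{perf}), so, using $k$-equivariance of $St_{ex}$ and the previous paragraph,
\begin{align*}
St_{ex}\bigl((r\otimes a)\cdot(r'\otimes a')\bigr)&=(-1)^{mn{p\choose 2}}rr'\cdot St^H(a\cup a')\\
&=(-1)^{mn{p\choose 2}}rr'\cdot(-1)^{mn{p\choose 2}}\,St^H(a)\cup St^H(a')\\
&=rr'\cdot St^H(a)\cup St^H(a')=St_{ex}(r\otimes a)\cup St_{ex}(r'\otimes a').
\end{align*}
So $St_{ex}$ is multiplicative; unitality is the identity $St^H(\mathrm{id})=\mathrm{id}$, $St_{ex}$ is already $k$-equivariant, and it respects the $\bZ$-grading because $St^H$ multiplies degrees by $p$. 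The only real content is the sign bookkeeping of the middle paragraph: the $p$-fold interleaving appearing in $\boxtimes p$ and the permutation used in the proof of Lemma~\ref{signchange} are literally the same, so the two factors $(-1)^{mn{p\choose 2}}$ agree and cancel --- this is the sense in which the statement is ``immediate from the constructions''. (No equivariant-structure subtlety enters, as every $St_D(\Sigma^nk)$ carries its canonical trivial $\mup$-structure.)
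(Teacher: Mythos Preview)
Your proof is correct and is precisely the unpacking of what the paper calls ``immediate from the constructions'': the monoidality isomorphism for $St_D$ (the $p$-fold interleaving of $(A\otimes B)^{\boxtimes p}\cong A^{\boxtimes p}\otimes B^{\boxtimes p}$) and the shuffle appearing in the proof of Lemma~\ref{signchange} are literally the same permutation, so both sides pick up the same Koszul sign $(-1)^{mn{p\choose 2}}$ and these cancel. The paper gives no further argument beyond this observation, and you have made it explicit.
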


\begin{remark}If $k$ is not perfect, then the map $H^*(X,k)\to H^{*}_{\mup}(X^{\mup},k)$ of $\bZ/2$-super graded sets respects the multiplicative monoidal structures up to the sign change of Lemma \ref{signchange}. There is presumably an appropriate non-linear version of Tate's construction which would allow us to say that we really have a certain $\bZ$-super graded $k$-monoid $H^*(X,k)^{(1)}_{nl}$ and a map of monoids $H^*(X,k)^{(1)}_{nl}\to H^{*}_{\mup}(X^{\mup},k)$, but we prefer for simplicity not to do it.\end{remark}

\subsection{Borel-Moore homology} 

We return to the setting of Subsection \ref{six}. Let $\omega$ denote the $G$-equivariant dualizing complex on $Y$ with coefficients in $R$. We have a canonical isomorphism\footnote{here of course the second $\omega$ denotes the $G^{\mup}\rtimes\mup$-equivariant dualizing complex on $Y^{\mup}$ with coefficients in $R$} $St_D(\omega)\cong\omega$. By definition, the $G$-equivariant Borel-Moore homology of $Y$ is
$$
H_n^{BM,G}(Y,R):=\Hom_{D^b_G(Y,R)}(R,\Sigma^n\omega).
$$
See Subsection \ref{BM} for more about this. Altogether $H_n^{BM,G}(Y,R)$ form a $\bZ$-super graded $H^*_G(Y,R)$-module; in particular it is a module for $H^*_G(*,R)$. By functoriality we have the non-linear maps
$$
St^{BM}:H_n^{BM,G}(Y,R)\to H_{pn}^{BM,G^{\mup}\rtimes\mup}(Y^{\mup},R).
$$
This is a map of $St^H$-monoids. Its discrepancy from additivity it averaged from $H_{pn}^{BM,G^{\mup}}(Y^{\mup},R)$. If $R$ is a perfect field $k$ of characteristic $p$, we can say that we have a non-linear graded map of $St_{ex}$-monoids:
$$
St^{BM}_{ex}:H_*^{BM,G}(Y,k)^{(1)}\to H_{*}^{BM,G^{\mup}\rtimes\mup}(Y^{\mup},k).
$$

\subsection{Steenrod operations}For simplicity let's assume $k$ to be perfect from now on. Let us compose $St_{ex}$ with the restriction map to the diagonal:
$$
St_{in}:H^*(X,k)^{(1)}\xrightarrow{St_{ex}} H^{*}_{\mup}(X^{\mup},k)\xrightarrow{\Delta^*} H^{*}_{\mup}(X,k)\cong H^*(X,k)[a,\hbar].
$$
This is again a map of multiplicative $k$-monoids. Tautologically we have $St_{in}(x)=x^p\mod(a,\hbar)$. Also $St_{in}$ is compatible with pull-back maps in cohomology in the natural way. Since induction commutes with restriction, the difference between $St_{in}(x+y)$ and $St_{in}(x)+St_{in}(y)$ is induced from a cohomology class $z\in H^\bullet(X;k)$. Since $\mup$ acts trivially on $X$, that means that it is equal to $pz=0$, so $St_{in}$ is linear. That is, we have a map of super-commutative $k$-algebras
$$
St_{in}:H^*(X,k)^{(1)}\to H^*(X,k)[a,\hbar].
$$

\begin{remark}The coefficients of $\hbar^m,a\hbar^m$ in $St_{in}$ are \emph{not} the Steenrod operations. More precisely, they are the Steenrod operations only up to some non-zero scalars. Even more precisely, let $x\in H^n(X,k)$ and let $p=2q+1$. Consider 
$$
(-1)^{qn(n-1)/2}(q!)^{-n}St_{in}(x).
$$
where $x$ is viewed as a degree $pn$ element of $H^*(X,k)^{(1)}$. The coefficient of $\hbar^m$ in this expression vanishes unless $m=\frac{1}{2}(p-1)(n-2s)$ for some $s$ such that $2s\leq n$, in which case that coefficient is equal to $(-1)^sP^s(x)$ where $P^s$ is the $s^{th}$ Steenrod operation. Similarly, the coefficient of $a\hbar^m$ in that expression vanishes unless $m=\frac{1}{2}(p-1)(n-2s)-1$ for some $s$ such that $2s\leq n$, in which case that coefficient is equal to $(-1)^{s+1}\beta P^s(x)$ where $\beta$ is the Bockstein operation.
\end{remark}

\subsection{Artin-Schreier}

We indicate how the Artin-Schreier map comes naturally out of the above considerations. First note that if $n$ is even then the number 
$$
(-1)^{qn(n-1)/2}(q!)^{-n}
$$
boils down to $(-1)^{n/2}$. It is a standard fact that on a degree $2$ class $x$ we have $P^0(x)=x,P^1(x)=x^p$, and higher powers vanish. Therefore
$$
St_{in}(x) = x^p - \hbar^{p-1}x + \hbar^{p-2}\beta(x).
$$
Let $X=BT$ for some compact torus $T$. Since its cohomology is supported in even degrees, the Bockstein operator acts as zero and $St_{in}$, on the level of $k$-cohomology, is exactly the $\hbar$-Artin-Schreier map
$$
\cO(\liet_{k})^{(1)}\xrightarrow{AS_\hbar} \cO(\liet_{k})[\hbar] \subset \cO(\liet_{k})[a,\hbar]
$$
as defined in Fact \ref{torusfact}.\

Recall that if $G$ is a compact Lie group with maximal torus $T$, and $p$ is large enough with respect to the Weyl group of $G$, then the projection $BT\to BG$ induces an inclusion
$$
H^*(BG,k)\to H^*(BT,k)
$$ 
which is identified with
$$
\cO(\liet_{k}//W)\to \cO(\liet_{k}).
$$
The $\hbar$-Artin-Schreier map induces a map on subspaces
$$
\cO(\liet_{k}//W)^{(1)}\xrightarrow{AS_\hbar} \cO(\liet_{k}//W)[\hbar]
$$
which is also important in the theory of Frobenius-constant quantization. The point is that this map is also induced by $St_{in}$, since it is compatible with pullbacks.\

It is entertaining to show more directly how $AS_\hbar$ arises, without relying on any outside facts about Steenrod operations. We can reduce to the rank one case $T=S^1$. Let $b$ denote the degree $2$ generator (first Chern class of tautological line bundle) of $BS^1$; we need to show that $St_{in}(b)=b^p-\hbar^{p-1}b$. Let $C_p\subset S^1$ denote the cyclic group of order $p$, considered as distinct from $\mup$. Consider the projection
$$
BC_p\to BS^1.
$$
It induces an injective map
$$
k[b]\to k[s,b]
$$
in cohomology, where $s$ is a degree $1$ generator. By functoriality it is enough to prove the equality when $b$ is regarded as a cohomology class of $BC_p$. Note that amongst degree $2p$ elements of $k[b,\hbar]$, the desired element $b^p-\hbar^{p-1}b$ is the unique one which gives $0$ when we set $b$ to any multiple in $\bfp$ of $\hbar$, and gives $b^p$ when we set $\hbar=0$. The latter statement is automatic, so we have to check the former. So fix some $t\in\bfp$. Having chosen an isomorphism $\mup\cong C_p$, $t$ determines a group homomorphism $\mup\to C_p$.\

The constant sheaf $\Sigma^nk$ of $BC_p$ is contained in the full subcategory 
$$
D^b(k[C_p]\mmod)=D_{C_p}(*)\subset D(BC_p).
$$
Our coefficients are $k$, which we drop from the notation. It is easier for our purpose to work in $D^b(k[C_p]\mmod)$. Compatible with the functor $St_D$ out of $D(BC_p)$ we have the functor
$$
St_D:D^b(k[C_p]\mmod)\to D^b(k[\mup\ltimes(C_p)^{\times p}]\mmod)
$$
This is then composed with the diagonal restriction
$$
D^b(k[\mup\ltimes(C_p)^{\times p}]\mmod)\xrightarrow{\Delta^*} D^b(k[\mu_p\times C^p]\mmod).
$$
By definition $St_{in}(b)$ is given by applying that composition to the morphism $k\xrightarrow{b}k[2]$, where $k$ is the trivial $C_p$-module. We want further to set $b=t\hbar$; this corresponds to restricting along the map
$$
\mup\xrightarrow{id\times t}\mup\times C_p.
$$
Write 
$$
(id\times t)^*:D^b(k[\mup\times C_p]\mmod)\to D^b(k[ \mup]\mmod)
$$
for the corresponding restriction map. We need to show that $(id\times t)^*\circ \Delta^*\circ St(b)=0$. But actually there is an isomorphism of functors
$$
(id\times t)^*\circ\Delta^*\circ St_D\cong St_D\circ i^*
$$
where $i^*$ is the forgetful functor $D^b(k[C_p]\mmod)=D_{C_p}(*)\to D(*)$. Indeed for an object $A^\bullet$ of $D^b(k[C_p]\mmod)$, the underlying complex of both functors is $(A^\bullet)^{\otimes \mup}$, and the automorphism which sends each summand 
$$
A^{i_1}\otimes\ldots\otimes A^{i_p}
$$ 
to itself by $1\otimes \sigma^t\otimes \sigma^{2t}\otimes\ldots\otimes \sigma^{(p-1)t}$ intertwines the two actions of $\mup$. Here $\sigma$ is some generator of $\mup$. But the functor $St\circ i^*$ kills $b$, since $i^*$ does.

\section{Coulomb branch}\label{sec3}

%\subsection{Overview} 

%Let $G$ be a complex reductive algebraic group and $\bbN$ be a finite-dimensional representation of $G$. In \cite{BFN}, two algebras have been constructed out of this data

\subsection{Prelude: Frobenius-constant quantizations}\label{prel}

Let $k$ be a field of characteristic $p$ and let $\cC$ be a symmetric monoidal category over $k$. The reader may assume that $\cC$ is the category of comodules of some commutative Hopf algebra in $\SVect_k$. Let $A$ be a commutative (and associative) algebra in $\cC$. Let
$$
F:A^{(1)}\to A
$$
be the Frobenius map. Let $Q$ be an augmented commutative algebra in $\cC$ with augmentation $\epsilon:Q\to k$. Following \cite{BK} we make the following definition:

\begin{definition}\begin{enumerate}\item A $Q$\emph{-quantization} of $A$ is a flat associative $Q$-algebra $A_Q$ in $\cC$ such that $A_Q\otimes_Qk=A$.\
\item A \emph{Frobenius-constant }$Q$\emph{-quantization} of $A$ is a $Q$-quantization $A_Q$ of $A$ together with a map
$$
F_Q:A^{(1)}\to Z(A_Q)
$$
of algebras which lifts the Frobenius map, i.e. such that $\epsilon\circ F_Q\equiv F$. Here $Z(A_Q)$ denotes the center of $A_Q$.\end{enumerate}\end{definition}

The main example for us is the following. We take $K$ to be some $\bG_m$-equivariant algebraic group in $\Vect_k$, and view $\cO:=\cO(K\rtimes\bG_m)$ as a Hopf algebra in $\SVect_k$ concentrated in degree $0$. We take $\cC=\cO\comod$. Let $\hbar$ be a basis vector of the $1$-dimensional representation of $K\rtimes\bG_m$ in which $K$ acts trivially and $\bG_m$ acts with weight $2$. Let $Q=k[\hbar]$. In this case, we will call a $Q$-quantization simply an $\hbar$\emph{-quantization}, or just a \emph{quantization} if the meaning is clear.

\begin{fact}\label{frobconstfact}\begin{enumerate}\item Let $X$ be a smooth affine algebraic variety over $k$. Then the ring of asymptotic crystalline differential operators, $\cD_\hbar(X)$, is a canonical $\hbar$-quantization of $\cO(T^*X)$. Here $\bG_m$ acts trivially on $\cO(X)$ and on vector field with weight $2$. Let $\partial$ be a vector field on $X$. Then $\partial^p$ acts as a derivation on $\cO(X)$, so that $\partial^p-\partial^{[p]}$ annihilates $\cO(X)$ for a unique vector field $\partial^{[p]}$. Then $\cD_\hbar(X)$ has a canonical Frobenius-constant structure determined by
$$
\begin{matrix}F_\hbar:&x&\mapsto&x^p\hfill&x\in\cO(X)\hfill\\
&\partial&\mapsto&\partial^p-\hbar^{p-1}\partial^{[p]}&\partial\in Vect(X).\end{matrix}
$$\
\item Let $J$ be a smooth algebraic group over $k$. Then $F_\hbar$ as above is $K=J\times J$-equivariant (induced by left and right regular actions). In particular if we take invariants for the left factor, we obtain a Frobenius-constant structure for the quantization $\cU_\hbar(J)$ of $\cO(\lie(J)^*)$. \
\item Let $T$ be a complex torus and let $T^\vee$ be the Langlands dual split torus over $k$, that is:
$$
T^\vee=\spec(k[\bX_\bullet(T)])
$$
where $\bX_\bullet(T)$ is the cocharacter lattice of $T$ and $k[\bX_\bullet(T)]$ is its group algebra. We have canonical identifications
$$
\cO((\liet^\vee)^*)=\cO(\liet_k)
$$
$$
\cU_\hbar(\liet^\vee)=\cO(\liet_k\times\bG_a).
$$
If we take $\spec$ of the Frobenius-constant structure we recover the $\hbar$-Artin-Schreier map
$$
F_\hbar=AS_\hbar:\liet_k\times\bG_a\to \liet_k^{(1)}
$$
of Fact \ref{torusfact}.
\end{enumerate}\end{fact}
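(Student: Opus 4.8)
The plan is to concentrate essentially all of the work in part (1), and then deduce (2) by a canonicity-plus-invariants argument and (3) by a short computation. For part (1), first I would recall that $\cD_\hbar(X)$ is the Rees algebra of $\cD(X)$ for the order filtration: it is generated over $k[\hbar]$ by the commutative ring $\cO(X)$ in degree $0$ and by $\Vect(X)$, with $\hbar$ central and the relations $[\partial,f]=\hbar\,\partial(f)$, $[\partial_1,\partial_2]=\hbar\,[\partial_1,\partial_2]_{\mathrm{Lie}}$, where $\bG_m$ acts trivially on $\cO(X)$ and with weight $2$ on $\Vect(X)$ and on $\hbar$. A PBW basis theorem shows that $\cD_\hbar(X)$ is free over $k[\hbar]$ and that $\cD_\hbar(X)\otimes_{k[\hbar]}k=\Sym_{\cO(X)}\Vect(X)=\cO(T^*X)$, so $\cD_\hbar(X)$ is an $\hbar$-quantization. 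It then remains to check that the rule $x\mapsto x^p$ ($x\in\cO(X)$) and $\partial\mapsto\partial^p-\hbar^{p-1}\partial^{[p]}$ ($\partial\in\Vect(X)$, with $\partial^{[p]}$ the $p$-fold composite of $\partial$ as a derivation, again a derivation in characteristic $p$) extends to an algebra map $F_\hbar:\cO(T^*X)^{(1)}\to\cD_\hbar(X)$ with central image and with $\epsilon\circ F_\hbar=F$. The last point is immediate, since $\partial^p-\hbar^{p-1}\partial^{[p]}\equiv\partial^p\bmod\hbar$ and the Frobenius of $\cO(T^*X)=\Sym_{\cO(X)}\Vect(X)$ is $a\mapsto a^p$.

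For centrality I would use two standard inputs: the identity $(\operatorname{ad}a)^p=\operatorname{ad}(a^p)$, valid in any associative $\bfp$-algebra because $\operatorname{ad}a=L_a-R_a$ with $L_a,R_a$ commuting, and the restricted-Lie axioms for $\mathrm{Der}(\cO(X))$, namely $\operatorname{ad}(\partial^{[p]})=(\operatorname{ad}_{\mathrm{Lie}}\partial)^p$ and $(\lambda\partial)^{[p]}=\lambda^p\partial^{[p]}$. Since $\operatorname{ad}\partial$ acts in $\cD_\hbar(X)$ as $\hbar\,\partial(-)$ on $\cO(X)$ and as $\hbar\,\operatorname{ad}_{\mathrm{Lie}}\partial$ on $\Vect(X)$, one gets $[\partial^p,f]=(\operatorname{ad}\partial)^p(f)=\hbar^p\,\partial^{[p]}(f)=\hbar^{p-1}[\partial^{[p]},f]$ and, the same way, $[\partial^p,\partial']=\hbar^p(\operatorname{ad}_{\mathrm{Lie}}\partial)^p(\partial')=\hbar^{p-1}[\partial^{[p]},\partial']$; hence $\partial^p-\hbar^{p-1}\partial^{[p]}$ commutes with both generating sets. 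Likewise $[\partial,x^p]=p\,x^{p-1}\hbar\,\partial(x)=0$, so each $x^p$ is central. Therefore the proposed image lies in $Z(\cD_\hbar(X))$.

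Given centrality, every commutator relation of $\cO(T^*X)^{(1)}$ holds automatically for the images, so the only nonformal point is additivity of $F_\hbar$ on $\Vect(X)$, which I would settle with Jacobson's formula: in $\cD_\hbar(X)$ every bracket carries one factor of $\hbar$, so $(\partial_1+\partial_2)^p=\partial_1^p+\partial_2^p+\hbar^{p-1}\sum_{i=1}^{p-1}s_i(\partial_1,\partial_2)$, where the $s_i$ are the universal Lie polynomials (each a sum of $(p-1)$-fold nested brackets) evaluated with the Lie bracket of $\Vect(X)$; the very same polynomials govern the restricted-structure identity $(\partial_1+\partial_2)^{[p]}=\partial_1^{[p]}+\partial_2^{[p]}+\sum_i s_i(\partial_1,\partial_2)$, so the $\hbar^{p-1}s_i$ terms cancel and $(\partial_1+\partial_2)^p-\hbar^{p-1}(\partial_1+\partial_2)^{[p]}=F_\hbar(\partial_1)+F_\hbar(\partial_2)$. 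Semilinearity $F_\hbar(\lambda\partial)=\lambda^p F_\hbar(\partial)$ follows from $(\lambda\partial)^{[p]}=\lambda^p\partial^{[p]}$ and the centrality of $\lambda$; additivity on $\cO(X)$ is the Frobenius of the commutative ring $\cO(X)$; and the $\cO(X)$-module relations among the generators follow from this plus centrality of the image. This produces the desired $F_\hbar$. For part (2): the rule defining $F_\hbar$ — the $p$-th power on $\cO(J)$ and $\partial\mapsto\partial^{[p]}$ on $\mathrm{Der}(\cO(J))$ — is intrinsic to the ring $\cO(J)$, hence is preserved by every ring automorphism, in particular by the $K=J\times J$-action by left and right translation, so $F_\hbar$ is $K$-equivariant; taking $J_{\mathrm{left}}$-invariants and using $\cD_\hbar(J)^{J_{\mathrm{left}}}=\cU_\hbar(\lie(J))$, $(\cO(T^*J)^{(1)})^{J_{\mathrm{left}}}=\cO(\lie(J)^*)^{(1)}$ (via the left trivialization $T^*J\cong J\times\lie(J)^*$), and the fact that invariants of central elements are central in the invariants, one obtains the Frobenius-constant structure $\xi\mapsto\xi^p-\hbar^{p-1}\xi^{[p]}$ on $\cU_\hbar(\lie(J))$, where $\xi^{[p]}$ is the restricted structure of $\lie(J)$.

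For part (3) one specializes part (2) to $J=T^\vee$. After spelling out the identifications $\cO((\liet^\vee)^*)=\Sym(\lie(T^\vee))=\cO(\liet_k)$ and $\cU_\hbar(\liet^\vee)=\Sym(\lie(T^\vee))[\hbar]=\cO(\liet_k\times\bG_a)$ (note $\lie(T^\vee)$ is abelian, so $\cU_\hbar(\liet^\vee)$ is a polynomial ring), the only computation left is the restricted structure of $\lie(T^\vee)$: writing $T^\vee=\bG_m^n$ with Euler fields $\xi_i=t_i\partial_{t_i}$, the derivation $\xi_i\circ\cdots\circ\xi_i$ ($p$ times) sends $t_i^m$ to $m^p t_i^m=m\,t_i^m$ by Fermat, so $\xi_i^{[p]}=\xi_i$. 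Hence $F_\hbar(\xi_i)=\xi_i^p-\hbar^{p-1}\xi_i$, and extending multiplicatively identifies $F_\hbar$ with the family $AS_\hbar:\liet_k\times\bG_a\to\liet_k^{(1)}$ of Fact \ref{torusfact}, specializing to Artin--Schreier at $\hbar=1$ and to Frobenius at $\hbar=0$. The one genuinely delicate point in the whole argument is the $\hbar$-bookkeeping in part (1): checking that Jacobson's formula inside the Rees algebra produces precisely $\hbar^{p-1}s_i$ and that these cancel against $\hbar^{p-1}(\partial_1+\partial_2)^{[p]}$. Everything else is either a standard identity ($(\operatorname{ad}a)^p=\operatorname{ad}(a^p)$, the restricted-Lie axioms, PBW) or purely formal.
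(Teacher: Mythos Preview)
The paper does not supply a proof of this Fact; it is recorded as background from the theory of Frobenius-constant quantizations (cf.\ \cite{BK}) and used without argument. So there is no ``paper's own proof'' to compare against.

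Your argument is the standard one and is essentially correct. One point deserves more care than you give it. You write that ``the $\cO(X)$-module relations among the generators follow from this plus centrality of the image,'' but centrality alone does not give you $\cO(X)^{(1)}$-linearity of $\partial\mapsto\partial^p-\hbar^{p-1}\partial^{[p]}$: you need the identity
\[
(f\partial)^p-\hbar^{p-1}(f\partial)^{[p]}=f^p\bigl(\partial^p-\hbar^{p-1}\partial^{[p]}\bigr)
\]
in $\cD_\hbar(X)$, equivalently $(f\partial)^p-(f\partial)^{[p]}=f^p(\partial^p-\partial^{[p]})$ in $\cD(X)$. This is Deligne's identity (the $\cO(X)^{(1)}$-linearity of the $p$-curvature), and while standard it is not a formal consequence of what you have already written; it requires a short separate computation (e.g.\ via Hochschild's formula for $(f\partial)^p$, or by the argument in Katz's work on $p$-curvature). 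Everything else---the Rees/PBW flatness, centrality via $(\operatorname{ad}a)^p=\operatorname{ad}(a^p)$, additivity via Jacobson, the equivariance in (2), and the $\xi_i^{[p]}=\xi_i$ computation in (3)---is handled correctly.
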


\begin{remark}If a commutative algebra and its quantization contain in a natural way $H^*_G(*,k)$ for some complex reductive group $G$ with maximal torus $T$, then when searching for a Frobenius-constant structure it is natural to look for one which is compatible with the $\hbar$-Artin-Schreier map.\end{remark}

%\subsection{Recollection: Beilinson-Drinfeld Grassmannians, jet bundles etc.}

%We recall some notations of the appendix to \cite{BFN}. 

\subsection{Formal neighborhoods}Let $X$ be a smooth complex curve and let $S$ be a finite set. Given a commutative ring $R$ and an $R$-point $x$ of $X^S$, we denote the coordinates of $x$ by $x_s$ ($s\in S$), write $\Gamma(x_s)$ for the graph of $x_s$ in $X_R$ and write $I(x_s)$ for its ideal. We write $\Delta_{S}(x)$ for the formal neighborhood of the union of the graphs of $x_s$ ($s\in S$). That is, $\Delta_S(x)$ is the direct limit in affine schemes over $X$:
$$
\Delta_S(x)=\colim_i \Delta_{S,i}(x)
$$
where
$$
\Delta_{S,i}(x) = \spec\left(\cO_{X_R}\left/\prod_{s\in S}I(x_s)^i\right.\right).
$$
Given a subset $S'\subset S$ and an $R$-point $x$ of $X^S$ we will write
$$
\Delta_S^{S'}(x)
$$
for the $S'$-punctured formal neighborhood, i.e. the complement of the union of the graphs of $x_s$ ($s\in S'$) in $\Delta_S(x)$. As a sheaf of algebras on $\Delta_S(x)$, $\cO(\Delta_S^{S'}(x))$ has an exhaustive increasing filtration:
$$
F^j\cO(\Delta_S^{S'}(x)) = \cO(\Delta_S(x)).\prod_{s\in S'}I(x_s)^{-j}.
$$
Suppose we have $S''\subset S'\subset S$ and $x\in X^S(R)$. The inclusion $S'\subset S$ defines a projection $f:X^S\to X^{S'}$, and we will occasionally write
$$
\Delta_{S'}^{S''}(x)
$$
for $\Delta_{S'}^{S''}(f(x))$. We have a closed embedding $\Delta_{S'}^{S''}(x)\to \Delta_S^{S''}(x)$. Note however that this is in a sense non-uniform in $x$: for instance if for every $s\in S$ there exists an $s'\in S'$ such that $x_s=x_{s'}$, then the embedding is an isomorphism; and conversely. This is essentially the fact underlying Beilinson-Drinfeld's `fusion' Grassmannian \cite{BD}. We will make more of this when we discuss co-placid morphisms, see Example \ref{RT}.\

For notational simplicity, we frequently remove commas and braces from $S$, $S'$, and also drop the part $(x)$, when it is clear which point we refer to. So for example the expression:
$$
\Delta_{\{1,2\}}^{\{1\}}(x)
$$
becomes:
$$
\Delta_{12}^1.
$$

\subsection{Global groups; pro-smoothness}\label{prosaic}Now fix an affine algebraic group $G$ over $\bC$. Consider the following functor from commutative rings to groups over $X^S$:
$$
G_{S}(R):=\{(x,f)|x\in X^S(R),f:\Delta_S(x)\to G\}.
$$
Then $G_{S}$ is represented by the limit of an inverse system of smooth affine group schemes over $X^S$:
$$
G_{S}=\nlim_i  G_{S,i}
$$
such that each transition morphism is a smooth homomorphism. Here $G_{S,i}$ may be taken to represent the functor
$$
G_{S,i}(R)=\{(x,f)|x\in X^S(R),f:\Delta_{S,i}(x)\to G\}.
$$
Later, the notation $G_{S,i}$ may represent a piece of some other cofiltered system presenting $G_S$; we will refer to the specific group above by $\Map(\Delta_{S,i},G)$.  The fact that each transition morphism is smooth is directly verified using the valuative criterion. Indeed let $\spec(\widetilde{R})$ be a square-zero thickening of $\spec(R)$. A commutative diagram
$$
\begin{matrix}\spec(R)&\to & G_{S,i+1}\\
\downarrow&&\downarrow\\
\spec(\widetilde{R})&\to& G_{S,i}\end{matrix}
$$
is the same thing as a point $\widetilde{x}\in X^S(\widetilde{R})$, with residue $x\in X^S(R)$, and a commutative diagram
$$
\begin{matrix}\Delta_{S,i}(x)&\to& \Delta_{S,i+1}(x)\\
\downarrow&&\downarrow\\
\Delta_{S,i}(\widetilde{x})&\to& G.\end{matrix}
$$
This determines a morphism $P\to G$ where $P$ is the appropriate pushout in affine schemes. Since $\Delta_{S,i}(x)$ is equal to the intersection of $\Delta_{S,i+1}(x)$ with $\Delta_{S,i}(\widetilde{x})$ inside $\Delta_{S,i+1}(\widetilde{x})$, and $\Delta_{S,i+1}(\widetilde{x})$ is a square-zero thickening of $\Delta_{S,i+1}({x})$, it follows that $\Delta_{S,i+1}(\widetilde{x})$ is a square-zero thickening of $P$. Therefore since $G$ is smooth we can extend $P\to G$ to $\Delta_{S,i+1}(\widetilde{x})\to G$, as required. Note that $G_{S,0}=X^S$ so in particular each $G_{S,i}$ is smooth over $X^S$.

Now fix $x\in X^S(\bC)$. It partitions $S$ into subsets $S_1,\ldots, S_n$ according to coincidence amongst the coordinates. Write $y_m$ for the coordinate $x_s$ for any $s\in S_m$, and $z_m$ for the $\bC$-point of $X^{S_m}$ with coordinates $y_m$. We have
$$
\begin{matrix}
\Delta_{S,i}(x)&=&\spec\left(\cO_X\left/\prod_{m=1}^nI(y_m)^{i|S_m|}\right.\right)\\
&=&\coprod_{m=1}^n\spec\left(\cO_X\left/I(y_m)^{i|S_m|}\right.\right).\end{matrix}
$$
Therefore we have
$$
G_{S,i}\times_{X^S}\{x\} = \prod_{m=1}^{n}G_{S_m,i}\times_{X^{S_m}}\{z_m\} = \prod_{m=1}^{n}G_{\{m\},i|S_m|}\times_{X^{\{m\}}}\{y_m\}.
$$
The smooth transition map $G_{\{m\},(i+1)|S_m|}\times_{X^{\{m\}}}\{y_m\}\to G_{\{m\},i|S_m|}\times_{X^{\{m\}}}\{y_m\}$ is surjective for all $i\geq0$ and has a unipotent kernel for all $i\geq1$. It follows that $G_{S,i+1}\to G_{S,i}$ has the same property. Thus $G_S$ is a \emph{prosaic} affine group scheme over $X^S$ in the following sense:
\begin{definition}\begin{enumerate}%\item Let $\cG$ be a groupoid object over some base $B$ in some category $\cC$ with fiber products. Let $P$ be a property of morphisms in $\cC$. Then $\cG$ is said `to be $P$ over $B$' if \emph{both} structure maps $\cG\to B$ are $P$.\
%\item Let $\cG$ be an affine groupoid scheme over some base $B$ of finite type over $\bC$. Then $\cG$ is said to be \emph{smooth} over $B$ if \emph{both} structure maps $\cG\to B$ are smooth; $\cG$ is a s\
\item A scheme $T$ over $B$ is said to be \emph{pro-smooth} over $B$ if it can be written as the limit of a inverse system of schemes $T_i$ smooth over $B$ and with smooth transition morphisms. If $T$ is pro-smooth then it is formally smooth (in particular flat) over $T$.\
%\item The same $\cG$ is said to be \emph{pro-smooth} over $B$ if it can be written as the limit of an inverse system of smooth affine groupoid schemes $\cG_i$ over $B$ and with smooth transition homomorphisms. If $\cG$ is pro-smooth then it is formally smooth over $B$.\
\item An affine groupoid scheme $\cG$ over $B$ is \emph{pro-smooth} over $B$ if it can be written as the limit of an inverse system of affine groupoid schemes $\cG_i$ over $B$ whose structure maps to $B$ are both smooth, and which has smooth transition homomorphisms\footnote{Is this the same thing as an affine groupoid scheme $\cG$ over $B$ such that both structure morphisms $\cG\to B$ are pro-smooth in the sense of (2)?}.\
\item In (2) and (3) we can upgrade to the property of being \emph{a pro-smooth cover} by demanding that each transition map and structure map is a smooth cover.\
\item Let the affine groupoid scheme $\cG=\lim_{i\in\bZ_{\geq0}^{op}}\cG_i$ over $B$ be a pro-smooth cover. Then each $\cG_i$ is the fpqc quotient over $B$ of $\cG$ by some pro-smooth affine subgroup $K_i$. We say that $\cG$ is \emph{prosaic} if the $K_i$ can be chosen to be also pro-unipotent.\
\item Let $\cG$ be an affine group scheme over the same base $B$. Then $\cG$ is said to be \emph{pro-smooth}, \emph{a pro-smooth cover}, \emph{prosaic} over $B$ if it is so when regarded as a groupoid. \end{enumerate}\end{definition}

From now on, `groupoid' will mean `affine pro-smooth covering groupoid', unless it is clear from the context that this is not the case. All examples of groupoids will actually be prosaic.

\begin{remark}Recall the construction of $G_S$. If the affine algebraic group $G$ is replaced by an arbitrary smooth affine variety $T$ over $\bC$, we get a pro-smooth affine variety $T_S$ over $X^S$ in exactly the same way.\end{remark}

\subsection{Beilinson-Drinfeld Grassmannians; reasonableness}We also consider the functor
$$
G_{S}^{S'}(R):=\{(x,f)|x\in X^S(R), f:\Delta_S^{S'}\to G\}.
$$
Then $G_{S}^{S'}$ is represented by an ind-affine ind-scheme, formally smooth over $X^S$. It is a group in ind-schemes over $X^S$, but not an inductive limit of group schemes. It is a \emph{reasonable} ind-scheme in the following sense (taken from \cite{D}): 

%\begin{definition}\begin{enumerate}%\item A morphism of schemes is \emph{reasonable} if it is of finite presentation. For example, a reasonable closed embedding is a closed embedding with a finitely generated ideal sheaf. \
%\item An ind-scheme $T$ is \emph{reasonable} if it admits a \emph{reasonable presentation}, that is an expression
%$$
%T=\colim_{j\in\cJ}T^j
%$$
%where $\cJ$ is some (countable) filtered indexing category, and the transition morphisms in the filtered system of schemes $(T^j)_{j\in\cJ}$ are all finitely presented closed embeddings\footnote{That is, they have finitely generated ideal sheaves}. Note that any two reasonable presentations admit a common refinement, so that the category of reasonable presentations of $T$ is filtered.\
%\item A morphism of ind-schemes is \emph{reasonable} if it is ind-finitely presented, i.e. its pullback along any closed subscheme of the target is finitely presented. \
%\item A closed subscheme of an ind-scheme is \emph{reasonable} if the inclusion map is reasonable as a map of ind-schemes.\end{enumerate}\end{definition}

\begin{definition}\begin{enumerate}\item An ind-scheme $T$ is \emph{reasonable} if it admits a \emph{reasonable presentation}, that is an expression
$$
T=\colim_{j\in\cJ}T^j
$$
where $\cJ$ is some (countable) filtered indexing category, and the transition morphisms in the filtered system of schemes $(T^j)_{j\in\cJ}$ are all finitely presented (f.p.) closed embeddings\footnote{That is, they have finitely generated ideal sheaves.}. Note that any two reasonable presentations admit a common refinement, so that the category of reasonable presentations of $T$ is filtered.\
\item A closed subscheme of a reasonable ind-scheme $T$ is \emph{reasonable} if it is a term in some reasonable presentation of $T$.\
\item A morphism $U\to T$ of reasonable ind-schemes is \emph{co-reasonable} if for some, equivalently any, reasonable presentation $T=\colim_{j\in\cJ}T^j$ of $T$, the presentation $U=\colim_{j\in\cJ}U\times_TT^j$ of $U$ as an ind-scheme is reasonable. Warning: this is not a relative version of reasonableness for ind-schemes.\end{enumerate}\end{definition}

%\begin{definition}\begin{enumerate}\item A morphism of schemes is \emph{reasonable} if it is a closed embedding with a finitely generated ideal sheaf.\
%\item An ind-scheme $T$ is \emph{reasonable} if it admits a \emph{reasonable presentation}, that is an expression
%$$
%T=\colim_{j\in\cJ}T^j
%$$
%where $\cJ$ is some (countable) filtered indexing category, and the transition morphisms in the filtered system of schemes $(T^j)_{j\in\cJ}$ are all reasonable. Note that any two reasonable presentations admit a common refinement, so that the category of reasonable presentations of $T$ is filtered.\
%\item A morphism of ind-schemes is \emph{reasonable} if its pullback along any closed subscheme of the target is reasonable.\
%\item A closed subscheme of an ind-scheme is \emph{reasonable} if the inclusion map is reasonable as a map of ind-schemes.\end{enumerate}\end{definition}

\begin{example}\begin{enumerate}\item Let $T$ be a reasonable ind-scheme and let $U\to T$ be either ind-f.p. or an ind-flat cover. Then $U\to T$ is co-reasonable.\
\item In the case of $G_S^{S'}$, one reasonable presentation is given as follows. Fix a finite set $\{a_1,\ldots,a_n\}$ of generators of $\cO(G)$. Then set $\cJ=\bZ_{\geq0}$ and set $G_S^{S',j}$ to be the closed subscheme of $G_S^{S'}$ which on the level of $R$-points is given by
$$
G_S^{S',j}(R) = \{(x,f)|x\in X^S(R), f:\Delta_S^{S'}\to G,a_k\circ f\in H^0 F^j\cO(\Delta_S^{S'})\}.
$$
Here we have taken $G_S^{S',0}=G_S$. The left- and right-regular actions of the subgroup $G_S$ preserve the inductive structure, meaning that each $G_S^{S',j}$ has a free action on both sides by $G_S$ over $X^S$, even though it is not itself a group. Moreover the fpqc quotient $G_S^{S',j}/G_S$ is of finite-type over $X^S$, and flat, although generally quite singular. The result is that the fpqc quotient
$$
G_S^{S'}/G_S
$$
has the structure of ind-finite-type ind-flat ind-scheme over $X^S$. In particular, it is reasonable, and $G_S^{S'}\to G_S$ is an ind-flat cover and thus co-reasonable.\end{enumerate}\end{example}

On $R$-points, we may identify
$$
G_S^{S'}/G_S(R) = \left\{(x,\cE,f)  \left|~\begin{matrix}   x\in X^S(R)\hfill\\
\cE\text{ a principal }G\text{-bundle over }\Delta_S(x)\hfill\\
 f\text{ a trivialization of }\cE\text{ over }\Delta_S^{S'}(x)\hfill\end{matrix}\right.\right\}/\sim.
$$
Here the symbol `$/\sim$' means `taken up to isomorphism', i.e. we identify two $R$-points $(x,\cE,f),(x',\cE',f')$ if $x=x'$ and there exists an isomorphism of $\cE$ with $\cE'$ which intertwines $f,f'$. Such an isomorphism is unique if it exists. The following fact is due to \cite{BD}:

\begin{lem}\begin{enumerate}\item $G_S^{S'}/G_S$ is ind-projective over $X^S$ if and only if $G$ is reductive.\
\item $G_S^{S'}/G_S$ is ind-reduced if and only if $G$ has no non-trivial characters.\end{enumerate}\end{lem}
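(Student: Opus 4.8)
The plan is to reduce both statements to the case $S' = S$ and then to the local statement about the affine Grassmannian $Gr_G = G((t))/G[[t]]$, which is the fiber of $G_S^{S'}/G_S \to X^S$ over a point where all the coordinates $x_s$ ($s \in S'$) coincide (and the remaining coordinates, indexed by $S \setminus S'$, are generic, contributing only a pro-smooth factor that does not affect projectivity or reducedness). Since $G_S^{S'}/G_S$ is of ind-finite type and ind-flat over $X^S$, and $X^S$ is a smooth (hence reduced) connected base, ind-projectivity over $X^S$ and ind-reducedness can be checked fiberwise by the usual semicontinuity and flatness arguments: a proper flat family with one projective fiber is projective (properness is fiberwise-checkable by the valuative criterion, and an ind-closed subscheme of a product with projective space over $X^S$ is ind-projective), and a flat family over a reduced base with generic reduced fiber is ind-reduced. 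The fusion phenomenon mentioned earlier — that over the locus where coordinates collide the relevant formal disk becomes a single disk — shows that every fiber is, up to the pro-smooth fibering, a product of copies of $Gr_G$, so it suffices to treat $Gr_G$.

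For part (1): one implication is the classical Beauville–Laszlo / Beilinson–Drinfeld result that $Gr_G$ is ind-projective when $G$ is reductive; the standard argument is to use a closed embedding $G \hookrightarrow GL_n$ (which exists since $G$ is affine, and is a closed embedding since $G$ is reductive, so that $GL_n/G$ is affine), reducing to $Gr_{GL_n}$, which one presents explicitly as an increasing union of projective varieties of lattices $L$ with $t^N L_0 \subset L \subset t^{-N} L_0$, each a closed subscheme of a product of ordinary Grassmannians. Conversely, if $G$ is not reductive it has a nontrivial normal unipotent subgroup, or more simply $G$ surjects onto $\bG_a$ or $\bG_m$ with non-proper kernel-behavior; one exhibits a curve (e.g. the orbit of a point under a one-parameter subgroup coming from a non-reductive direction, such as the $\bG_a$ of the affine Grassmannian $Gr_{\bG_a} \cong \bA^\infty$, which is ind-affine and not ind-projective) inside $Gr_G$ along which a would-be ample class is constant but the subscheme is not proper. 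Concretely, $Gr_{\bG_a}$ and $Gr_{\bG_m}$ are the clean test cases: $Gr_{\bG_m} \cong \bZ$ (discrete, but each point a reduced point — so this is fine for ind-projectivity, being a disjoint union of points!), whereas $Gr_{\bG_a}$ is a genuine non-ind-proper ind-scheme, and for general non-reductive $G$ one maps $\bG_a \to G$ to produce non-properness. I expect the main obstacle here to be the converse direction: cleanly extracting from "$G$ not reductive" a concrete non-proper curve or subscheme in $Gr_G$ that obstructs ind-projectivity, rather than just quoting it.

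For part (2): the $R$-points description identifies $G_S^{S'}/G_S$ with a moduli of $G$-bundles on a formal disk trivialized on the punctured disk; the point is that the tangent-obstruction and deformation theory of this moduli, or more concretely the lattice-model presentation, is reduced precisely when $G$ has no characters. If $G$ has a nontrivial character $\chi: G \to \bG_m$, then $\chi$ induces a map $Gr_G \to Gr_{\bG_m} = \bZ$, but the non-reducedness comes from a different source: the classical fact (Beilinson–Drinfeld, also Faltings) that for $G$ with a character the $Gr$ acquires nilpotents, detectable already for $G = \bG_m$ where $Gr_{\bG_m}$, as a functor, is $\bZ$ but its natural ind-scheme structure from the presentation $G_S^{S'}$ can carry nilpotents coming from $H^1$ of the punctured disk with coefficients in the character — one sees this by computing $\cO(G_S^{S',j})$ explicitly for $\bG_m$ and exhibiting a nilpotent. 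Conversely, when $G$ has no characters, $\cO(G)$ is generated by functions each of which, pulled back, lies in a suitable $F^j$, and the semisimplicity/absence of characters forces the defining equations of each $G_S^{S',j}$ to cut out a reduced scheme; the cleanest route is again to reduce to $SL_n$ (no characters) where the lattice model is manifestly reduced, and then handle the general no-character case via the fact that $G$ is, up to isogeny and central torus, built from $SL_n$-type pieces — but since $G$ has no characters, the central torus is anisotropic, i.e. trivial. Here the main obstacle is making the "$G$ has a character $\Rightarrow$ nilpotents in $Gr_G$" direction precise: one must pin down exactly which presentation/ind-scheme structure on $Gr_G$ is meant (the one from $G_S^{S'}/G_S$) and exhibit the nilpotent in its ring of functions, which is a direct but slightly delicate computation with $\cO(G)[\![t]\!][t^{-1}]$-valued points. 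I would carry this out by first doing $G = \bG_m$ by hand, then bootstrapping via the character lattice.
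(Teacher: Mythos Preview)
The paper does not prove this lemma. It is introduced with the sentence ``The following fact is due to \cite{BD}'' and is stated without argument; the subsequent remark even notes that part (2) ``appears merely for interest's sake.'' So there is no proof in the paper to compare your proposal against.

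As for your sketch on its own terms: the forward direction of (1) (reductive $\Rightarrow$ ind-projective) via embedding into $GL_n$ and the lattice model is the standard argument and is fine. The converse of (1) is where you have a genuine gap: mapping $\bG_a$ into the unipotent radical of $G$ does give a map $Gr_{\bG_a}\to Gr_G$, but you need this to be a closed immersion (or at least proper onto a non-proper image) to conclude, and that requires an argument you have not supplied. Your discussion of (2) is more confused: the remark that non-reducedness for $\bG_m$ is ``detectable already for $G=\bG_m$'' is correct, and the computation with $R=k[\epsilon]/(\epsilon^2)$ and elements like $1+\epsilon t^{-1}\in R((t))^\times$ is the right way to exhibit a nilpotent, but your invocation of ``$H^1$ of the punctured disk with coefficients in the character'' is not the mechanism. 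For the other implication of (2), the claim that ``since $G$ has no characters, the central torus is \ldots\ trivial'' presupposes $G$ reductive, which the lemma does not, so your reduction to $SL_n$-type pieces does not cover the general case. If you want an honest proof, the cleanest route is simply to follow Beilinson--Drinfeld directly.
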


\begin{remark}Ultimately we are concerned only with the analytifications of these ind-schemes, so point (2) appears merely for interest's sake. But point (1) is crucial for the definition of convolution in Borel-Moore homology.\end{remark}

We may re-identify the $R$-points of $G_S^{S'}$ in a way more compatible with the above identification of $G_S^{S'}/G_S(R)$:
$$
G_S^{S'}(R) = \left\{(x,\cE,f,g)  \left|~\begin{matrix}   x\in X^S(R)\hfill\\
\cE\text{ a principal }G\text{-bundle over }\Delta_S(x)\hfill\\
 f\text{ a trivialization of }\cE\text{ over }\Delta_S^{S'}(x)\hfill\\
 g\text{ a trivialization of }\cE\text{ over }\Delta_S(x)\end{matrix}\right.\right\}/\sim.
$$

Notice that the inclusion $S'\subset S$ induces a closed embedding $\Delta_{S'}^{S''}\to \Delta_S^{S''}$ for any $S''\subset S'$. This in turn induces restriction homomorphisms
$$
G_S^{S''}\to G_{S'}^{S''}.
$$
These maps are co-reasonable. One readily checks by looking at points that the induced maps
$$
G_S^{S''}/G_S\to X^S\times_{S'}G_{S'}^{S''}/G_{S'}
$$
are isomorphisms. In particular we have $G_S^{S''}/G_S\xrightarrow{\sim} X^S\times_{S''}G_{S''}^{S''}/G_{S''}$.

%\begin{lem}\label{proper}\begin{enumerate}\item $G_S^{S'}/G_S$ is ind-projective if and only if $G$ is reductive.\
%\item $G_S^{S'}/G_S$ is reduced if and only if $G$ has non no-trivial characters.\end{enumerate}\end{lem}

\begin{remark}$G_{S}^{S}/G_S$ is known as the Beilinson-Drinfeld grassmannian $Gr_S$ (on $|S|$ points). In particular the fibers of $G_S^{S'}/G_S$ over $X^S$ are products of copies of the ordinary affine Grassmannian $Gr_G$ of $G$.\end{remark}

\subsection{Jet bundles; placidity} We will use the following notion, due to Raskin \cite{Rask}:

\begin{definition} \begin{enumerate}\item A scheme $T$ is called \emph{placid} if it admits a \emph{placid presentation}, that is, an expression
$$
T = \nlim_{i\in\cI^{op}}(T_i)
$$
for some filtered (countable) indexing category $\cI$, such that each $T_i$ is of finite type over $\bC$ and each transition morphism $T_i\to T_{i'}$ is a smooth affine covering. We will denote this placid presentation by $T_{\cI}$.\
\item An ind-scheme $T$ is called \emph{placid} if it admits a \emph{placid presentation}, that is, an expression
$$
T = \colim_{j\in\cJ}\nlim_{i\in (\cI^j)^{op}}(T^j_i).
$$
Here $\cJ,\cI^j$ are filtered (countable) indexing categories, $T^j_{\cI^j}$ is a placid presentation of its limit scheme $T^j:=\nlim_{i\in (\cI^j)^{op}}(T^j_i)$, and the transition morphisms $T^j\to T^{j'}$ are ind-f.p. closed embeddings.\
\end{enumerate}\end{definition}

\begin{remark}\label{flexy}\begin{enumerate} \item If the placid ind-scheme $T$ maps to some base $B$ of finite type over $\bC$, then the placid presentation may be taken over $B$.\
\item Let $T$ be a placid ind-scheme and let $T=\colim_{j\in\cJ}(T_j)$ be a reasonable presentation of $T$. Then each $T_j$ is a placid scheme, so that this reasonable presentation can be extended to a placid presentation.\
\item Any two placid presentations of the placid ind-scheme $T$ admit a common refinement (which is again a placid presentation). Thus the collection of placid presentations of $T$ forms a filtered category $\cP(T)$.

\item Suppose that $T$ is a placid ind-scheme and $U$ is an ind-scheme with an ind-f.p. map $f:U\to T$. Then $U$ is automatically placid. The short explanation is `by Noetherian approximation'. We spell it out: given any reasonable presentation
$$
T=\colim_{j\in\cJ}(T^j)
$$
of $T$, we set $U_j:=U\times_TT_j$ and obtain the reasonable presentation
$$
U=\colim_{j\in\cJ}(U^j)
$$
of $U$. Then, given any placid presentation
$$
T^j=\nlim_{i\in(\cI^j)^{op}}T^j_i
$$
of $T^j$, there exists some index $a$ of $\cI^j$ such that there is a $T^j_a$-scheme $U^j_a$ fitting into a Cartesian diagram
$$
\begin{matrix}U^j&\to&T^j\\
\downarrow&&\downarrow\\
U^j_a&\to&T^j_a.
\end{matrix}
$$
Moreover since $T^j\to T^j_a$ is a covering, the choice of $T^j_a$-scheme $U^j_a$ is unique. Thus if we replace $\cI^j$ by its final subcategory based at $a$, we can present $U^j\to T^j$ as the limit of a cofiltered system of f.p. morphisms
$$
(U^j_i\to T^j_i)_{i\in(\cI^j)^{op}}
$$
such that for each $i\to i'$ in $\cI$, the square
$$
\begin{matrix} U^j_{i'} & \to & T^j_{i'}\\
\downarrow && \downarrow\\
U^j_i & \to & T^j_i\end{matrix}
$$
is Cartesian. Placid presentations of this form will be called \emph{Cartesian}. \
\item The product (over $B$) of placid ind-schemes is placid.\
%\item Any reasonable closed sub-ind-scheme $T^0$ of a placid scheme $T^1$ is automatically placid. Indeed given any placid presentation $T^1_{\cI^1}$ of $T^1$, there exists some final (therefore filtered) subcategory $\cI^0$ of $\cI^1$ such that for all $i\in\cI^0$, $T^0$ is equal to the preimage of its image under the projection $T^1\to T^1_{i}$. Then writing $T^0_{i}$ for that image, we get a placid presentation
%$$
%T^0=\nlim_{i\in(\cI^0)^{op}}T^0_{i}
%$$
%of $T^0$. By replacing $\cI^1$ by $\cI^0$, this shows that given any reasonable closed sub-ind-scheme $T^0$ of a placid ind-scheme $T^1$ we can choose a common indexing set $\cI$ so that 
%$$
%T^0=\nlim_{i\in(\cI)^{op}}T^0_{i}
%$$
%and
%$$
%T^1=\nlim_{i\in(\cI)^{op}}T^1_{i}
%$$
%and the closed embedding $T^0\to T^1$ is the limit of closed embeddings $T^0_i\to T^1_i$, which fit into Cartesian squares
%$$
%\begin{matrix} T^0_{i'} & \to & T^1_{i'}\\
%\downarrow && \downarrow\\
%T^0_i & \to & T^1_i\end{matrix}
%$$
%whenever $i\to i'$ in $\cI$. Placid presentations of this form will be called \emph{Cartesian}. \

\item Consider placid presentations
$$
T=\colim_{j\in \bZ_{\geq0}}\nlim_{i\in ( \bZ_{\geq j})^{op}}(T^j_i)
$$
of $T$ with the property that $T^j_i$ is formed out of $T^{j'}_i$ as in point (4) of this Remark whenever $i\geq j'\geq j$. We call such placid presentations \emph{neat}. This is possibly a technically useless notion. But every placid presentation admits a refinement which is neat up to replacing the indexing categories $\cJ,\cI^j$ by final subcategories. Thus many constructions on placid ind-schemes can be phrased in terms of neat presentations. \end{enumerate}\end{remark}

Note that for any morphism $f:U\to T$ of placid schemes and any placid presentations $U=\nlim_{i\cI_U^{op}}U_i$, $T=\nlim_{i\cI_T^{op}}T_i$, then for any $i\in\cI_T$ there exists $i'\in\cI_U$ and a unique map $U_{i'}\to T_i$ making the square
$$
\begin{matrix}U&\to&T\\
\downarrow&&\downarrow\\
U_{i'}&\to&T_i\end{matrix}
$$
commutative. Thus, by changing the indexing sets appropriately we can choose placid presentations of $U$, $T$ with a common indexing set $\cI$ and write $f=\nlim_{i\in\cI}(U_i\to T_i)$. Such a presentation will be called \emph{compatible}. This notion extends immediately to morphisms of placid ind-schemes. A Cartesian presentation is a compatible presentation in which all appropriate squares are Cartesian. If $\cG$ is an affine groupoid scheme over some base $B$ of finite type over $\bC$ and $f$ is $\cG$-equivariant (over $B$), then we can find a $\cG$-equivariant compatible presentation. If in addition $f$ is f.p. so that it admits a Cartesian presentation, then this can also be chosen to be $\cG$-equivariant.\

The following definition is due to \cite{Rask}\footnote{In \emph{loc. cit.} a co-placid map of placid ind-schemes is called simply `placid'. As noted in \emph{loc. cit.}, it is \emph{not} a relative version of placidity for ind-schemes. This is the reason for the present renaming.}.

\begin{definition}\begin{enumerate}\item A morphism $f:U\to T$ between placid schemes is called \emph{co-placid} if for some, equivalently every, pair of placid presentations $U_{\cI_U}$, $T_{\cI_T}$ of $U$, $T$ and for every index $i\in \cI_T$, then for some, equivalently every, index $i'\in\cI_U$ such that we have a commutative square:
$$
\begin{matrix}U&\xrightarrow{f}&T\\
\downarrow&&\downarrow\\
U_{i'}&\xrightarrow{f'}&T_i\end{matrix},
$$
the morphism $f'$ is a smooth covering.\ %The square does not have to be Cartesian.\
\item A morphism $f:U\to T$ between placid ind-schemes is called \emph{placid} if it is co-reasonable and for some, equivalently every, reasonable presentation $T=\colim_{j\in\cJ}T^j$ of $T$, the map of placid schemes $U\times_{T}T^j\to T^j$ is co-placid.\
%\item We can write any co-placid map of placid schemes $f:U\to T$ as the limit of a cofiltered system of smooth coverings
%$$
%(U_i\to T_i)_{i\in\cI^{op}}
%$$
%whose transition maps are smooth affine coverings. Such a presentation will be called \emph{compatible}\footnote{Compare with the notion of `Cartesian' presentations for ind-f.p. maps.}. If $\cG$ is an affine groupoid scheme over some base $B$ of finite type over $\bC$ and $f$ is $\cG$-equivariant (over $B$), then we can find a $\cG$-equivariant compatible presentation.
\end{enumerate}\end{definition}

\begin{example}\label{RT}\begin{enumerate}\item Let $T$ be a placid ind-scheme. Let $U\to T$ be either ind-smooth or an ind-pro-smooth cover. Then it is co-placid.\
\item $G_S^{S'}$ is a placid ind-scheme, and the fpqc quotient map
$$
G_S^{S'}\to G_S^{S'}/G_S
$$
is an ind-pro-smooth ind-affine affine cover of a placid (indeed, ind-finite type) ind-scheme, so is co-placid.\
% We have already explained how to give a reasonable presentation
%$$
%G_S^{S'}=\colim_{j\in\bZ_{\geq0}}G_S^{S',j}
%$$
%such that $G_S=G_S^{S',0}$ acts freely on both the left and the right of each piece $G_S^{S',j}$. Then, we can take $\cI^j=\bZ_{\geq0}$ for all $j$ and set $G_{S,i}^{S',j}$ to be the fpqc quotient
%$$
%G_{S,i}^{S',j}:= G_S^{S',j}/\ker(G_S\to G_{S,i}).
%$$\
\item Given $S''\subset S'\subset S$, the morphism
$$
f:G_S^{S''}\to X^S\times_{X^{S'}}G_{S'}^{S''}
$$
is co-placid. Indeed, consider $G_{S,i}:=\Map(\Delta_{S,i},G)$. We have $G_S=\nlim_{i\in\bZ_{\geq0}}G_{S,i}$ and $X^S\times_{X^{S'}}G_{S'}=\nlim_{i\in\bZ_{\geq0}}X^S\times_{X^{S'}}G_{S,i}$ and the morphism
$$
G_{S,i}\to X^{S}\times_{X^{S'}}G_{S',i}
$$
induced by the closed embeddings $\Delta_{S',i}(x)\subset\Delta_{S,i}(x)$ for any $x\in X^S(R)$. This is a smooth covering, by the same argument of Subsection \ref{prosaic} for the prosaicness of $G_S$. This shows that 
$$
g:G_S\to X^S\times_{X^{S'}}G_{S'}
$$
is co-placid. To conclude, note that the morphism $f$ is an ind-locally trivial $g$-bundle over the ind-finite type ind-scheme $G_S^{S''}/G_S=X^S\times_{X^{S'}}G_{S'}^{S''}/G_{S'}$.

\begin{warning} Morphism $g$ is \emph{not} a pro-smooth cover\footnote{Correspondingly, $f$ is \emph{not} an ind-pro-smooth cover.}. It is tempting to imagine that it is the quotient map by some group scheme $\ker(G_S\to X^S\times_{X^{S'}}G_{S'})$, but there is no such group scheme. To see this, fix some section $S\to S'$ and consider the corresponding `multi-diagonal' embedding $X^{S'}\to X^S$. Then $g$ is an isomorphism over $X^{S'}$. However, over a generic point of $X^S$, $g$ is a non-trivial projection from $G(\cO)^S\to G(\cO)^{S'}$, see Subsection \ref{notrem} for the notation.\end{warning}

\item Fix a representation $N$ of $G$ of dimension $d$. Let $\bbN:=\spec(\Sym(N^*))$ be the corresponding $G$-module, i.e. vector space in the category of schemes over $\bC$ with $G$-action. Then $\bbN_S$ is a $G_S$-module\footnote{Indeed, each $\Map(\Delta_{S,i},\bbN)$ is a $\Map(\Delta_{S,i},G)$-module and the transition maps are $G_S$-equivariant.}. We have the placid ind-scheme
$$
\widetilde{\cT}_S^{S'}:=G_S^{S'}\times_{X^S}\bbN_S.
$$
This is an ind-pro-smooth covering of its fpqc quotient (relative to $X^S$) ind-scheme
$$
\cT_S^{S'}:=G_S^{S'}\frac{\times_{X^S}}{^{G_S}}\bbN_S.
$$
This is an infinite-dimensional vector bundle over $G_S^{S'}/G_S$. On the level of $R$-points we identify
$$
\cT_S^{S'}(R)=\left\{(x,\cE,f,\widetilde{v})  \left|~\begin{matrix}   x\in X^S(R)\hfill\\
\cE\text{ a principal }G\text{-bundle over }\Delta_S(x)\hfill\\
 f\text{ a trivialization of }\cE\text{ over }\Delta_S^{S'}(x)\hfill\\
\widetilde{v}\text{ an }\bbN\text{-section of }\cE\hfill\end{matrix}\right.\right\}/\sim.
$$
Here by `an $\bbN$-section of $\cE$' we mean a section of the associated $\bbN$-bundle. Of course $\cT_S^{S'}$ is the inverse limit of vector bundles over $G_S^{S'}/G_S$:
$$
\cT_S^{S'}=\nlim_i \cT_{S,i}^{S'}
$$
where $\cT_{S,i}^{S'}$ is the associated bundle of $\bbN_{S,i}$, a vector bundle of rank $di|S|$. In particular, $\cT_S^{S'}$ is a placid ind-scheme with $\cT_S^{S',j}$ being the infinite-dimensional vector bundle $\cT_S^{S'}|_{G_S^{S',j}/G_S}$ over $G_S^{S',j}/G_S$. We identify the $R$-points of $\widetilde{\cT}_S^{S'}$ compatibly as follows:
$$
\widetilde{\cT}_S^{S'}(R)=\left\{(x,\cE,f,g,\widetilde{v})  \left|~\begin{matrix}   x\in X^S(R)\hfill\\
\cE\text{ a principal }G\text{-bundle over }\Delta_S(x)\hfill\\
 f\text{ a trivialization of }\cE\text{ over }\Delta_S^{S'}(x)\hfill\\
g\text{ a trivialization of }\cE\text{ over }\Delta_S(x)\hfill\\
\widetilde{v}\text{ an }\bbN\text{-section of }\cE\hfill\end{matrix}\right.\right\}/\sim.
$$\

\item $\bbN_S^{S'}$ is a $G_S^{S'}$-module (in ind-schemes). Therefore multiplication gives a map between the placid ind-schemes
$$
\cT_S^{S'}\to\bbN_S^{S'}.
$$
We define $\cR_S^{S'}$ to be the fiber product: 
$$
\cR_S^{S'}:=\cT_S^{S'}\times_{\bbN_S^{S'}}\bbN_S.
$$
This is an ind-scheme over $G_S^{S'}/G_S$, with $\cR_S^{S',j}:=\cT_S^{S',j}\times_{\bbN_S^{S'}}\bbN_S = \cR_S^{S'}|_{G_S^{S',j}/G_S}$. Moreover $\cR_S^{S'}$ is a \emph{vector space} over $G_S^{S'}/G_S$, but unlike $\cT_S^{S'}$ it is not a vector bundle because the fibers jump. Furthermore, $\cR_S^{S',j}$ contains $\ker(\cT_{S}^{S',j}\to \cT_{S,i}^{S',j})$ for $i$ large enough (depending on $j$), that is we have a diagram
$$
\ker(\cT_{S}^{S',j}\to \cT_{S,i}^{S',j})\subset \cR_S^{S',j}\subset \cT_S^{S',j}
$$
of vector spaces over $G_S^{S',j}/G_S$. Therefore $\cR_S^{S'}$ is placid: we may take $\cR_{S,i}^{S',j}$ to be the image in $\cT_{S,i}^{S',j}$ of $\cR_{S}^{S',j}$, for $i$ large enough. Also, $\cR_S^{S'}$ is of ind-finite codimension in $\cT_S^{S'}$, i.e. it is an ind-f.p. closed sub-ind-scheme. On the level of $R$-points, we have
$$
\cR_S^{S'}(R)=\left\{(x,\cE,f,{v})  \left|~\begin{matrix}   x\in X^S(R)\hfill\\
\cE\text{ a principal }G\text{-bundle over }\Delta_S(x)\hfill\\
 f\text{ a trivialization of }\cE\text{ over }\Delta_S^{S'}(x)\hfill\\
{v}\text{ an }\bbN\text{-section of }\cE\text{ such that }f(v)\text{ extends to }\Delta_S(x)\hfill\end{matrix}\right.\right\}/\sim.
$$
Here $f(v)$ is a section of the trivial $\bbN$-bundle on $\Delta_S^{S'}(x)$, and we require that it extends to a section of the trivial $\bbN$-bundle over $\Delta_S(x)$. Such an extension is unique if it exists. We denote the preimage of $\cR_S^{S'}$ in $\widetilde{\cT}_S^{S'}$ as $\widetilde{\cR}_S^{S'}$. It is an ind-f.p. closed sub-ind-scheme of $\widetilde{\cT}_S^{S'}$, an ind-pro-smooth cover of $\cR_S^{S'}$, its fpqc quotient (locally on $X^S$) by $G_S$, and on $R$-points we identify:
$$
\widetilde{\cR}_S^{S'}(R)=\left\{(x,\cE,f,g,{v})  \left|~\begin{matrix}   x\in X^S(R)\hfill\\
\cE\text{ a principal }G\text{-bundle over }\Delta_S(x)\hfill\\
 f\text{ a trivialization of }\cE\text{ over }\Delta_S^{S'}(x)\hfill\\
g\text{ a trivialization of }\cE\text{ over }\Delta_S(x)\hfill\\
{v}\text{ a section of the trivial }\bbN\text{-bundle on }\Delta_S(x)\\
\text{ such that }fg^{-1}(v)\text{ extends to }\Delta_S(x)\hfill\end{matrix}\right.\right\}/\sim.
$$\
\item The product (over $B$) of co-placid maps is co-placid.
\end{enumerate}\end{example}

\begin{remark}\begin{enumerate}\item Suppose $X=\bG_a$ with parameter $t$. Then $I(x_s)$ is trivialized by $t - t_{x_s}$. There is an isomorphism from $\cT_S^{S'}$ to the kernel of the covering $\cT_S^{S'}\to \cT_{S,i}^{S'}$, given on $R$-points by
$$
(x,f,\cE,\widetilde{v})\mapsto (x,f,\cE,\prod_{s\in S}(t - t_{x_s})\widetilde{v}).
$$
We call this isomorphism the \emph{fiberwise shift map} of $\cT$.\
\item The placid ind-scheme $T=G_S^{S'}/G_S,\cT_S^{S'},\cR_S^{S'}$ is \emph{special} in that one may take the smooth affine covering maps $T^j_i\to T^j_{i'}$ to be vector bundles. But placidity seems to be the more flexible definition: for instance I do not know if point (3) of Remark \ref{flexy} holds if we replace `placid' by `special'.\end{enumerate}\end{remark}

\subsection{Equivariance} \begin{enumerate}\item Note that $G_S^{S'}/G_S,\cT_S^{S'},\cR_S^{S'}$ are all acted on by $G_S$, and the various maps between them are $G_S$-equivariant. In fact, each `approximation' $G_S^{S',j}/G_S$, $\cT^j_i$, $\cR^j_i$ is acted on by some quotient $G_{S,i'}$ of $G_S$, and the transition morphisms are all $G_S$-equivariant.\
\item Suppose that $X=\bG_a$. Then we have the action of $\bG_m$ on $X$ by multiplication. It also acts diagonally on $X^S$. Therefore we may consider $\bC^*\times X^S$ as a smooth groupoid over $X^S$. The group $G_S^{S'}$ over $X^S$ is $\bC^*\times X^S$-equivariant, so we may form the semidirect product
$$
G_S^{S'}\rtimes\bC^*,
$$
a placid affine groupoid ind-scheme over $X^S$. The special case, $G_S\rtimes \bC^*$, is a prosaic affine groupoid scheme over $X^S$. Then, the $G_S$-equivariant structures of $G_S^{S'}/G_S,\cT_S^{S'},\cR_S^{S'}$ and their above approximations upgrade to $G_S\rtimes\bC^*$-equivariant structures (over $X^S$). Again all morphisms, transition or otherwise, of the previous Subsection are $G_S\rtimes\bC^*$-equivariant.\end{enumerate}

\begin{remark}\begin{enumerate}\item Let $T$ be a placid ind-scheme over some base $B$ and $\cG$ be a groupoid scheme over $B$ which acts on $T$. Then we can always choose a $\cG$-equivariant placid presentation of $T$, simply by `smoothing out' any placid presentation by the action of $\cG$. The category $\cP_{\cG}(T)$ of $\cG$-equivariant placid presentations of $T$ is filtered.\
\item Now suppose that $T$ is special. I do not know whether we can choose the special presentation of $T$ to be $\cG$-equivariant. However, if $T$ is special by virtue of being a placid vector space over some intermediate $\cG$-equivariant ind-scheme $U$ of ind-finite type, $T\to U\to B$, and $\cG$ acts linearly, then we can do it. This is what happens for $G_S^{S'},\cT_S^{S'},\cR_S^{S'}$. In the latter two cases, we can take $U=G_S^{S'}/G_S$. In the former case, we can take $U$ to be the quotient of $G_S^{S'}$ by the kernel of the surjection $G_S\to \Map(\Delta_{S,1},G)$. \end{enumerate}\end{remark}

\subsection{Dimension theories}

The following definitions can be found in \cite{Rask}, and essentially in the earlier work \cite{D}, and probably in many other texts.

\begin{definition}\begin{enumerate}\item Let $T$ be a placid scheme, with a placid presentation $T=\nlim_{i\in (\cI)^{op}}(T_i)$. A \emph{dimension theory} on $T_{\cI}$ is a function $d:\cI\to\bZ$, whose value on $i\in\cI$ will be written $d(T_i)$, satisfying the condition 
$$
d(T_{i'})-d(T_{i})=\dim(T_{i'})-\dim(T_{i})
$$
whenever $i\to i'$ in $\cI$. \
\item Take a placid presentation $T=\nlim_{i\in (\cI)^{op}}(T_i)$ of the scheme $T$ and a finer placid presentation $T=\nlim_{i_1\in (\cI_1)^{op}}(T_{i_1})$, $\cI\subset\cI_1$. We may extend a dimension theory $d$ on $T_{\cI}$ to a unique dimension theory, denoted $d$, on $T_{\cI_1}$ by setting 
$$
d(T_{i_1}):=d(T_{i'})-\dim(T_{i'})+\dim(T_{i})
$$
for any $i'\in\cI$ such that $i_1\to i'$ in $\cI_1$. We have thus constructed a filtered system% of $\bZ^{\pi_0(T)}$-torsors
$$
(\{\text{dimension theories on }T_{\cI}\})_{T_{\cI}\in\cP}
$$
indexed by the filtered category $\cP$ of placid presentations of $T$.\
\item A \emph{dimension theory} on $T$ is an element of the colimit of the above filtered system.\
\end{enumerate}\end{definition}

Now let $f:U\to T$ be an f.p. map of placid schemes and choose Cartesian placid presentations indexed by $\cI$ as in Remark \ref{flexy}. In this presentation, a dimension theory $d$ on $T_{\cI}$ defines one on $U_{\cI}$, denoted\footnote{And often abusively denoted $d$.} $f^*d$ and given by the formula
$$
f^*d(U_i):=d(T_i).
$$
That is, we have a map $\{\text{dimension theories on }T_{\cI}\}\to \{\text{dimension theories on }U_{\cI}\}$. The composition of this with the map $\{\text{dimension theories on }U_{\cI}\}\to \{\text{dimension theories on }U\}$ factors through the map $\{\text{dimension theories on }T_{\cI}\}\to \{\text{dimension theories on }T\}$, yielding a map
$$
f^*:\{\text{dimension theories on }T\}\to \{\text{dimension theories on }U\}
$$
independent of any choices of presentation.\

Similarly, suppose $f:U\to T$ is a co-placid map of placid schemes and fix a compatible presentation $f=\nlim_{i\in\cI^{op}}(U_i\to T_i)$. The dimension theory $d$ on $T_{\cI}$ defines one on $U_{\cI}$, denoted\footnote{And certainly not denoted as $d$!} $f^!d$ and given by the formula
$$
f^!(d)(U_i):=d(T_i)+\dim(U_i)-\dim(T_i).
$$
This procedure again determines a map
$$
f^!:\{\text{dimension theories on }T\}\to \{\text{dimension theories on }U\}
$$
independent of any choices of presentation.

\begin{definition}\begin{enumerate}\item Let $T$ be a placid ind-scheme. Fix a reasonable presentation $T=\colim_{j\in\cJ}T^j$. We write this as $T^{\cJ}$. A \emph{dimension theory} on $T$ is an element of the limit of the cofiltered system% of $\bZ$-torsors
$$
(\{\text{dimension theories on }T^j\})_{j\in\cJ^{op}}
$$
with transition morphisms given by the $*$-pullback. For different reasonable presentations these limits are canonically isomorphic.\
\item Given an ind-f.p. map $f:U\to T$ of placid ind-schemes, we get a map
$$
f^*:\{\text{dimension theories on }T\}\to\{\text{dimension theories on }U\}
$$
determined by the condition that the square
$$
\begin{matrix}
\{\text{dimension theories on }T\}&\xrightarrow{f^*}&\{\text{dimension theories on }U\}\\
\downarrow&&\downarrow\\
\{\text{dimension theories on }T_j\}&\xrightarrow{(f^j)^*}&\{\text{dimension theories on }U\times_TT_j\}\end{matrix}
$$
commutes for every reasonable closed subscheme $T_j$ of $T$.
\
\item Given a co-placid map $f:U\to T$ of placid ind-schemes, we get a map
$$
f^!:\{\text{dimension theories on }T\}\to\{\text{dimension theories on }U\}.
$$
determined by the condition that the square
$$
\begin{matrix}
\{\text{dimension theories on }T\}&\xrightarrow{f^!}&\{\text{dimension theories on }U\}\\
\downarrow&&\downarrow\\
\{\text{dimension theories on }T_j\}&\xrightarrow{(f^j)^!}&\{\text{dimension theories on }U\times_TT_j\}\end{matrix}
$$
commutes for every reasonable closed subscheme $T_j$ of $T$.
\
\item Given placid ind-schemes $U$, $T$ over $B$ we get a map
$$
(-)+_B(-): \{\text{dimension theories on }U\}\times \{\text{dimension theories on }T\}\to\{\text{dimension theories on }U\times_BT\}.
$$
Indeed if $U=\colim_{j\in\cJ_U}\nlim_{i\in(\cI_U^j)^{op}}U_i^j$, $T=\colim_{j\in\cJ_T}\nlim_{i\in(\cI_T^j)^{op}}T_i^j$ are placid presentations over $B$ and $d_U$, $d_T$ are dimension theories on $U$, $T$ then $U\times_BT=\colim_{(j_U,j_T)\in\cJ_U\times\cJ_T}\nlim_{(i_U,i_T)\in(\cI_U^{j_U})^{op}\times (\cI_T^{j_T})^{op}}U_{i_U}^{j_U}\times_BT_{i_T}^{j_T}$ is a placid presentation and we define
$$
(d_U+d_T)(U_{i_U}^{j_U}\times_BT_{i_T}^{j_T}):= d_U(U_{i_U}^{j_U})+d_T(T_{i_T}^{j_T}).
$$
If $f:U\times_BT\to U\times B$ is the ind-f.p. closed embedding, then we have $(-)+_B(-)=f^*((-)+_{\spec{C}}(-))$. We will usually write $(-)+_B(-)$ simply as $(-)+(-)$.\end{enumerate}\end{definition}

\begin{remark}\begin{enumerate}\item The set of dimension theories on a connected placid ind-scheme $T$ is a $\bZ$-torsor. Thus the set of dimension theories on a general placid ind-scheme $T$ is a $\bZ^{\pi_0(T)}$-torsor. For an ind-f.p., (resp. co-placid) map $f:U\to T$ of placid ind-schemes, $f^*$ (resp. $f^!$) is a map of $\bZ^{\pi_0(T)}$-sets.\
\item In fact, there is a sheaf (in an appropriate sense) of $\bZ$-torsors on any placid ind-scheme $T$ whose set of global sections equals the set of dimension theories on $T$. We do not need to consider it since in every example of this paper, this sheaf is trivial. \end{enumerate}\end{remark}

\begin{example}\begin{enumerate}\item Recall that $\cT_S^{S'}$ is an infinite-dimensional vector bundle over $G_S^{S'}/G_S$, with `approximations' $\cT_{S,i}^{S',j}$ for $j\in \bZ_{\geq0}$ and $i\in\bZ_{\geq ?}$ for some positive integer $?$ depending on $j$ (see Example \ref{RT}). The approximation $\cT_{S,i}^{S',j}$ is a vector bundle over $G_S^{S',j}/G_S$ of rank $di|S|$. Thus $f:\cT_S^{S'}\to G_S^{S'}/G_S$ is co-placid, and since $G_S^{S'}/G_S$ is ind-finite type it has a dimension theory $d_0$ with constant value $0$. We will denote the dimension theory $f^!d_0$ on $\cT_S^{S'}$ by $\rank(\cT_S^{S'})$. We have
$$
\rank(\cT_S^{S'})(\cT_{S,i}^{S',j}):=di|S|.
$$
It would perhaps be safer to call this $\rank_{G_S^{S'}/G_S}(\cT_S^{S'})$, but the notation becomes too unwieldy. The reader should bear this in mind.\
\item Let $f: \cR_S^{S'}\to \cT_S^{S'}$ be the defining ind-f.p. closed embedding. Then we have the dimension theory $f^*\rank(\cT_S^{S'})$ on $ \cR_S^{S'}$. We will call this simply $\rank(\cT_S^{S'})$.\
\item We also have the dimension theory on $\widetilde{\cT}_S^{S'}$ obtained as the $!$-pullback of $\rank(\cT_S^{S'})$ (or of $d_0$ directly). Its $*$-pullback to $\widetilde{R}_S^{S'}$ coincides with the $!$-pullback of the dimension theory $\rank(\cT_S^{S'})$ on $\cR_S^{S'}$. These dimension theories on $\widetilde{\cT}_S^{S'}$, $\widetilde{\cR}_S^{S'}$ will both be denoted $\rank(\widetilde{\cT}_S^{S'})$.\
\item We will denote the $!$-pullback to $G_S^{S'}$ of the constantly $0$ dimension theory on $G_S^{S'}/G_S$ by $\rank(G_S^{S'})$. It satisfies 
$$
\rank(G_S^{S'})(G_{S,i}^{S',j}):= \dim_{X^S}(G_{S,i})=\dim(G_{S,i})-|S|.
$$
%If $G_{S,i}$ were a vector bundle over $X^S$, then $G_{S,i}^{S',j}$ would be a vector bundle over $G_{S}^{S',j}/G_S$, and this would be literally its rank. That it not the case, but we still call it `rank' to remind the reader that it is the fiber dimension relative to some base. The reader must remember this base!\
%\item Combining (1) and (3), 
\end{enumerate}\end{example}

\subsection{Notational remark}\label{notrem}

We will use the same notational simplification for $G_S^{S'}$, $\cT_S^{S'}$, $\cR_S^{S'}$, $\widetilde{\cT}_S^{S'}$, $\widetilde{\cR}_S^{S'}$ as for formal neighborhoods: for instance
$$
\cR_{\{1,2\}}^{\{1\}}
$$
may be written as
$$
\cR_{12}^1.
$$
Fix a $\bC$-point $x\in X$ and a local parameter $t$ at $x$. This determines isomorphisms $\Delta_1(x)=\spec(\cO)$, $\Delta_1^1(x)=\spec(\cK)$ where $\cO=\bC[[t]]$, $\cK=\bC((t))$. The groups of $\bC$-points of $(G_1)_x$, $(G_1^1)_x$ are put in isomorphism with $G(\cO)$, $G(\cK)$. Note that since $G_1$ is pro-smooth over $X$, we have
$$
(G_1^1/G_1)_x=(G_1^1)_x/(G_1)_x.
$$
We will write informally
$$
(G_1)_x=G(\cO),
$$
$$
(G_1^1)_x=G(\cK),
$$
$$
(G_1^1/G_1)_x = Gr_G.
$$
We will write $(\cT_1^1)_x$, $(\cR_1^1)_x$ as $\cT,\cR$. Since these are ind-f.p. sub-ind-schemes of $\cT_1^1$, they have dimension theories given by pulling back $\rank(\cT_1^1)$. We will write the resulting dimension theories both as $\rank(\cT)$. These dimension theories correspond to the $\dim\bbN(\cO)$ of \cite{BFN} (although dimension theories are not explicitly used in \emph{loc. cit.}).

\subsection{Borel-Moore homology}\label{BM}

Rather than recall the general formalism of equivariant constructible derived categories on placid ind-schemes (see \cite{Rask}), we content ourselves with the following definition.

\begin{definition}\begin{enumerate}\item Let $R$ be a commutative ring\footnote{There is a clash of notation: here $R$ denotes a ring of homological coefficients (not necessarily over $\bC$), rather than a geometric test ring over $\bC$. I sincerely hope this is not a source of confusion.}. Let $T$ be a scheme of finite type over the base $B$ of finite type over $\bC$ with an action of the affine algebraic groupoid $\cG$ over $B$. Then $T^{an}$ has a $\cG^{an}$-equivariant constant sheaf $R$ and dualizing complex $\omega$ with coefficients in $R$, and we will write
$$
H^{n}_{\cG}(T,R):= H^n_{\cG^{an}}(T^{an},R) = \Hom_{D^b_{\cG^{an}}(T^{an})}(R,\Sigma^nR).
$$
$$
H_{n}^{BM,\cG}(T,R):= H^n_{\cG^{an}}(T^{an},\omega) = \Hom_{D^b_{\cG^{an}}(T^{an})}(R,\Sigma^n\omega).
$$\
\item Now suppose that $\cG$ is an affine groupoid scheme over $B$. We may write $\cG$ as the limit of its fpqc quotient affine algebraic groupoids $(\cG_i)_{i\in\cI^{op}}$. We may assume that action of $\cG$ on $T$ factors through each $\cG_i$. We set
$$
H^*_{\cG}(T,R):=\colim_{i\in\cI}H^*_{\cG_i}(T,R)
$$
and
$$
H_*^{BM,\cG}(T,R):=\colim_{i\in\cI}H_*^{BM,\cG_i}(T,R).
$$
Here we have used the fact that for any $i\to i'$ in $\cI$, the $\cG_{i'}$-equivariant complexes obtained from the $\cG_i$-equivariant constant sheaf, respectively dualizing complex, are canonically isomorphic to their $\cG_{i'}$-equivariant counterparts. Thus these restriction functors determine the maps of equivariant cohomology, respectively Borel-Moore homology, which we take colimits over.\
\item Let $T$ be a placid scheme over some base $B$ of finite type over $\bC$ and let $\cG$ be an affine algebraic groupoid over $B$ which acts on $T$. Let $d$ be a dimension theory on $T$. Then the $2d$\emph{-shifted} $\cG$\emph{-equivariant Borel-Moore homology of} $T$, $H_{*-2d}^{BM,\cG}(T,R)$ is defined as follows. Let $T = \nlim_{i\in (\cI)^{op}}(T_i)$ be a $\cG$-equivariant placid presentation of $T$. Observe that pullback defines a graded map of $R$-modules:
$$
H_{*-2d(T_i)}^{BM,\cG}(T_i,R)\to H_{*-2d(T_{i'})}^{BM,\cG}(T_{i'},R)
$$
whenever $i\to i'$ in $\cI$, since $T_{i'}\to T_i$ is a $d(T_{i'})-d(T_i)$-dimensional smooth covering. We then set
$$
H_{*-2d}^{BM,\cG}(T,R):= \colim_{i\in\cI}H_{*-2d(T_i)}^{BM,\cG}(T_i,R).
$$
For different choices of placid presentation, we get canonically isomorphic answers, which justifies the definition.\
\item Let $f:T^0\to T^1$ be a $\cG$-equivariant ind-f.p. embedding of placid ind-schemes over $B$. Let $d$ be a dimension theory on $T^1$. Then there is a pushforward map:
$$
f_*: H_{*-2f^*d}^{BM,\cG}(T^0,R)\to H_{*-2d}^{BM,\cG}(T^1,R)
$$
defined by choosing Cartesian placid presentations of $T^0,T^1$ indexed by $\cI$ and observing that for each $i\in\cI$ the diagram
$$
\begin{matrix}
H_{*-2d(T_{i'}^0)}^{BM,\cG}(T_{i'}^0,R) & \xrightarrow{f_*} & H_{*-2d(T_{i'}^1)}^{BM,\cG}(T_{i'}^1,R) &  \\ 
\uparrow &  & \uparrow \\ 
H_{*-2d(T_{i}^0)} ^{BM,\cG}(T_{i}^0,R) & \xrightarrow{f_*} & H_{*-2d(T_{i}^0)}^{BM,\cG}(T_{i}^1,R) & 
\end{matrix}
$$
commutes. Here the horizontal maps are pushforwards maps along the closed embeddings $T_i^0\to T_i^1$, while the vertical maps are the pullback maps of point (1). The resulting map is independent of any choices we have made.\
\item Now let $T$ be a placid ind-scheme over some base $B$ of finite type over $\bC$ and let $\cG$ be an affine algebraic groupoid over $B$ which acts on $T$. Let $d$ be a dimension theory on $T$. Then we set 
$$
H_{*-2d}^{BM,\cG}(T,R):=\colim_{j\in\cJ}H_{*-2f^*d}^{BM,\cG}(T^j,R)
$$
using the pushforward maps of point (2) of this definition, for any choice $T = \colim_{j\in\cJ}(T^j)$ of $\cG$-equivariant reasonable presentation of $T$. For different presentations we get canonically isomorphic colimits. Here we have written $d$ for the unique dimension theory on $T^j$ compatible with the dimension $d$ on $T$.\end{enumerate}\end{definition}

\begin{remark}\begin{enumerate}
\item Since $\cG$ is a pro-smooth covering groupoid, its action on any finite type approximation $T_i^j$ to the placid ind-scheme factors through the quotient $\cH$ by some (pro-smooth covering) subgroup. We then have
$$
H_{*-2d(T_i^j)}^{BM,\cG}(T_i^j,R)=H^*_{\cG}(B,R)\otimes_{H^*_{\cH}(B,R)}H_{*-2d(T_i^j)}^{BM,\cH}(T_i^j,R).
$$\
\item If moreover $\cG$ is prosaic, then we can choose the sub-group in question to be also pro-unipotent, in which case we have $H^*_{\cG}(B,R)\cong {H^*_{\cH}(B,R)}$ so that
$$
H_{*-2d(T_i^j)}^{BM,\cG}(T_i^j,R)=H_{*-2d(T_i^j)}^{BM,\cH}(T_i^j,R).
$$\
\item It may even happen that we can choose a section $\cH\to\cG$. Take for example $X=\bG_a$, $\cG=G_{1}\rtimes\bC^*$, $\cH=G\times\bC^*\times X$ where $G$ embeds in $G_{1}$ as the subgroup of constant functions. In this case, $\cH$ acts on all of $T$, and we have
$$
H_{*-2d}^{BM,\cG}(T,R)=H_{*-2d}^{BM,\cH}(T,R).
$$
Though it may give a psychological advantage since $\cH$ is an actual smooth algebraic groupoid, this reduction is usually technically unhelpful.
\end{enumerate}
\end{remark}

The following procedures in ordinary equivariant Borel-Moore homology are also defined in the world of placid ind-schemes. Fix a $\cG$-equivariant placid ind-scheme $T$ over $B$ (of finite type over $\bC$) and a dimension theory $d$ on $T$.

\begin{enumerate}\item\textbf{Change of groupoid base.} Suppose that we have some finite type $\cG$-space $A$ over $B$. Then there exists a semi-direct groupoid scheme $\cG\ltimes_B A$, affine pro-smooth over $A$. Suppose that $T\to B$ factors through $A$. Then $T$ is also $\cG\ltimes_BA$-equivariant (over $A$) and we have a canonical isomorphism
$$
H_{*-2d}^{BM,\cG}(T,R)\xrightarrow{\sim} H_{*-2d}^{BM,\cG\ltimes_BA}(T,R).
$$
\item \textbf{Open restriction.} Let $A\subset B$ be a $\cG$-equivariant open subscheme and let $j:T|_A\to T$ be the $\cG$-equivariant ind-f.p. ind-open embedding\footnote{We can do something along these lines for more general ind-smooth maps.} of placid ind-schemes over $A$. The restriction to $A$ of a $\cG$-equivariant placid presentation of $T$ is a $\cG|_A$-equivariant placid presentation of $T|_A$, and applying the ordinary open restriction in Borel-Moore homology one obtains a map
$$
j^!: H_{*-2d}^{BM,\cG}(T,R)\to H_{*-2j^*d}^{BM,\cG}(T|_A,R)   %= H_{*-2j^*d}^{BM,\cG|_A}(T|_A,R).
$$
One usually goes on to compose this map with the isomorphism $H_{*-2j^*d}^{BM,\cG}(T|_A,R)\cong H_{*-2j^*d}^{BM,\cG\ltimes_BA}(T|_A,R)$. \
\item \textbf{Proper pushforward.} Let $f:U\to T$ be a $\cG$-equivariant ind-proper (in particular ind-f.p.) map of placid ind-schemes over $B$. By choosing a $\cG$-equivariant Cartesian placid presentation and applying the ordinary proper pushforward in Borel-Moore homology, one obtains a map
$$
f_*: H_{*-2f^*d}^{BM,\cG}(U,R)\to H_{*-2d}^{BM,\cG}(T,R).
$$ \
\item \textbf{Restriction of equivariance.} Let $\cH\to \cG$ be a morphism of groupoid schemes over $B$. Then we have `restriction of equivariance' maps
$$
H_{*-2d}^{BM,\cG}(T,R)\to H_{*-2d}^{BM,\cH}(T,R).
$$\
\item \textbf{Co-placid restriction.} Let $f:U\to T$ be a $\cG$-equivariant co-placid map of placid ind-schemes over $B$. By choosing a $\cG$-equivariant compatible presentation of $f$ and applying the ordinary smooth pullback in Borel-Moore homology, one obtains a map
$$
f^!: H_{*-2d}^{BM,\cG}(T,R)\to H_{*-2f^!d}^{BM,\cG}(U,R).
$$
\
\item \textbf{Restriction with supports.} Let $p:T'\to U'$ be a $\cG$-equivariant co-placid map of placid ind-schemes over $B$, and let $f':U'\to T'$ be a $\cG$-equivariant section of $p$. Let $g:T\to T'$ be ind-f.p. and let $U=T\times_{T'}U'$, so that we have a Cartesian square:
$$
\begin{matrix}U&\xrightarrow{f}& T\\
\downarrow{g'}&&\downarrow{g}\\
U'&\xrightarrow{f'}&T'.\end{matrix}
$$
Suppose we have a dimension theory $d'$ on $U'$ such that $g^*p^!d'=d$, and set $d^g:=(g')^*d'$. We can choose a $\cG$-equivariant compatible presentation of this Cartesian square, i.e. write it as
$$
\colim_{j\in\cJ}\nlim_{i\in\cI^j}\left(\begin{matrix}U_i^j&\xrightarrow{f_i^j}& T_i^j\\
\downarrow{}&&\downarrow{}\\
(U')_i^j&\xrightarrow{(f')_i^j}&(T')_i^j.\end{matrix}\right)
$$
such that the induced presentations of the vertical morphisms $U\to U'$, $T\to T'$ are Cartesian, and such that there exists a $\cG$-equivariant presentation $\colim_{j\in\cJ}\nlim_{i\in\cI^j}((T')_i^j\xrightarrow{p_i^j}(U')_i^j)$ of $p$. Then $p_i^j$ is a smooth covering and $(f')_i^j$ is its section, and we have a `restriction with supports' morphism $(f_i^j)^!:H_*^{BM,\cG}(T_i^j,R)\to H_{*-2\dim(p_i^j)}^{BM,\cG}(U_i^j,R)$, which assemble to give a morphism
$$
f^!:H_{*-2d}^{BM,\cG}(T,R)\to H_{*-2d^g}^{BM,\cG}(U,R).
$$\
\item \textbf{Averaging.} Suppose that $\cH\subset \cG$ is a normal subgroup over $B$ of finite index. Then we have the averaging maps
$$
H_{*-2d}^{BM,\cH}(T,R)\to H_{*-2d}^{BM,\cG}(T,R).
$$\
\item \textbf{Specialization.} Suppose now that $B=\bG_a$ and $\cG$ is a group scheme over $B=\bG_a$. We have the closed subscheme $i:\{0\}\to\bG_a$, and the complementary open subscheme $j:\bG_m\to\bG_a$. Let us write $T^*,\cG^*$ for the restrictions to $\bG_m$ and $T|_0,\cG|_0$ for the restrictions to $\{0\}$. Choose a $\cG$-equivariant placid presentation
$$
T=\colim_{j\in\cJ}\nlim_{i\in(\cI^j)^{op}}T^j_i
$$
of $T$ over $\bG_a$. Then by restriction we obtain a $\cG^*$-equivariant placid presentation
$$
T^*=\colim_{j\in\cJ}\nlim_{i\in(\cI^j)^{op}}(T^j_i)^*
$$
of $T^*$ over $\bG_a$ and a $\cG|_0$-equivariant placid presentation
$$
T|_0=\colim_{j\in\cJ}\nlim_{i\in(\cI^j)^{op}}(T^j_i)|_0
$$
of $T|_0$ over $\{0\}$, both of which are Cartesian with the original placid presentation. Since $\cG$ is pro-smooth covering, we have specialization maps
$$
s^j_i:H_{*}^{BM,\cG^*}((T^j_i)^*,R)\to H_{*+2}^{BM,\cG|_0}((T^j_i)|_0,R)
$$
which are compatible, so yield
$$
s:H_{*-2j^*d}^{BM,\cG^*}(T^*,R)\to H_{*+2-2i^*d}^{BM,\cG|_0}(T|_0,R).
$$
We will write $d^*:=j^*d$, $d|_0:=i^*d$.\
\item \textbf{Steenrod's construction.} Let $\cG$ be an affine algebraic group and $B=*$\footnote{We will not need the relative situation.}. Write $pd$ for the dimension theory $\underbrace{d+\ldots+d}_{\text{$p$ times}}$ on the $\cG^{\mup}\rtimes\mup$-equivariant placid ind-scheme $T^{\mup}$. We have non-linear maps
$$
St^{BM}:H_{n-2d}^{BM,\cG}(T,R)\to H_{pn-2pd}^{BM,\cG^{\mup}\rtimes\mup}(T^{\mup},R)
$$
which are monoidal with respect to the map $St^H:H^n_{\cG}(*,R)\to H^{pn}_{\cG^{\mup}\rtimes\mup}(*,R)$ and whose discrepancy from additivity is averaged from $H_{pn-2pd}^{BM,\cG^{\mup}}(T^{\mup},R)$. If $R$ is a perfect field $k$ of characteristic $p$, we have the non-linear graded $St_{ex}$-monoidal maps
$$
St^{BM}_{ex}:  H_{*}^{BM,\cG}(T,k)^{(1)}\to H_{*}^{BM,\cG^{\mup}\rtimes\mup}(T^{\mup},k).
$$
%Suppose $\cG$ is a pro-smooth covering groupoid over $B$. Suppose that $\cH$ is a normal pro-smooth covering groupoid of $\cG$ and that the quotient groupoid $\cG/\cH$ exists as a pro-smooth covering groupoid over the appropriate base\footnote{If the reader is uncomfortable with this, they may assume that either (i) $\cG=\cH\times_B\cL$ for pro-smooth covering groups $\cH$, $\cL$ over $B$, such that the fpqc quotient (relative to $B$) $T/\cH$ exists; or (ii) that $\cG=Q\times\cL$, $\cH=Q\times B$ where $Q$ is an algebraic group acting freely on $B$ and $\cL$ is a $Q$-equivariant pro-smooth covering group over $B$, in which case the `appropriate base' is $B/Q$.} $A$. Suppose that the fpqc quotient $f:T\to T/\cH$ exists as a placid $\cG/\cH$-equivariant ind-scheme over $A$. Then $f$ is co-placid, and  Suppose that $d=f^!d'$ for some dimension theory $d'$ on $T/\cG$. Then the co-placid restriction
%$$
%f^*: H_{*-2d'}^{BM,\cG}(T/\cG,R)\to H_{*-2fd}^{BM,\cG}(U,R)
%is an isomorphism
%$$ \
\end{enumerate}

The following facts carry over from the ordinary case.

\begin{enumerate}
\item \textbf{Descent.}\begin{enumerate}\item Suppose that $\cH$ is a normal (pro-smooth covering) subgroup of $\cG$ over $B$, and that the fpqc quotient group $\cG/\cH$ exists as a pro-smooth covering group over $B$. The main example here is $\cG=\cH\times_B\cL$ for pro-smooth covering groups $\cH$, $\cL$ over $B$. Suppose that $\cH$ acts freely on $T$, so that in particular the fpqc quotient (relative to $B$) $T/\cH$ exists as a $\cG/\cH$-equivariant placid ind-scheme over $B$. So the quotient map $f:T\to T/\cH$ is $\cG$-equivariant and co-placid. Suppose that there exists a dimension theory $d'$ on $T/\cH$ satisfying $f^!d'=d$. Then the composition
$$
H_{*-2d'}^{BM,\cG/\cH}(T/\cH,R)\xrightarrow{\text{}}H_{*-2d'}^{BM,\cG}(T/\cH,R)\xrightarrow{f^!}H_{*-2d}^{BM,\cG}(T,R)
$$
is an isomorphism.\
\item Suppose that $\cG=\cL\rtimes Q$ where $Q$ is an algebraic group acting regularly\footnote{i.e. the stabilizer groupoid $Q_B:=B\times_B(Q\ltimes B)$ is smooth over $B$.} on $B$, and $\cL$ is a $Q$-equivariant pro-smooth covering group over $B$. Thus the maximal subgroup $\cH=\cL\rtimes_BQ_B$ is a pro-smooth covering group over $B$, and the quotient groupoid $P:=\cG/\cH=(Q\ltimes B)/Q_B$ is smooth over $B$. Let $A=B/P$, $\pi:B\to A$. Suppose that we are given a slice $i:A\subset B$. Consider the placid $\cG\times_B\pi^*(\cH|_A)$-equivariant ind-scheme 
$$
\cG|_A\times_AT|_A.
$$
On the one hand, the normal subgroup $\pi^*(\cH|_A)$ acts freely and the quotient is $T$, giving
$$
H_{*-2d}^{BM,\cG}(T,R)\xrightarrow{\sim} H_{*-2d'}^{BM,\cG\times_A\cH|_A}(\cG|_A\times_AT|_A,R)
$$
where $d'$ is the $!$-pullback of $d$. On the other hand, the normal subgroup $\cL$ acts freely, and the quotient is $Q\times T|_A$ with its residual action of $(Q\ltimes B)\times_B\pi^*(\cH|_A) = (Q\times\cH|_A)\ltimes_AB$, where $\cH|_A$ acts trivially on $B$ over $A$. Thus we have the isomorphisms
$$\begin{matrix}
H_{*-2d''}^{BM,\cH|_A}(T|_A,R) & \xrightarrow{\sim} & H_{*-2d''-2\dim Q}^{Q\times \cH|_A}(Q\times T|_A,R)\\
&&\downarrow{\vsim}\\
H_{*-2d'}^{BM,\cG\times_A\cH|_A}(\cG|_A\times_AT|_A,R) & \xleftarrow{\sim} & H_{*-2d''-2\dim Q}^{(Q\times \cH|_A)\ltimes_AB}(Q\times T|_A,R)\\
%&&&&\downarrow{\vsim}\\
%&&&&
 \end{matrix}
$$
if $d''$ is a dimension theory whose $!$-pullback along $\cG|_A\times_AT|_A\to Q\times T|_A\to T|_A$ equals $d'$. Thus we obtain an isomorphism
$$
H_{*-2d''}^{BM,\cH|_A}(T|_A,R)\xrightarrow{\sim} H_{*-2d}^{BM,\cG}(T,R).
$$
\end{enumerate}

\item \textbf{Compatibilities.} These various maps between Borel-Moore homology groups all commute with each other, whenever this makes sense. For instance, we have:
\begin{enumerate}\item If $f:U\to T$, resp. $p: V\to T$, are ind-proper, resp. co-placid, $\cG$-equivariant maps, so that we have a $\cG$-equivariant Cartesian square
$$
\begin{matrix}W&\xrightarrow{f'}&V\\
\downarrow{p'}&&\downarrow{p}\\
U&\xrightarrow{f}&T\end{matrix}
$$
then we have $p^!f_*=f'_*(p')^!$.\

\item Specialization commutes with proper pushforward, restriction of equivariance, co-placid restriction and averaging. To spell this out in the most complicated case: take a `restriction with supports framework' as in (6) above with $B=\bG_a$ and $\cG$ a group. We may restrict all the data over $\{0\}$ or over $\bG_a-\{0\}$, and obtain again `restriction with supports frameworks'. We therefore obtain a square
$$
\begin{matrix}
H_{*-2d^*}^{BM,\cG^*}(T^*,R) & \xrightarrow{s_T} & H_{*-2d|_0}^{BM,\cG|_0}(T|_0,R)\\
\downarrow{^{(f|_{\bG_a-\{0\}})^!}}&& \downarrow{^{(f|_{\{0\}})^!}}\\
H_{*-2(d^g)^*}^{BM,\cG^*}(U^*,R) &\xrightarrow{s_U}& H_{*-2(d^g)|_0}^{BM,\cG|_0}(U|_0,R)
\end{matrix}
$$
which is commutative.\
\item Averaging commutes with change of groupoid base, open restriction, proper pushforward, restriction of equivariance\footnote{That is, if $\cH\subset\cG$ is of finite index over $B$ and $\cG'\to\cG$ is a map, the fiber product $\cH\times_{\cG}\cG'$ is of finite index in $\cG'$, and the two possible maps from the $\cH$-equivariant BM homology to the $\cG'$-equivariant homology coincide. For some reason, we have steadfastly avoided using quotient stacks. If we had used them, this would be an example of proper base change.}, co-placid restriction. %Averaging also commutes with the descent isomorphism (b) in the following sense.
\end{enumerate}
\end{enumerate}

%Return to the situation of (6) `Restriction with supports' above. Let $A\subset B$ be a locally closed union of $\cG$-orbits.  

\subsection{The branch}

We assume from now on that $X=\bG_a$ with global parameter $t$. Thus the $0$-fibers $G(\cO)$ of $G_1$, $\cR$ of $\cR_1^1$, \emph{etc.}, have actions of $\bC^*$. We assume also that $G$ is reductive, so that $G_S^{S'}/G_S$ is ind-projective over $X^S$. We recall the definitions of \cite{BFN} with respect to our notations.

\begin{definition}\begin{enumerate}\item The \emph{Coulomb branch} (over $R$) is the graded $H^*_{G(\cO)}(*,R)$-module
$$
A^*:=H_{*-2\rank(\cT)}^{BM,G(\cO)}(\cR,R).
$$
\
\item The \emph{quantum Coulomb branch} (over $R$) is the graded $H^*_{G(\cO)\rtimes\bC^*}(*,R)=H^*_{G(\cO)}(*,R)[\hbar]$-module
$$
A_\hbar^*:=H_{*-2\rank(\cT)}^{BM,G(\cO)\rtimes\bC^*}(\cR,R).
$$
Here $\hbar$ has degree $2$.\
\item We will often write $A^*,A_\hbar^*$ as simply $A,A_\hbar$.
\end{enumerate}\end{definition}

\begin{lem}$A^*$, $A_\hbar^*$ are evenly graded and free over $H^*_{G(\cO)}(*,R)$, $H^*_{G(\cO)}(*,R)[\hbar]$. We have a canonical isomorphism $A_\hbar^*/\hbar\cong A_\hbar$.\end{lem}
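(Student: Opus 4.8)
The plan is to argue as in \cite{BFN}, reducing the statement to the corresponding facts for closed unions of affine Grassmannian Schubert cells and exploiting that parity vanishing forces the relevant long exact sequences to degenerate. First I would replace the defining reasonable presentation of $\cR$ by the one coming from the projection $\pi\colon\cR\to Gr_G$: for a dominant coweight $\lambda$ let $\overline{Gr}{}^{\lambda}$ be the closure of the $G(\cO)$-orbit $Gr^\lambda$ and set $\cR_\lambda:=\pi^{-1}(\overline{Gr}{}^\lambda)$. These are placid schemes and form a reasonable presentation of $\cR$ (cf.\ \cite{BFN}), so that
$$
A^*=\colim_\lambda H_{*-2\rank(\cT)}^{BM,G(\cO)}(\cR_\lambda,R),\qquad A_\hbar^*=\colim_\lambda H_{*-2\rank(\cT)}^{BM,G(\cO)\rtimes\bC^*}(\cR_\lambda,R),
$$
with transition maps the pushforwards along the closed embeddings $\cR_\mu\hookrightarrow\cR_\lambda$. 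It then suffices to show that, for each $\lambda$, both groups are free and evenly graded over $H^*_{G(\cO)}(*,R)$ resp.\ $H^*_{G(\cO)}(*,R)[\hbar]$, that the transition maps are split injective with free cokernel, and that reduction mod $\hbar$ identifies the second group with the first.

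The key steps will then be the following. (i) Each finite-type approximation $\cR_{\lambda,i}$ from the placid structure of $\cR_\lambda$ is stratified $G(\cO)\rtimes\bC^*$-invariantly by the loci over the orbits $Gr^\mu$ ($\mu\le\lambda$); since all fibre dimensions are constant along a single $G(\cO)$-orbit, $\cR_{\lambda,i}$ restricts over $Gr^\mu$ to a vector bundle, hence to an affine fibration over $Gr^\mu$, and $Gr^\mu$ is itself affinely paved (e.g.\ by Iwahori orbits). Thus $\cR_{\lambda,i}$ admits a $G(\cO)\rtimes\bC^*$-invariant affine paving, whence $H^{BM}_*(\cR_{\lambda,i},R)$ is free and evenly graded over $H^*_{G(\cO)}(*,R)[\hbar]$ (and, forgetting $\bC^*$, over $H^*_{G(\cO)}(*,R)$). (ii) For $i$ large the transition maps $\cR_{\lambda,i'}\to\cR_{\lambda,i}$ are affine-space-bundle pullbacks of the bundle maps $\cT_{i'}\to\cT_i$ (using that $\cR$ contains $\ker(\cT\to\cT_i)$ for $i\gg0$), so the corresponding pullbacks in Borel--Moore homology become isomorphisms after the $\rank(\cT)$-shift; hence the limit over $i$ transports the properties of (i) to $H_{*-2\rank(\cT)}^{BM}(\cR_\lambda,R)$ in both equivariance levels. (iii) Whenever a placid scheme has evenly graded Borel--Moore homology, the Leray spectral sequence of the fibration over $B\bC^*$ computing $H^{BM,G(\cO)\rtimes\bC^*}_*$ from $H^{BM,G(\cO)}_*$ degenerates by parity, so that restriction of equivariance induces an isomorphism $H_{*-2\rank(\cT)}^{BM,G(\cO)\rtimes\bC^*}(-,R)\otimes_{R[\hbar]}R[\hbar]/\hbar\xrightarrow{\ \sim\ }H_{*-2\rank(\cT)}^{BM,G(\cO)}(-,R)$. (iv) To reach $\cR_\lambda$ itself, induct on $\mu\le\lambda$ using the long exact Borel--Moore sequence of the pair $(\pi^{-1}(\overline{Gr}{}^{<\mu}),\pi^{-1}(Gr^\mu))$: by the inductive hypothesis and step (i) both outer terms are evenly graded, the connecting maps vanish, the sequence breaks into short exact sequences of free modules, and these split since the quotients are free; running this $\bC^*$-equivariantly and comparing the two chains of short exact sequences by the five lemma (together with (iii) on the strata) propagates the mod-$\hbar$ isomorphism through the induction. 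Finally a colimit over $\lambda$ of free modules along split injections with free cokernels is again free and evenly graded, which yields the Lemma, with $A_\hbar^*/\hbar\cong A^*$.

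The main obstacle will be organisational rather than conceptual: one has to keep the placid-ind-scheme bookkeeping consistent throughout — the dimension theory $\rank(\cT)$, the doubly indexed system $\cR_\lambda=\nlim_i\cR_{\lambda,i}$ sitting inside $\cR=\colim_\lambda\cR_\lambda$, and the mutual compatibility of the affine paving, the limit over $i$, the pushforwards in the colimit over $\lambda$, and reduction mod $\hbar$ — and in particular to check that the strata $\cR_{\lambda,i}|_{Gr^\mu}$ genuinely are vector bundles, i.e.\ that the successive kernels cut out in the approximation process have constant rank along each orbit. Once the evenness of $H^*_{G(\cO)}(\overline{Gr}{}^\lambda,R)$ and its freeness over $H^*_{G(\cO)}(*,R)$ are granted — classical, via the affine paving of the affine Grassmannian — the remaining homological algebra is forced.
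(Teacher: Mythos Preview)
Your proposal is correct and follows essentially the same approach as the paper: the paper's proof simply cites \cite{BFN} and notes that the argument there (for $R=\bC$) works for any $R$, the essential point being that the equivariant parameters are in even degrees and an equivariant placid presentation can be chosen so that each approximation has a complex cell decomposition. You have spelled out in detail exactly what that BFN argument is --- the Schubert stratification, the affine paving of each piece, parity vanishing forcing degeneration, and the resulting freeness and mod-$\hbar$ comparison --- so your write-up is a faithful elaboration of the proof the paper points to (and you have correctly read the evident typo $A_\hbar^*/\hbar\cong A_\hbar$ as $A_\hbar^*/\hbar\cong A^*$).
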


\begin{proof}The proof in \cite{BFN} in the case $R=\bC$ works for any $R$. The essential point is that the equivariant parameters are in even degrees, and an equivariant placid presentation may be chosen such that each `approximation' has a complex cell decomposition.\end{proof}

We will also consider $\cA^*: = H_{*-2\rank(\cT)}^{BM,G(\cO)\rtimes\mup}(\cR,R)$. The same proof shows that the natural map
$$
H^*_{\mup}(*,R)\otimes_{R[\hbar]}A^*_\hbar\to \cA^*
$$
is an isomorphism. In particular in the case $R=\bfp$ we have $\cA^*=A^*_\hbar[a]$. We have an averaging map $A^*\to\cA^*$, which after identifying $A^*=R\otimes_{R[\hbar]}A^*_\hbar$, $\cA^*=H^*_{\mup}(*,R)\otimes_{R[\hbar]}A^*_\hbar$ is induced by the averaging map of $R[\hbar]$-modules $R\to H^*_{\mu}(*,R)$. This is the map which multiplies by $p$ in degree $0$. Therefore in the case $R=\bfp$, the averaging map equals $0$.

\begin{remark}\begin{enumerate}\item In \cite{BFN} $A^*,A_\hbar^*$ are given ring structures\footnote{$\cA^*$ is also a ring in the same way.} by a form of convolution in Borel-Moore homology. They show\footnote{For $R=\bC$, but it is true for any $R$.} that $A^*$ is commutative and $A_\hbar^*$ is an $\hbar$-quantization of $A^*$. We will recall the construction in the course of the proof of our main theorem. The idea (due originally to Beilinson and Drinfeld \cite{BD}) is to express the multiplication in $A^*$ by a manifestly commutative specialization map.\
\item Recall that $\cT$ is the fpqc quotient $G(\cK)\frac{\times}{^{G(\cO)}} \bbN(\cO)$ of the placid ind-scheme $\widetilde{T}:=G(\cK)\times \bbN(\cO)$. Both are ind-pro-smooth covers\footnote{In fact, ind-pro-smooth ind-fiber bundles.} of $Gr_G$, and have respective dimension theories $\rank(\cT)$, $\rank(\widetilde{\cT})$ given by their ranks over $Gr_G$. Let us denote by $\widetilde{\cR}$ the corresponding $G(\cO)$-bundle over $\cR$; it has a compatible dimension theory $\rank(\widetilde{\cT})$. By descent, we have the isomorphism
$$
A^*_\hbar=H_{*-2\rank(\cT)}^{BM,G(\cO)\rtimes\bC^*}(\cR,R)\xrightarrow{\sim} H_{*-2\rank(\widetilde{\cT})}^{BM,(G(\cO)\times G(\cO))\rtimes\bC^*}(\widetilde{\cR},R),
$$
and similarly for $A^*$. This shows that $A^*,A_\hbar^*$ have two module structures over $H^*_{G(\cO)}(*,R)$. In fact, these module structures coincide with the left- and right- multiplication by a subalgebra $H^*_{G(\cO)}(*,R)\subset A^*,A^*_\hbar$. Since $A^*$ is commutative, these two module structures coincide everywhere. However $H^*_{G(\cO)}(*,R)$ is \emph{not} in the center of $A^*_\hbar$, so these two module structures are different there.\end{enumerate}\end{remark}

\subsection{The map $F_\hbar$}\label{alpha}We will set $R=\bfp$ from now on. The rest of this section is devoted to the proof (and explanation) of the following theorem:

\begin{thm}\label{mainthm} $A_\hbar$ is a Frobenius-constant quantization of $A$.\end{thm}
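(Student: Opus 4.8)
The strategy is to realise $F_\hbar$ as a composite of Steenrod's construction (applied to the Borel--Moore homology defining $A$) with a Beilinson--Drinfeld degeneration and a specialization map; this is the homological shadow of the fact that the convolution product on $A_\hbar$ is controlled by a based-loop-group multiplication. To set up the degeneration, write $S=\mup$ and consider the fusion ind-scheme $\cR_S^S$ over $X^S=\bG_a^S$ of Example \ref{RT}, which is $G_S\rtimes\bC^*$-equivariant and carries a further action of $\mup$ permuting the $S$-slots cyclically. Let
$$
\phi:\bA^1\to X^S,\qquad \hbar\mapsto(\zeta^i\hbar)_{i\in\bZ/p},
$$
with $\zeta$ a fixed primitive $p$-th root of unity; then $\phi$ is equivariant for $\mup$ acting on the source $\bA^1=\spec\bfp[\hbar]$ by scaling and on the target by the cyclic permutation, and also for the diagonal loop rotation $\bC^*$. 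Pulling back gives a placid ind-scheme $\cX:=\phi^*\cR_S^S$ over $\bA^1$ with an action of $(\phi^*G_S)\rtimes\mup$. By the coincidence computation of Subsection \ref{prosaic}, the special fibre $\cX|_0$ is $\cR$ with $(\phi^*G_S)|_0=G(\cO)$ and the residual $\mup$ acting by loop rotation, while the generic fibre $\cX|_{\bG_m}$ is (after the free $\mup$-quotient of $\bG_m$) the twisted product with fibre $\cR^{\mup}$, $G(\cO)^{\mup}$-equivariant, with $\mup$ permuting the factors cyclically. This is precisely the degeneration, in the style of \cite{BD}, of ``$\mup\subset$ loop rotation on $\cR$'' into ``$\mup$ cyclically permuting $\cR^{p}$''.

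Next I would apply the Steenrod's construction procedure of Subsection \ref{BM} to the placid ind-scheme $\cR$ with its $G(\cO)$-action and the dimension theory $\rank(\cT)$ (extended to the prosaic group $G(\cO)$ via its finite-type quotients), getting the $St^H_{ex}$-monoidal maps
$$
St^{BM}_{ex}:A^{(1)}\to H^{BM,\,G(\cO)^{\mup}\rtimes\mup}(\cR^{\mup},\bfp)
$$
(suitably shifted by $\rank(\cT)$), and I would identify the target with the generic-fibre Borel--Moore homology of $\cX$. Then, via the specialization procedure of Subsection \ref{BM} applied to the groupoid $(\phi^*G_S)\rtimes\mup$ over $\bA^1$ — equivalently, after descent along the free $\mup$-action on $\bG_m$ — so that the quantum parameter $\hbar$ emerges in the limit, I obtain
$$
s:H^{BM,\,G(\cO)^{\mup}\rtimes\mup}(\cR^{\mup},\bfp)\longrightarrow H^{BM,\,G(\cO)\rtimes\mup}_{*-2\rank(\cT)}(\cR,\bfp)=\cA^{*},
$$
and for $R=\bfp$ one has $\cA^{*}=A^{*}_\hbar[a]$ with $a$ in odd degree, as recalled above. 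Set $F_\hbar:=s\circ St^{BM}_{ex}$. Since $A^{*}$, hence $A^{(1)}$, is evenly graded (Lemma above) and both $St^{BM}_{ex}$ and $s$ preserve parity, the image lands in the even part $A^{*}_\hbar\subset\cA^{*}$; and since the failure of $St^{BM}_{ex}$ to be additive is averaged from the non-$\mup$-equivariant homology (Propositions \ref{induced}--\ref{Tate}), which after specialization becomes the averaging map $A^{*}\to\cA^{*}$ — zero over $\bfp$, as shown above — the resulting map $F_\hbar:A^{(1)}\to A^{*}_\hbar$ is $\bfp$-linear.

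For the ring structure, multiplicativity of $F_\hbar$ follows from the monoidality of Steenrod's construction (Subsection \ref{six}) together with the fact, due to \cite{BFN}, that the convolution product on $A^*_\hbar$ is itself given by such a specialization map, and with the compatibility of specialization with proper pushforward (Subsection \ref{BM}). That $F_\hbar$ lifts the Frobenius, i.e.\ $\epsilon\circ F_\hbar=F$, should come from the observation that reducing modulo $\hbar$ agrees with forgetting the $\mup$-equivariance (the restriction map $\Res^{\mup}_1$ kills both $a$ and $\hbar$ and commutes with specialization): then $St^{BM}_{ex}$ degenerates to the non-equivariant external $p$-th power, $s$ degenerates to the Beilinson--Drinfeld--BFN formula for the product, and the value on $x^{\boxtimes p}$ is $x^{p}$, which is exactly $F$ on $A^{(1)}$.

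The genuinely hard point is centrality: one must show $F_\hbar(A^{(1)})\subset Z(A_\hbar)$. Geometrically, $F_\hbar(x)$ is carried by the symmetric cluster $\{\zeta^i\hbar\}_i$ of $p$ points, and for $b\in A^{*}_\hbar$ (supported at one further point) the two products $F_\hbar(x)\cdot b$ and $b\cdot F_\hbar(x)$ are obtained by colliding that point with the cluster from either side; their difference is the $\mup$-monodromy defect of $b$ around the cluster. Because $F_\hbar(x)$ is, by construction, invariant under the cyclic symmetry of the cluster — it lies in the image of the Tate (fully $\mup$-symmetrised) part of $A^{\otimes\mup}$ — this defect is an averaged class, in the image of $N=\sum_{g\in\mup}g$, hence $p$-torsion and so zero over $\bfp$. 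This is the homological incarnation of the identity $(\operatorname{ad}\partial)^p=\operatorname{ad}(\partial^{[p]})$ underlying Frobenius-constant quantizations; restricted to a maximal torus it recovers the $\hbar$-Artin--Schreier map of Fact \ref{torusfact}. Making this vanishing precise — running the $(p+1)$-point fusion argument in the placid ind-scheme setting and identifying the correction term with an honest $\mup$-averaging — together with the bookkeeping of the degeneration $\cX$ when $\mup$ moves the base $\bA^1$, is the content of the rest of this section.
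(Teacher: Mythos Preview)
Your overall strategy matches the paper's: construct $F_\hbar$ as Steenrod's map followed by a Beilinson--Drinfeld degeneration and specialization, then verify linearity, multiplicativity, centrality, and the Frobenius-lifting property separately. The construction of $F_\hbar$ you outline is essentially the paper's composition (\ref{fhbar}), modulo one technical point: to apply specialization you need the equivariance groupoid to be a \emph{group} scheme over the base, but your $(\phi^*G_S)\rtimes\mup$ has $\mup$ acting nontrivially on $\bA^1$. The paper resolves this by passing to the quotient $Y=X/\!/\mup$ (parameter $t^p$), defining $\alpha_{(p)}^{(p)}:=(\alpha_{\mup}^{\mup}|_{\text{twisted diagonal}})/\!/_\chi\mup$ as an ind-scheme over $Y$, and inserting an intermediate restriction-of-equivariance step from $G_{(p)}^*\rtimes\bC^*$ to $G_{(p)}^*\rtimes\mup$ before specializing; with that adjustment your construction agrees with the paper's. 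Your linearity argument (the averaging map $A^*\to\cA^*$ vanishes in characteristic $p$) and your multiplicativity and Frobenius-lifting sketches are in agreement with the paper.

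Your centrality heuristic, however, names the wrong mechanism. You attribute the vanishing of the commutator $[F_\hbar(x),b]$ to its being an averaged class, in the image of $N=\sum_{g\in\mup}g$ and hence $p$-torsion --- the same device used for linearity. The paper does \emph{not} argue this way, and centrality here is not a characteristic-$p$ phenomenon at all. Instead, the paper adapts the fusion-commutativity argument of \cite{BFN2}: over $Y$ one forms the space $\cR_{(p)0}^{(p)0}$ (allowing poles at the cluster $\pi^{-1}(y)$ \emph{and} at a further fixed point $0$) and constructs two ``paths'' from $H^{BM}((\cR_{(p)}^{(p)})^*)\otimes A_\hbar^*[a]$ to $H^{BM}(\cR_{(p)0}^{(p)0})$ --- a left one computing $F_\hbar(x)\cdot b$ and a right one computing $b\cdot F_\hbar(x)$ --- each a chain of co-placid pullbacks, descent, restriction-with-supports, and an ind-proper pushforward. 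Over $Y^*$ the cluster and the point $0$ are disjoint, the diagram degenerates (subscripts $(p)$ and $0$ ``split apart''), and \emph{both} paths compute the identity map. Since specialization commutes with every operation in both paths, they specialize to the same map over $\{0\}$, which forces the image of the specialization map to be central. No monodromy defect or Tate averaging enters this step; the $\mup$-equivariance of the Steenrod output is used only to land in $A_\hbar^*$ and to make the linearity argument work, not to kill any commutator.
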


We will construct the requisite map $F_\hbar$ using Steenrod's construction and a specialization map. First we introduce some new notation. We will fix $X=\bG_a$, with parameter $t$. We will use another base curve $Y=\bG_a$ with parameter $t^p$. We have $Y=X//\mup$, where $\mup\subset \bC^*$ acts on $X$ through the character $\chi$ given by restricting the weight one action of $\bC^*$. Let $\pi:X\to Y$ be the quotient map; under the identifications $X=\bG_a=Y$, $\pi$ is the $p^{th}$-power map, and is $\bC^*$-equivariant when $\bC^*$ acts on $Y$ with weight $p$. For $y\in Y^S(R)$, we will use $t^p$ to identify the coordinates $y_s$ with elements of $R$. Then for $y\in Y^S(R)$, we have the affine scheme 
$$
\pi^*\Delta_S^{S'}(y) = \spec(R[t][[\prod_{s\in S}(t^p-{y_s})]][\prod_{s\in S'}(t^p-y_{s'})^{-1}]).
$$
Now $\chi$ determines a `twisted-diagonal' embedding $X\subset X^{\mup}$ as the $\chi$-eigenline for the cyclic action of $\mup$. Let $\alpha$ be one of the symbols $G$, $\bbN$, $\cT$, $\cR$, $\widetilde{\cT}$, $\widetilde{\cR}$. Then we will set
$$
\alpha_{(p)}:=\alpha_{\mup}//_{\chi}\mup
$$
$$
\alpha_{(p)}^{(p)}:=\alpha_{\mup}^{\mup}//_{\chi}\mup.
$$
Here the symbol `$//_{\chi}\mup$' means `restrict along the twisted-diagonal then take categorical quotient by $\mup$'. The action of $\mup$ in question is the one that does not involve loop-rotation, i.e. that which scales $x\in X(R)$ but does not change any of the data $\cE,f,v$ etc. These are all placid ind-schemes over $Y$, and  behave in essentially the same way as their earlier counterparts: $G_{(p)}$ is an affine pro-smooth covering group scheme over $Y$, $Gr_{(p)}:=G_{(p)}^{(p)}/G_{(p)}$ is an ind-projective ind-scheme over $Y$, $\cT_{(p)}^{(p)}$ is an infinite-dimensional vector bundle over $Gr_{(p)}$, $\cR_{(p)}^{(p)}$ is its sub-vector space of ind-finite codimension over $Gr_{(p)}$. They are all $G_{(p)}\rtimes\bC^*$-equivariant. They also have chosen dimension theories, denoted $\rank(G_{(p)}^{(p)})$, $\rank(\cT_{(p)}^{(p)})$, $\rank(\widetilde{\cT_{(p)}^{(p)}})$ etc., which are compatible with each other in the same way as for the $\alpha_S^{S'}$, and compatible with the dimension theories on $\alpha_{\mup}^{\mup}$ in the natural way\footnote{That is, the $*$- (or $!$-)pullback along the $\mup$-fppf quotient map of the chosen dimension theory on $\alpha_{(p)}^{(p)}$ coincides with the $*$-pullback along the ind-f.p. closed embedding $X\times_{X^{\mup}}\alpha_{\mup}^{\mup} \to \alpha_{\mup}^{\mup}$ of the chosen dimension theory on $\alpha_{\mup}^{\mup}$.}. Unlike $\alpha_S^{S'}$ which is globally trivial over the coincidence-free open subset of $X^S$, these spaces $\alpha_{(p)}$, $\alpha_{(p)}^{(p)}$ are only \emph{locally} trivial away from $\{0\}$. On the level of $R$-points, they admit similar interpretations to $\alpha_S^{S'}$, only involving $\pi^*\Delta_1(y)$, $\pi^*\Delta_1^1(y)$. For example, we have:
$$
{\cR}_{(p)}^{(p)}(R):=\left\{(y,\cE,f,v)  \left|~\begin{matrix}   y\in Y(R)\hfill\\
\cE\text{ a principal }G\text{-bundle over }\pi^*\Delta_1(y)\hfill\\
 f\text{ a trivialization of }\cE\text{ over }\pi^*\Delta_1^{1}(y)\hfill\\
%g\text{ a trivialization of }\cE\text{ over }\Delta_S(x)\hfill\\
{v}\text{ an }\bbN\text{-section of }\cE\text{ such that }f(v)\text{ extends to }\pi^*\Delta_1(y)\hfill\end{matrix}\right.\right\}/\sim.
$$
The fiber of this space over $\{0\}$ is a copy of $\cR$, while the fibers over $Y-\{0\}:=Y^*$ are copies of $\cR^p$. %Note further that the group  acts on any $\alpha_{(p)}$, $\alpha_{(p)}^{(p)}$. 
Another example is 
$$
{G}_{(p)}(R):=\left\{(y,g)  \left|~\begin{matrix}   y\in Y(R)\hfill\\
%\cE\text{ a principal }G\text{-bundle over }\pi^*\Delta_1(x)\hfill\\
% f\text{ a trivialization of }\cE\text{ over }\pi^*\Delta_1^{1}(x)\hfill\\
%g\text{ a trivialization of }\cE\text{ over }\Delta_S(x)\hfill\\
%{v}\text{ an }\bbN\text{-section of }\cE\text{ such that }f(v)\text{ extends to }\pi^*\Delta_1(x)
g:\pi^*\Delta_1(y)\to G\hfill\end{matrix}\right.\right\}.
$$
The action of $G_{(p)}\rtimes\bC^*$ on $\cR_{(p)}^{(p)}$ is given as:
$$
\begin{matrix}
\hfill (y,g).(y,\cE,f,v) & = & (y,\cE,g\circ f,v)\hfill \\
\hfill z.(y,\cE,f,v) & = & (z^py,z_*\cE,z_*f,z_*v)\hfill 
\end{matrix}
$$
where for $z\in R^\times$, $z_*$ denotes the pushforward along the multiplication-by-$z$ endomorphism of $X_R$, $t\mapsto zt$, which transforms $\pi^*\Delta_{1}(y)$ into $\pi^*\Delta_{1}(z^py)$. The key new feature is that $G_{(p)}\rtimes\mup$ is a \emph{subgroup}\footnote{In fact, $G_{(p)}\rtimes\mup$ is one component of the maximal subgroup of $G_{(p)}\rtimes\bC^*$, the other being $\{0\}\times\bC^*$. Contrast with $G_S\rtimes\bC^*$, whose maximal subgroup is $G_S\cup (\{0\}\times\bC^*)$. } of $G_{(p)}\rtimes\bC^*$.\

Note that the $0$-fiber of $G_{(p)}\rtimes\bC^*\acts\cR_{(p)}^{(p)}$ is identified with $G(\cO)\rtimes\bC^*\acts\cR$ ($\bC^*$ acting in the usual way, i.e. with weight $1$ on $t\in \cO$). Meanwhile, the $1$-fiber of $G_{(p)}\rtimes\mup\acts \cR_{(p)}^{(p)}$ is identified with $G(\cO)^{\mup}\rtimes\mup\acts\cR^{\mup}$, where now $\mup$ acts in the usual cyclic way of Section \ref{sec2} (without any loop-rotation). This latter identification comes from the defining identification of $\pi^*\{1\}$ with $\mup$.\

\begin{warning}Notation $\cR^{\mup}$ means $\Map(\mup,\cR)$. This $\mup$-superscript is not to be confused with the $\mup$-superscript in $\cR_{\mup}^{\mup}$, where it indicates a set of allowed poles as in $\cR_S^{S'}$.\end{warning}

Consider the following composition $F'_\hbar: A^n\to\cA^{pn}$.

\begin{equation}\label{fhbar}
\begin{tikzcd}[column sep=6pc]
A^n = H_{n-2\rank(\cT)}^{BM,G(\cO)}(\cR,\bfp) \arrow{r}{\text{Steenrod}} & 
 H_{pn-2p\rank(\cT)}^{BM,G(\cO)^{\mup}\rtimes\mup}(\cR^{\mup},\bfp) \arrow{d}{\text{Descent isomorphism (b)}}\\
H_{pn-2-2p\rank(\cT_{(p)}^{(p)})^*}^{BM,G_{(p)}^*\rtimes\mup}((\cR_{(p)}^{(p)})^*,\bfp) \arrow{d}{\text{Specialize}}&
 \arrow{l}{\text{Restrict}}  H_{pn-2-2\rank(\cT_{(p)}^{(p)})^*}^{BM,G_{(p)}^*\rtimes\bC^*}((\cR_{(p)}^{(p)})^*,\bfp)\\
H_{pn-2\rank(\cT)}^{BM,G(\cO)\rtimes\mup}(\cR,\bfp)=\cA^{pn}. & %~~~~~~~~~~~(\ref{fhbar})
\end{tikzcd}
\end{equation}

For degree reasons it factors through the inclusion $A^*_\hbar\subset A^*_\hbar[a]=\cA^*$. We may form the graded $H^*_{G(\cO)}(*,\bfp)^{(1)}$-module $(A^*)^{(1)}$, and we have a map of $\bZ$-graded multiplicative $\bfp$-sets
$$
F_\hbar:(A^*)^{(1)}\to A_\hbar^*.
$$

\subsection{Linearity}\label{linearity!}We have the following:

\begin{prop} $F_\hbar$ is $St_{in}$-linear. That is, it is linear and transports multiplication by $r\in H^m_{G(\cO)}(*,\bfp)$ to multiplication by $St_{in}(r)\in H^{pm}_{G(\cO)\rtimes\bC^*}(*,\bfp)\subset H^{pm}_{G(\cO)\rtimes\bC^*}(*,\bfp).$\end{prop}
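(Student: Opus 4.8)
The plan is to establish the two halves of $St_{in}$-linearity separately --- additivity, and compatibility with the coefficient action --- by exploiting the structure of $F'_\hbar$ in \eqref{fhbar} as the composite of Steenrod's construction $St^{BM}$ with three linear operations: the descent isomorphism (b), restriction of equivariance along $\mup\subset\bC^*$, and specialization at $0\in Y$. First I would note that the reduction from $F'_\hbar$ to $F_\hbar$ is automatic: $F'_\hbar$ takes values in $A^*_\hbar\subset\cA^*=A^*_\hbar[a]$ for parity reasons ($A^*$ and $A^*_\hbar$ being evenly graded), and the identification of the source as $(A^*)^{(1)}$ is Lemma \ref{perf}. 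Since each of the latter three operations is additive and is a module map over the induced map of coefficient rings, both assertions come down to the behaviour of $St^{BM}$ plus some bookkeeping.

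For additivity, the key input is that $St^{BM}$ fails to be additive only by an ``averaged'' term: by construction (the Borel--Moore analogue of Proposition \ref{induced}), the discrepancy $St^{BM}(f+g)-St^{BM}(f)-St^{BM}(g)$ is the image of a class in $H_*^{BM,G(\cO)^{\mup}}(\cR^{\mup},\bfp)$ under the averaging map along the index-$p$ normal subgroup $G(\cO)^{\mup}\triangleleft G(\cO)^{\mup}\rtimes\mup$. I would then use that restriction of equivariance and specialization commute with averaging, and that the descent isomorphism (b) does too, being assembled from restriction of equivariance, co-placid restriction and change of groupoid base; moreover that under \eqref{fhbar} the subgroup $G(\cO)^{\mup}$ is carried (through $G_{(p)}^*$ over $Y^*$) onto the loop-rotation subgroup $G(\cO)\triangleleft G(\cO)\rtimes\mup$. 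Consequently the discrepancy of $F'_\hbar$ from additivity lies in the image of the averaging map $A^{pn}\to\cA^{pn}$, which over $\bfp$ is multiplication by $p$ in degree $0$, hence zero. This gives additivity of $F'_\hbar$ and hence of $F_\hbar$.

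For the coefficient action, I would use that $St^{BM}$ is monoidal over $St^H\colon H^*_{G(\cO)}(*,\bfp)\to H^*_{G(\cO)^{\mup}\rtimes\mup}(*,\bfp)$ and that the remaining three steps are module maps over the induced maps of coefficient rings, so the task reduces to identifying the resulting composite of coefficient-ring maps $H^*_{G(\cO)}(*,\bfp)\to H^*_{G(\cO)\rtimes\mup}(*,\bfp)=H^*(BG,\bfp)[a,\hbar]$ as $St_{in}$. The point is geometric: unwound on constant sheaves, the descent isomorphism, restriction and specialization amount to restriction along the twisted diagonal $X\subset X^{\mup}$, since the Beilinson--Drinfeld-type family $\alpha_{(p)}\to Y$ degenerates at $y=0$ by colliding the $p$ points at the origin of $X$ with the residual $\mup$ acting by loop rotation --- precisely the situation analysed in the Artin--Schreier discussion of Section \ref{sec2}. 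Hence the composite is $St^H$ followed by restriction to the diagonal, which (after the reinterpretation of Lemma \ref{perf}) is $St_{in}$; for $G$ a torus it is the $\hbar$-Artin--Schreier map $AS_\hbar$ of Fact \ref{torusfact}. I would finish by observing that, $A^*$ being evenly graded, only $r\in H^{\mathrm{even}}_{G(\cO)}(*,\bfp)$ act nontrivially, and that $St_{in}(r)$ then has no $a$-component (Fact \ref{torusfact}), so it lies in $H^*_{G(\cO)\rtimes\bC^*}(*,\bfp)$ and multiplication by it preserves $A^*_\hbar$.

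The hard part will be the claim in the additivity argument that the ``averaged'' character of Steenrod's discrepancy, which is manifest on the cyclic-permutation side $G(\cO)^{\mup}\rtimes\mup$, is preserved by the descent isomorphism (b) onto the loop-rotation side $G_{(p)}^*\rtimes\bC^*$, where the shape of the ambient group changes: one has to check carefully that the relevant transfer survives this change of group and ends up as the averaging map $A^*\to\cA^*$. Everything else should be a routine, if lengthy, chase through the definitions of the standard operations in equivariant Borel--Moore homology on placid ind-schemes and their mutual compatibilities, together with the explicit description of the spaces $\alpha_{(p)}$.
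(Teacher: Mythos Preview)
Your proposal follows essentially the same strategy as the paper --- split into additivity and multiplicativity, use that the three steps after $St^{BM}$ are linear, and trace the averaging and coefficient maps through. The additivity argument matches the paper's almost exactly.

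The one place where you are vaguer than the paper is the multiplicativity step. You assert that ``the remaining three steps are module maps over the induced maps of coefficient rings'', but for the specialization step this is not immediate: the groupoid $G_{(p)}\rtimes\mup$ is \emph{not} constant over $Y$, so one cannot simply say that specialization is linear over some fixed $H^*_{G(\cO)\rtimes\mup}(*,\bfp)$. The paper handles this with a concrete device you do not mention: the closed embedding $G_{\{1\}}\hookrightarrow G_{(p)}$ over $Y$, given by $(y,g)\mapsto(y,g\circ\pi)$. This subgroup \emph{is} constant with fibre $G(\cO)$; over $y=0$ it is the identity embedding, over $y=1$ it is the diagonal $G(\cO)\hookrightarrow G(\cO)^{\mup}$. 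Restricting equivariance to $G_{\{1\}}\rtimes\mup$ before specializing makes the specialization manifestly $H^*_{G(\cO)\rtimes\mup}(*,\bfp)$-linear, and the diagonal embedding at $y=1$ is exactly what produces the $\Delta^*$ in the definition of $St_{in}$. Your geometric handwave (``amount to restriction along the twisted diagonal'') is the right intuition, but the constant-subgroup trick is what makes it a proof.

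Incidentally, you locate the ``hard part'' in the averaging compatibility across the descent isomorphism (b). The paper treats that as routine --- a commutative diagram of averaging with descent and restriction --- and the real subtlety is the one just described. So your assessment of difficulty is slightly inverted.
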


\begin{proof}First we show that $F_\hbar$ is $St_{in}$-multiplicative. Recall that restriction of equivariance commutes with specialization. We have a closed embedding from the $Y$-version of $G_{\{1\}}$ to $G_{(p)}$ determined by the formula
$$
(y,g:\Delta_{\{1\}}(y)\to G)\mapsto (y,g\circ\pi: \pi^*\Delta_{\{1\}}(y)\to G).
$$
Over $0$, this is identified with the embedding $G(\cO)\to G(\cO)$, $t\mapsto t$. Over $1$ this is identified with the diagonal embedding $G(\cO)\to G(\cO)^{\mup}$. Since the restriction of equivariance along the former embedding,
$$
H_{*-2\rank(\cT)}^{BM,G(\cO)\rtimes\mup}(\cR,\bfp)\to H_{*-2\rank(\cT)}^{BM,G(\cO)\rtimes\mup}(\cR,\bfp),
$$
is an isomorphism, it follows that the map $H_{*-2-2p\rank(\cT_{(p)}^{(p)})^*}^{BM,G_{(p)}^*\rtimes\mup}((\cR_{(p)}^{(p)})^*,\bfp)\to \cA^{*}$ factors as
$$
H_{*-2-2p\rank(\cT_{(p)}^{(p)})^*}^{BM,G_{(p)}^*\rtimes\mup}((\cR_{(p)}^{(p)})^*,\bfp)\xrightarrow{\text{Restrict}} H_{*-2-2p\rank(\cT_{(p)}^{(p)})^*}^{BM,G_{\{1\}}^*\rtimes\mup}((\cR_{(p)}^{(p)})^*,\bfp)\xrightarrow{\text{Specialize}} \cA^{*}.
$$
Since $G_{\{1\}}\rtimes\mup$ is a constant group over $Y$ with fibers $G(\cO)\rtimes\mup$, this latter specialization map is $H^*_{G(\cO)\rtimes\mup}(*,\bfp)$-linear. Certainly the descent isomorphism (b) and the restriction of equivariance from $\bC^*$ to $\mup$ in the diagram defining $F'_\hbar$ commute with restriction of equivariance from $G_{(p)}$ to $G_{\{1\}}$. It follows that $F_\hbar$ is $St_{in}$-multiplicative, by definition of $St_{in}$.\

Now we show linearity. Averaging (over $\mup$) commutes with the descent isomorphism (b) and restriction of equivariance in that we have a commutative diagram:

\begin{equation*}
\begin{tikzcd}[column sep=6pc]
H_{pn-2p\rank(\cT)}^{BM,G(\cO)^{\mup}}(\cR^{\mup},\bfp) \arrow{r}{\text{Averaging}} \arrow{d}{\text{Descent isom.}} &
H_{pn-2p\rank(\cT)}^{BM,G(\cO)^{\mup}\rtimes\mup}(\cR^{\mup},\bfp) \arrow{d}{\text{Descent isom.}} \\
H_{pn-2-2\pi^*\rank(\cT_{(p)}^{(p)})^*}^{BM,\pi^*G_{(p)}^*\rtimes\bC^*}(\pi^*(\cR_{(p)}^{(p)})^*,\bfp) \arrow{r}{\text{Averaging}} \arrow{d}{\text{Restriction}} &
H_{pn-2-2\rank(\cT_{(p)}^{(p)})^*}^{BM,G_{(p)}^*\rtimes\bC^*}((\cR_{(p)}^{(p)})^*,\bfp) \arrow{d}{\text{Restriction}}\\
H_{pn-2-2\pi^*\rank(\cT_{(p)}^{(p)})^*}^{BM,\pi^*G_{(p)}^*\rtimes\mup}(\pi^*(\cR_{(p)}^{(p)})^*,\bfp) \arrow{r}{\text{Averaging}}&
H_{pn-2-2\rank(\cT_{(p)}^{(p)})^*}^{BM,G_{(p)}^*\rtimes\mup}((\cR_{(p)}^{(p)})^*,\bfp).
\end{tikzcd}
\end{equation*}

Here for the second two averaging maps we have identified $H_{pn-2-2\rank(\cT_{(p)}^{(p)})^*}^{BM,G_{(p)}^*\rtimes ?}((\cR_{(p)}^{(p)})^*,\bfp)$ with $H_{pn-2-2\pi^*\rank(\cT_{(p)}^{(p)})^*}^{BM,(\pi^*G_{(p)}^*)\rtimes (?\times\mup)}(\pi^*(\cR_{(p)}^{(p)})^*,\bfp)$ for $?=\bC^*$,$\mup$.

Since averaging commutes also with specialization, it follows that the discrepancy from additivity lies in the image of the averaging map $A^*\to\cA^*$. But this map is equal to $0$.\end{proof}

\subsection{Centrality}We have the following:

\begin{prop}$F_\hbar$ maps into the center of $A^*_\hbar$.\end{prop}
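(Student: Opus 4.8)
The plan is to realise both $F_\hbar(x)*a$ and $a*F_\hbar(x)$ as one and the same specialisation --- down to the origin --- of a single Borel--Moore homology class living on a placid ind-scheme over the two-dimensional base $Y\times X$; equality is then forced by the commutation of iterated specialisation together with the compatibility of specialisation and proper pushforward. Fix $a\in A^*_\hbar$ and $x\in A^*$, the latter viewed in $(A^*)^{(1)}$. First I would recall, following \cite{BFN} and ultimately \cite{BD}, that the convolution product on $A^*_\hbar$ arises from a one-parameter Beilinson--Drinfeld degeneration: over $X=\bG_a$ (the fusion parameter, with loop rotation $\bC^*$ acting with weight $1$) there is a suitably equivariant placid ind-scheme whose fibre away from $0$ is the external product $\cR\times\cR$ and whose fibre over $0$ is the convolution space $\cR\simtimes\cR$, proper over $\cR$ via the map $m$ that remembers only the total modification; and $b_1*b_2=m_*\,s(b_1\boxtimes b_2)$, where $s$ is the specialisation map.

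Next I would build, over $Y\times X$ (coordinates $t^p$ and $t'$, with $\bC^*$ acting with weights $p$ and $1$), a ``total'' placid ind-scheme $\cW$ whose point over $(y,t')$ consists of a $G$-bundle on the curve, trivialised away from the union of the $\mu_p$-orbit over $y$ and the extra point $t'$, together with a compatible $\bbN$-section, exactly parallel to the construction of $\alpha_{(p)}^{(p)}$ in Subsection~\ref{alpha}; together with two ``ordered'' convolution resolutions $\cZ$, $\cZ'$ of $\cW$, obtained by inserting an intermediate bundle that is modified only at $t'$ (for $\cZ$), respectively only at the $\mu_p$-orbit (for $\cZ'$). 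There are maps $\cZ\to\cW\leftarrow\cZ'$, each an isomorphism over the open locus $U\subseteq Y\times X$ where $y\neq0$ and $t'$ avoids the orbit --- over which all three are the $(p{+}1)$-fold external product $\cR^{p}\times\cR$ --- while over the origin $\cZ|_0=\cZ'|_0=\cR\simtimes\cR$ and $\cW|_0=\cR$, the two maps to $\cW|_0$ being $m$. All of this is to carry the $G_{(p)}\rtimes\bC^*$- and (after restriction) $G_{(p)}\rtimes\mu_p$-equivariant structures and compatible dimension theories, in strict analogy with Subsection~\ref{alpha}.

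On the generic fibre $\cR^{p}\times\cR$ over $U$ I would place the class $\xi:=St^{BM}(x)\boxtimes a$, where Steenrod's construction --- together with the descent isomorphism (b) and the restriction of equivariance from $\bC^*$ to $\mu_p$ that appear in the definition~\eqref{fhbar} of $F'_\hbar$ --- supplies $St^{BM}(x)$ on the first $p$ factors and $a$ on the last. Specialising $\xi$ to the origin in two stages via $\cZ$: collapsing the cluster first (sending $t^p\to0$) turns $St^{BM}(x)$ into $F'_\hbar(x)$ by the very definition of $F'_\hbar$, and --- since the remaining factor sits at $t'\neq0$, away from the collapsing locus --- leaves $\xi$ equal to $F_\hbar(x)\boxtimes a$ on $\cR\times\cR$; then sending $t'\to0$ and applying $m_*$ yields $F_\hbar(x)*a$. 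By commutation of iterated specialisation this equals the one-step specialisation of $\xi$ in $\cZ$; and since $\cZ\to\cW$ is an isomorphism over $U$, compatibility of proper pushforward with specialisation rewrites it as the specialisation, inside $\cW$, of the image of $\xi$ --- an expression that no longer remembers any ordering of the two modifications. Running the same computation through $\cZ'$ (where the cluster is the outer modification) produces $a*F_\hbar(x)$ and lands on the very same element of $A^*_\hbar$. Hence $F_\hbar(x)$ is central.

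I expect the main obstacle to be two-fold. The conceptual core is the penultimate step: one must check that the class $St^{BM}(x)$, defined via the cyclic $\mu_p$-action permuting the $p$ factors, descends to the quotient $Y\times X=(X^{\mu_p}\times X)/\mu_p$ and that collapsing the $\mu_p$-orbit inside $\cW$ genuinely implements $F'_\hbar$ on it independently of which side the extra point is attached --- this is exactly where the $\mu_p$-equivariance of Steenrod's construction (Subsection~\ref{stconst}) does its work, and one must verify that no residual Koszul sign obstructs the identification (the oddness of $p$, already exploited in Proposition~\ref{Tate}, is relevant here). The remaining work is more routine but substantial: realising $\cW,\cZ,\cZ'$ as placid ind-schemes over $Y\times X$ with all the equivariant and dimension-theoretic data, and establishing the two homological inputs invoked above --- commutation of iterated specialisation over $\bG_a\times\bG_a$, and compatibility of specialisation with proper pushforward --- in the placid-ind-scheme framework, these being the natural extensions of the compatibilities listed after Subsection~\ref{BM}.
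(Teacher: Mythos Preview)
Your approach is sound, but it differs from the paper's in one structural respect: you work over a two-parameter base $Y\times X$, whereas the paper works over the single parameter $Y$ by pinning the extra point at the origin from the outset. Concretely, the paper builds spaces $\cR_{(p)0}^{(p)0}$, $\cR_{(p)0}^{(p)}$, $\cR_{(p)0}^{0}$ over $Y$ --- the subscript ``$0$'' recording a modification locus fixed at $0\in X$, the subscript ``$(p)$'' the moving $\mu_p$-orbit --- together with a left and a right convolution ``path'' through a diagram exactly like your $\cZ,\cZ'$. Over $Y^*$ the two loci $\{0\}$ and $\pi^{-1}(y)$ are disjoint, the convolution diagram degenerates, and both paths compute the identity on $H^{BM}_*((\cR_{(p)}^{(p)})^*)\otimes_{\bfp[a,\hbar]} A^*_\hbar[a]$; over $\{0\}\subset Y$ the loci fuse and the two paths become the two orderings of the honest convolution on $\cA^*$. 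A single specialisation over $Y$, applied step-by-step to each map in the path, then shows the image of the specialisation lands in the centre.

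Your $\cZ$ and $\cZ'$ are precisely the analogues of these left and right paths, and your family restricted to the slice $\{t'=0\}\subset Y\times X$ \emph{is} the paper's family. What the second parameter buys you is a conceptually clean separation of the two collisions (cluster-to-origin and extra-point-to-origin); what it costs is the need for commutation of iterated specialisation over $\bG_a\times\bG_a$ --- which, as you rightly flag, is not among the compatibilities recorded after Subsection~\ref{BM} and would have to be established separately in the placid framework. The paper's one-parameter version sidesteps this entirely: since the extra point never moves, everything reduces to a single specialisation over $Y$, and each individual step of the homological paths (descent, restriction with supports, proper pushforward, restriction of equivariance) commutes with that one specialisation by the compatibilities already in hand.
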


\begin{proof} The idea is to adapt the proof\footnote{Which is itself an adaptation of the construction, using Beilinson-Drinfeld grassmannians, of the commutativity constraint on the Satake category, see \cite{MV}.} of commutativity of $A^*$ given in the Appendix to \cite{BFN2} to the present setup. %We shall present several placid ind-schemes in terms of their functors of points; it is straightforward to see how they may be constructed out of those introduced previously. Write $\Delta=\spec(\cO)$ for the formal neighborhood of $0$ in $X$, and $\Delta^o$ for its punctured version. 
Consider the diagram:

%%%%%%%%%%%%%%%%%

\begin{equation*}
\begin{tikzcd}
& \cR_{(p)}^{(p)}\times \cR  &\\
& \cR_{(p)0}^{(p)}\times_Y \cR_{(p)0}^{0} \arrow{u}{\beta} & \\
\widetilde{\cR}_{(p)0}^{(p)}\times_{\bbN_{(p)0}} \cR_{(p)0}^{0} \arrow{ru}{\gamma_l} \arrow{d}{\delta_l} & & \widetilde{\cR}_{(p)0}^{0} \times_{\bbN_{(p)0}} {\cR}_{(p)0}^{(p)} \arrow{lu}{\text{twist}\circ\gamma_r} \arrow{d}{\delta_r}\\
\widetilde{\cR}_{(p)0}^{(p)}\frac{\times_{\bbN_{(p)0}}}{{G_{(p)0}}} \cR_{(p)0}^{0} \arrow{rd}{\epsilon_l} & & \widetilde{\cR}_{(p)0}^{0} \frac{\times_{\bbN_{(p)0}}}{{G_{(p)0}}} {\cR}_{(p)0}^{(p)} \arrow{ld}{\epsilon_r} \\
& \cR_{(p)0}^{(p)0}. &
\end{tikzcd}
\end{equation*}

%%%%%%%%%%%%%%%%%

Notation: let $\alpha$ be as before\footnote{i.e. any of the symbols $G$, $\bbN$, $\cT$, $\cR$, $\widetilde{\cT}$, $\widetilde{\cR}$.}. We set
$$
\alpha_{(p)0}^{(p)0}:= \left(X\times_{(X^{\mup\cup\{0\}})}\alpha_{\mup\cup\{0\}}^{\mup\cup\{0\}}\right)//\mup.
$$
Here the map $X\to X^{\mup\cup\{0\}}$ is the product of the `twisted-diagonal' embedding $X\to X^{\mup}$ determined $\chi$ and the inclusion of $\{0\}$ in $\bG_a$ under the identification $X^{\mup\cup\{0\}}=X^{\mup}\times X^{\{0\}}=X^{\mup}\times\bG_a$. The action of $\mup$ is again the one which does not involve any loop-rotation. Removing the superscript $(p)$ (resp. $0$, resp. $(p)0$) from $\alpha_{(p)0}^{(p)0}$ corresponds to removing the superscript $\mup\cup$ (resp. $\cup\{0\}$, resp. $\mup\cup\{0\}$) from $\alpha_{\mup\cup\{0\}}^{\mup\cup\{0\}}$ in its defining equation. We may write the spaces of the `left path' as follows:

$$
\cR_{(p)}^{(p)}\times \cR(R)=\left\{(y,\cE,f,v,\cE_0,f_0,v_0)  \left|~\begin{matrix}   y\in Y(R)\hfill\\
\cE\text{ a principal }G\text{-bundle over }\pi^*\Delta_1(y)\hfill\\
f\text{ a trivialization of }\cE\text{ over }\pi^*\Delta_1^{1}(y)\hfill\\
{v}\text{ an }\bbN\text{-section of }\cE\text{ such that }f(v)\text{ extends}\hfill\\
~\text{ to }\pi^*\Delta_1(y)\hfill\\
\cE_0\text{ a principal }G\text{-bundle over }\Delta_1(\{0\})\hfill\\
f_0\text{ a trivialization of }\cE_0\text{ over }\Delta_1^1(\{0\})\hfill\\
{v_0}\text{ an }\bbN\text{-section of }\cE_0\text{ such that }f_0(v_0)\text{ extends}\hfill\\
~\text{ to }\Delta_1(\{0\})\hfill
\end{matrix}\right.\right\}/\sim
$$
$$
{\cR}_{(p)0}^{(p)}\times_{Y} \cR_{(p)0}^{0}(R)=\left\{(y,\cE,f,v,\cE_0,f_0,v_0)  \left|~\begin{matrix}   y\in Y(R)\hfill\\
\cE\text{ a principal }G\text{-bundle over }\pi^*\Delta_1(y)\cup\Delta_1(\{0\})\hfill\\
f\text{ a trivialization of }\cE\text{ over }\pi^*\Delta_1^{1}(y)\cup\Delta_1(\{0\})\hfill\\
{v}\text{ an }\bbN\text{-section of }\cE\text{ such that }f(v)\text{ extends}\hfill\\
~\text{ to }\pi^*\Delta_1(y)\cup\Delta_1(\{0\})\hfill\\
\cE_0\text{ a principal }G\text{-bundle over }\pi^*\Delta_1(y)\cup\Delta_1(\{0\})\hfill\\
f_0\text{ a trivialization of }\cE_0\text{ over }\pi^*\Delta_1(y)\cup\Delta_1^1(\{0\})\hfill\\
{v_0}\text{ an }\bbN\text{-section of }\cE_0\text{ such that }f_0(v_0)\text{ extends}\hfill\\
~\text{ to }\pi^*\Delta_1(y)\cup\Delta_1(\{0\})\hfill
\end{matrix}\right.\right\}/\sim
$$
$$
\widetilde{\cR}_{(p)0}^{(p)}\times_{\bbN_{(p)0}} \cR_{(p)0}^{0}(R)=\left\{(y,\cE,f,g,v,\cE_0,f_0,v_0)  \left|~\begin{matrix}   y\in Y(R)\hfill\\
\cE\text{ a principal }G\text{-bundle over }\pi^*\Delta_1(y)\cup\Delta_1(\{0\})\hfill\\
f\text{ a trivialization of }\cE\text{ over }\pi^*\Delta_1^{1}(y)\cup\Delta_1(\{0\})\hfill\\
g\text{ a trivialization of }\cE\text{ over }\pi^*\Delta_1(y)\cup\Delta_1(\{0\})\hfill\\
{v}\text{ an }\bbN\text{-section of }\cE\text{ such that }f(v)\text{ extends}\hfill\\
~\text{ to }\pi^*\Delta_1(y)\cup\Delta_1(\{0\})\hfill\\
\cE_0\text{ a principal }G\text{-bundle over }\pi^*\Delta_1(y)\cup\Delta_1(\{0\})\hfill\\
f_0\text{ a trivialization of }\cE_0\text{ over }\pi^*\Delta_1(y)\cup\Delta_1^1(\{0\})\hfill\\
{v_0}\text{ an }\bbN\text{-section of }\cE_0\text{ such that }f_0(v_0)=g(v)\hfill
\end{matrix}\right.\right\}/\sim
$$
$$
\widetilde{\cR}_{(p)0}^{(p)} \frac{\times_{\bbN_{(p)0}}}{^{G_{(p)0}}} {\cR}_{(p)0}^{0}(R)=\left\{(y,\cE,f,v,\cE_0,h_0,v_0)  \left|~\begin{matrix}   y\in Y(R)\hfill\\
\cE\text{ a principal }G\text{-bundle over }\pi^*\Delta_1(y)\cup\Delta_1(\{0\})\hfill\\
f\text{ a trivialization of }\cE\text{ over }\pi^*\Delta_1^{1}(y)\cup\Delta_1(\{0\})\hfill\\
{v}\text{ an }\bbN\text{-section of }\cE\text{ such that }f(v)\text{ extends}\hfill\\
~\text{ to }\pi^*\Delta_1(y)\cup\Delta_1(\{0\})\hfill\\
\cE_0\text{ a principal }G\text{-bundle over }\pi^*\Delta_1(y)\cup\Delta_1(\{0\})\hfill\\
h_0\text{ an isomorphism of }\cE_0\text{ with }\cE\hfill\\
~\text{ over }\pi^*\Delta_1(y)\cup\Delta_1^1(\{0\})\hfill\\
{v_0}\text{ an }\bbN\text{-section of }\cE_0\text{ such that }h_0(v_0)=v\hfill
\end{matrix}\right.\right\}/\sim
$$
and
$$
\cR_{(p)0}^{(p)0}(R)=\left\{(y,\cF,e,w)  \left|~\begin{matrix}   y\in Y(R)\hfill\\
\cF\text{ a principal }G\text{-bundle over }\pi^*\Delta_1(y)\cup\Delta_1(\{0\})\hfill\\
%\cE_2\text{ a principal }G\text{-bundle over }\pi^*\Delta_1(y)\hfill\\
e\text{ a trivialization of }\cF\text{ over }\pi^*\Delta_1^{1}(y)\cup\Delta_1^1(\{0\})\hfill\\
%g\text{ a trivialization of }\cE\text{ over }\Delta_S(x)\hfill\\
{w}\text{ an }\bbN\text{-section of }\cF\text{ such that }e(w)\text{ extends to }\pi^*\Delta_1(y)\cup\Delta_1(\{0\})\hfill\end{matrix}\right.\right\}/\sim.
$$

Here the $\{0\}$ of $\Delta_1^?(\{0\})$ denotes the fixed $R$-point $\{0\}$ of $X$. By definition, $\beta$ is the product of the co-placid map $\cR_{(p)0}^{(p)}\to \cR_{(p)}^{(p)}$ induced by the embedding $\pi^*\Delta_{1}^{1}(y)\to \pi^*\Delta_{1}^{1}(y)\cup\Delta_1(\{0\})$, and the co-placid map $\cR_{(p)0}^{0}\to Y\times \cR$ induced by the embedding $\Delta_{1}^{1}(\{0\})\to \pi^*\Delta_{1}(y)\cup\Delta^1_1(\{0\})$. The map $\gamma_l$ factors as
$$
\widetilde{\cR}_{(p)0}^{(p)}\times_{\bbN_{(p)0}} \cR_{(p)0}^{0} \xrightarrow{u_l} \widetilde{\cR}_{(p)0}^{(p)}\times_{Y} \cR_{(p)0}^{0} \xrightarrow{v_l} \cR_{(p)0}^{(p)}\times_Y \cR_{(p)0}^{0}
$$
where $v_l$ is a $G_{(p)0}$-torsor and $u_l$ fits into a `restriction with supports framework': a Cartesian diagram
$$
\begin{matrix} 
\widetilde{\cR}_{(p)0}^{(p)}\times_{\bbN_{(p)0}} \cR_{(p)0}^{0} & \xrightarrow{u_l} & \widetilde{\cR}_{(p)0}^{(p)}\times_{Y} \cR_{(p)0}^{0}\\
\downarrow&&\downarrow\\
\widetilde{\cT}_{(p)0}^{(p)}\times_{\bbN_{(p)0}} \cR_{(p)0}^{0} & \xrightarrow{u'_l} & \widetilde{\cT}_{(p)0}^{(p)}\times_{Y} \cR_{(p)0}^{0}\\
\end{matrix}
$$
such that $u_l'$ is a section of a vector bundle map
$$
\widetilde{\cT}_{(p)0}^{(p)}\times_{Y} \cR_{(p)0}^{0} = G_{(p)0}^{(p)}/G_{(p)0} \times_Y \bbN_{(p)0}\times_Y \cR_{(p)0}^{0}\to G_{(p)0}^{(p)}/G_{(p)0} \times_Y \cR_{(p)0}^{0}
$$
and whose vertical arrows are ind-f.p closed embeddings. The map $\delta_l$ is a $G_{(p)0}$-torsor, defined by
$$
(y,\cE,f,g,v,\cE_0,f_0,v_0)\mapsto (y,\cE,f,v,\cE_0,h_0=g^{-1}f_0,v_0).
$$
The map $\epsilon_l$ is ind-proper, defined by
$$
(y,\cE,f,v,\cE_0,h_0,v_0)\mapsto (y,\cF=\cE_0,e=hf_0,w=v_0).
$$ 
This describes the `left path'. The `right path' exactly mirrors it\footnote{Just exchange superscripts $\mup$, $0$, and on the level of points exchange $\cE$ with $\cE_0$, $f$ with $f_0$, $v$ with $v_0$, $g$ with $g_0$, $h$ with $h_0$ etc. We have labelled our data $\cF$, $e$, $w$ in $\cR_{(p)0}^{(p)0}$ is because in the `left path' we have $(\cF,e,w)=(\cE_0,fg^{-1}f_0,v_0)$ while in the `right path' we have $(\cF,e,w)=(\cE,f_0g_0^{-1}f,v)$.} and has all the same properties. If we restrict our diagram to $Y^*$, then the subscripts $(p)$, $0$ `split apart', and the result is rather degenerate. That is, it coincides with the restriction to $Y^*$ of:

\begin{equation*}
\begin{tikzcd}[column sep=-3pc]
& \cR_{(p)}^{(p)}\times \cR  &\\
& (\cR_{(p)}^{(p)}\times\bbN)\times_Y (\bbN_{(p)}\times\cR) \arrow{u}{\beta'} & \\
(\widetilde{\cR}_{(p)}^{(p)}\times G(\cO)\times\bbN)\times_{(\bbN_{(p)}\times\bbN)} (\bbN_{(p)}\times\cR) \arrow{ru}{\gamma'_l}  & & (\bbN_{(p)}\times_YG_{(p)}\times\widetilde{\cR}) \times_{(\bbN_{(p)}\times\bbN)} (\cR_{(p)}^{(p)}\times\bbN) \arrow{lu}{\gamma'_r} \\
\widetilde{\cR}_{(p)}^{(p)}\times G(\cO)\times\cR  \arrow{d}{\delta'_l} \arrow{u}{\veq} & & G_{(p)}\times\widetilde{\cR}\times \cR_{(p)}^{(p)} \arrow{d}{\delta'_r} \arrow{u}{\veq}\\
\cR_{(p)}^{(p)}\times \cR \arrow{rd}{\epsilon'_l} & & \cR\times {\cR}_{(p)}^{(p)} \arrow{ld}{\epsilon'_r} \\
& \cR_{(p)}^{(p)}\times \cR. &
\end{tikzcd}
\end{equation*}

Here the maps from the fourth row to the top are the obvious projection maps, while the maps from the fourth row to the bottom are the obvious action maps. If instead we restrict our diagram to $\{0\}\subset Y$, the subscripts $(p)$, $0$ `fuse' and the result is again degenerate: we get

\begin{equation*}
\begin{tikzcd}
& \cR\times \cR  &\\
& \cR\times \cR \arrow{u}{\veq} & \\
\widetilde{\cR}\times_{\bbN(\cO)} \cR \arrow{ru}{} \arrow{d}{} & & \widetilde{\cR} \times_{\bbN(\cO)} {\cR} \arrow{lu}{\text{twist}} \arrow{d}{}\\
\widetilde{\cR}\frac{\times_{\bbN(\cO)}}{G(\cO)} \cR \arrow{rd}{} & & \widetilde{\cR} \frac{\times_{\bbN(\cO)}}{G(\cO)} {\cR} \arrow{ld}{} \\
& \cR. &
\end{tikzcd}
\end{equation*}

We leave it to the reader to write out the appropriate equivariant structures implicit in the following chain of maps, and to check that the quoted dimension theories are appropriately compatible:

\begin{equation*}
\begin{tikzcd}
%H_{*-2\rank(\cT_{(p)}^{(p)})}^{BM,G_{(p)}\rtimes\mup}(\cR_{(p)}^{(p)})\otimes_{\bfp} A^*_\hbar[a] \arrow{d}{\veq}&\\
H_{*-2\rank(\cT_{(p)}^{(p)})}^{BM,G_{(p)}\rtimes\mup}(\cR_{(p)}^{(p)})\otimes_{\bfp[a,\hbar]} A^*_\hbar[a]\arrow{r}{=}&
H_{*-2\rank(\cT_{(p)}^{(p)})}^{BM,G_{(p)}\rtimes\mup}(\cR_{(p)}^{(p)})\otimes_{\bfp[a,\hbar]} H_{*-2\rank{\cT}}^{BM,G(\cO)\rtimes\mup}(\cR)\arrow{d}{\veq} \\
H_{*-2\rank(\cT_{(p)}^{(p)})-2\rank(\cT)}^{BM,(G_{(p)0}\times_Y G_{(p)0})\rtimes\mup}(\cR_{(p)}^{(p)}\times \cR) \arrow{d}{\beta^!}  &
 \arrow{l}{\text{restrict}} H_{*-2\rank(\cT_{(p)}^{(p)})-2\rank(\cT)}^{BM,(G_{(p)}\times G(\cO))\rtimes\mup}(\cR_{(p)}^{(p)}\times \cR)\\
H_{*-2\rank(\cT_{(p)0}^{(p)})-2\rank(\cT_{(p)0}^{0})}^{BM,(G_{(p)0}\times_Y G_{(p)0})\rtimes\mup}(\cR_{(p)0}^{(p)}\times_Y \cR_{(p)0}^{0}) \arrow{r}{\text{descent}}& 
H_{*-2\rank(\widetilde{\cT}_{(p)0}^{(p)})-2\rank(\cT_{(p)0}^{0})}^{BM,(G_{(p)0}\times_Y G_{(p)0}\times_Y G_{(p)0})\rtimes\mup}(\widetilde{\cR}_{(p)0}^{(p)}\times_{Y} \cR_{(p)0}^{0}) \arrow{d}{\text{restrict on }id\times_Y\text{diag}}\\
%id\times_Y\text{diag}: G_{(p)0}\times_Y G_{(p)0}\to G_{(p)0}\times_Y G_{(p)0}\times_Y G_{(p)0}\\
H_{*-2\delta_l^!\epsilon_l^*\rank(\widetilde{\cT}_{(p)0}^{(p)0})}^{BM,(G_{(p)0}\times_Y G_{(p)0})\rtimes\mup}(\widetilde{\cR}_{(p)0}^{(p)}\times_{\bbN_{(p)0}} \cR_{(p)0}^{0}) \arrow{d}{\text{descent}} &
 H_{*-2\rank(\widetilde{\cT}_{(p)0}^{(p)})-2\rank(\cT_{(p)0}^{0})}^{BM,(G_{(p)0}\times_Y G_{(p)0})\rtimes\mup}(\widetilde{\cR}_{(p)0}^{(p)}\times_{Y} \cR_{(p)0}^{0}) \arrow{l}{u_l^!}\\
 H_{*-2\epsilon_l^*\rank(\widetilde{\cT}_{(p)0}^{(p)0})}^{BM,(G_{(p)0})\rtimes\mup}(\widetilde{\cR}_{(p)0}^{(p)}\frac{\times_{\bbN_{(p)0}}}{G_{(p)0}} \cR_{(p)0}^{0}) \arrow{r}{(\epsilon_l)_*} &
 H_{*-2\rank(\widetilde{\cT}_{(p)0}^{(p)0})}^{BM,(G_{(p)0})\rtimes\mup}(\cR_{(p)0}^{(p)0}).
\end{tikzcd}
\end{equation*}\

Here we have dropped the homological coefficients `$\bfp$' for brevity. This is called the `homological left path over $Y$'. Similarly there is a `homological right path over $Y$'. Moreover, we have versions of both `homological paths' for the restrictions of our original diagram to $Y^*$, $\{0\}$, and specialization map of paths
$$
\text{`homological left path over }Y^*\text{'} \to \text{`homological left path over }\{0\}\text{'}
$$
$$
\text{`homological right path over }Y^*\text{'} \to \text{`homological right path over }\{0\}\text{'}
$$
since every step of both paths is compatible with specialization. One the one hand, both `homological paths over $Y^*$' give as their composition the identity map
$$
H_{*-2-2\rank(\cT_{(p)}^{(p)})^*}^{BM,G_{(p)}\rtimes\mup}((\cR_{(p)}^{(p)})^*)\otimes_{\bfp[a,\hbar]} A^*_\hbar[a]\to H_{*-2-2\rank(\cT_{(p)}^{(p)})^*}^{BM,G_{(p)}\rtimes\mup}((\cR_{(p)}^{(p)})^*)\otimes_{\bfp[a,\hbar]} A^*_\hbar[a].
$$
On the other hand, the `left homological path over $\{0\}$' gives as its composition the multiplication map\footnote{Indeed, this is the definition of convolution from \cite{BFN}.}
$$
A^*_\hbar[a]\otimes_{\bfp[a,\hbar]}A^*_\hbar[a]\xrightarrow{\text{`convolution'}} A^*_\hbar[a]
$$ 
while the `right homological path over $\{0\}$' gives as its composition the \emph{twisted} multiplication map
$$
A^*_\hbar[a]\otimes_{\bfp[a,\hbar]}A^*_\hbar[a]\xrightarrow{\text{`convolution'}\circ\text{twist}} A^*_\hbar[a].
$$ 
It follows that in fact the image of the specialization map 
$$
H_{*-2-2\rank(\cT_{(p)}^{(p)})^*}^{BM,G_{(p)}\rtimes\mup}((\cR_{(p)}^{(p)})^*)\otimes_{\bfp[a,\hbar]} A^*_\hbar[a]\to A^*_\hbar[a]
$$
is in the center of $A^*_\hbar[a]$. By its very definition, $F_\hbar$ factors through this map.\end{proof}

\subsection{Completion of proof}

\begin{enumerate}\item \underline{$F_\hbar$ is multiplicative.} The proof is essentially the same as the proof of centrality, but instead of keeping one copy of $\cR$ fixed and allowing the other to deform to $\cR^{\mup}$ with its cyclic $\mup$-action, we allow \emph{both} copies of $\cR$ to deform in that way. In fact it is easier because we only need one `path'. We will content ourselves with drawing the defining diagram; the conscientious reader can plug in the method of specialization.

$$
\cR_{(p)}^{(p)}\times \cR_{(p)}^{(p)}\xleftarrow{} \widetilde{\cR}_{(p)}^{(p)}\times_{\bbN_{(p)}} {\cR}_{(p)}^{(p)}\to \widetilde{\cR}_{(p)}^{(p)}\frac{\times_{\bbN_{(p)}}}{^{G_{(p)}}} {\cR}_{(p)}^{(p)}\to \cR_{(p)}^{(p)}.
$$\
\item \underline{$F_\hbar$ sends $1$ to $1$.} Note that $1\in A^*$ is the fundamental class of the fiber $\bbN(\cO)$ of $\cR$ over the base point of $Gr$. Certainly Steenrod's construction sends this to the fundamental class of the fiber $\bbN(\cO)^{\mup}$ of $\cR^{\mup}$ over the base point of $Gr^{\mup}$, and this is sent by the `descent isomorphism (b)' construction to the fundamental class of the fiber $\bbN_{(p)}^*$ of $(\cR_{(p)}^{(p)})^*$ over the base section of $Gr_{(p)}^*$. But this section extends to a base section of $Gr_{(p)}$ - namely, the trivial $G$-bundle with the trivial trivialization. The fiber of $\cR_{(p)}^{(p)}$ over this section is $\bbN_{(p)}$. Since this is a vector bundle over $Y$, specialization sends its fundamental class to the fundamental class of its zero fiber $\bbN(\cO)$, as required.\
\item \underline{$F_\hbar\mod\hbar$ is the Frobenius map.} This is essentially clear from the construction. It amounts to showing that the specialization (over $Y$) of the class $b$ in
$$
H_{*-2\rank(\cT_{(p)}^{(p)})^*}^{BM,G_{(p)}}((\cR_{(p)}^{(p)})^*)
$$
obtained by applying Steenrod's construction to $a\in (A^*)^{(1)}$, then applying descent isomorphism (b) and then restricting all the way to $G_{(p)}$-equivariance\footnote{Rather than $G_{(p)}\rtimes\mup$-equivariance. This is the same as killing $\hbar$, since the restriction from $G_{(p)}\rtimes\mup$-equivariance to $G_{(p)}$-equivariance commutes with specialization, and over the $0$ fiber is exactly the map $A^*_\hbar[a]\to A^*$ which kills $\hbar$ and $a$.}, equals $x^p$ (recall Diagram \ref{fhbar}). But by a general property of specialization, it is equal to the specialization over $X$ of the class $\pi^*(b)$ in
$$
H_{*-2\pi^*\rank(\cT_{(p)}^{(p)})^*}^{BM,\pi^*G_{(p)}^*}(\pi^*(\cR_{(p)}^{(p)})^*).
$$
Under the identifications $\pi^*(\cR_{(p)}^{(p)})^*=\cR^{\mup}\times Y^*$, $\pi^*G_{(p)}^*=G(\cO)^{\mup}\times Y^*$, $\pi^*(b)$ is just the pull-back of $a^{\boxtimes p}$ along the projection away from $Y^*$. Thus it is enough to prove the more general statement that for $a_1,\ldots,a_p\in A^*$, the convolution product $a_1\ldots a_p$ is equal to the specialization in $\pi^*\cR_{(p)}^{(p)}$ of $a_1\boxtimes\ldots\boxtimes a_p$. That is achieved by choosing an enumeration $\mup=\{1,\ldots,p\}$ and considering the restriction along the `twisted-diagonal' embedding $X\to X^{\mup}$ of the global convolution diagram (see Appendix to \cite{BFN2}):

\begin{equation*}
\begin{tikzcd}
{\cR_{\{1\}}^{\{1\}}\times\ldots\times\cR_{\{p\}}^{\{p\}}} \arrow{r} & {\widetilde{\cR}_{\mup}^{\{1\}}\times_{\bbN_{\mup}}\ldots\times_{\bbN_{\mup}}\widetilde{\cR}_{\mup}^{\{p-1\}}}\times_{\bbN_{\mup}}\cR_{\mup}^{\{p\}} \arrow{d}\\
& {\widetilde{\cR}_{\mup}^{\{1\}}\frac{\times_{\bbN_{\mup}}}{G_{\mup}}\ldots\frac{\times_{\bbN_{\mup}}}{G_{\mup}}\widetilde{\cR}_{\mup}^{\{p-1\}}}\frac{\times_{\bbN_{\mup}}}{G_{\mup}}\cR_{\mup}^{\{p\}} \arrow{d}\\
& \cR_{(\mup)}^{(\mup)}.
\end{tikzcd}
\end{equation*}\end{enumerate}

\subsection{Closing remarks}\label{closing}

\begin{enumerate}\item There is a closed embedding
$$
\begin{matrix}
\cR_1^1&\to &\cR_{(p)}^{(p)}\\
(y,\cE,f,v)&\mapsto& (y,\pi^*\cE,\pi^*f,\pi^*v)
\end{matrix}
$$
and similarly compatible closed embeddings $\alpha_1^1\to\alpha_{(p)}^{(p)}$ for any symbol $\alpha$ (see Subsection \ref{alpha}). We also have the compatible closed embeddings of groups $G_1\to G_{(p)}$, $\bbN_1\to\bbN_{(p)}$. In fact we have already used one of these to prove linearity of $F_\hbar$ in Subsection \ref{linearity!}.\ %We can use these to get a map of algebras graded $H^*_G(*,\bfp)$-algebras
%$$
%A^*\to A_\hbar^*.
%$$
%I do not yet know what to make of this in general.\ For the case $\bbN=0$, which is the subject of Part II of the present work, \

\item For large $p$, the $\bbN=0$-version of $A_\hbar$ has a name: it is\footnote{In characteristic $0$ this is due to \cite{BF}. In characteristic $p$ it requires some proof - a note will shortly be available.} the \emph{quantum Toda lattice}, denoted $Toda_\hbar$ and given as the two-sided quantum Hamiltonian reduction $N^\vee_\psi\backslash\backslash\cD_\hbar(G^\vee)//_\psi$ of the Rees algebra of crystalline differential operators, $\cD_\hbar(G^\vee)$, of the Langlands dual group $G^\vee$ over $\bfp$, with respect to a regular character $\psi$ of a maximal unipotent $N^\vee\subset G^\vee$. As a quantum Hamiltonian reduction of a ring of differential operators, it has a canonical Frobenius-constant structure. It follows from a torus localization argument that this Frobenius-constant structure coincides with the one we have produced in this paper. The ind-f.p. closed embedding $\cR\to\cT$ induces a pushforward map of $H^*_{G\times\bC^*}(*,\bfp)$-algebras
$$
A^*_\hbar\to H_{*-2\rank{\cT}}^{BM,G\times\bC^*}(\cT,\bfp) \cong H_{*}^{BM,G\times\bC^*}(Gr,\bfp).
$$
For all $p$, this map is compatible with the Frobenius-constant structure. For large $p$ this map is an embedding. So for large $p$, Theorem \ref{mainthm} can be understood as saying that the subalgebra $A^*_\hbar$ of the quantum Toda lattice contains the image of $A^*\subset Toda:=Toda_\hbar/\hbar$ under the canonical Frobenius-constancy map $Toda^{(1)}\to Toda_\hbar$.\ % However, since $A_\hbar^*$ is not well understood as a subalgebra of $Toda_\hbar$, it seems unlikely that there is a simpler proof of Theorem \\

\item\label{funexa} \underline{An example.} Let $G=\bC^*$, $\bbN=\bC_{-r}$, $r\geq0$. Then on $\bC$-points we identify:
\begin{enumerate}\item $Gr=\bZ$\
\item $\cT=\bZ\times \bC_{-r}[[t]]$\
\item $\cR= \bZ_{\leq0}\times\bC_{-r}[[t]]\cup \{1\}\times t^{r}\bC_{-r}[[t]]\cup \{2\}\times t^{2r}\bC_{-r}[[t]]\cup\ldots$.\end{enumerate}

For $\bbN=0$, $A_\hbar$ is the Weyl algebra $\bfp[\hbar]\langle x^{\pm},\partial\rangle/([\partial,x]=\hbar)$. The equivariant BM homology of a point $n\in\bZ$ is identified with $\bfp[\hbar,x\partial].x^n$. It is a direct calculation that $F_\hbar$ is the map
$$
\begin{matrix}x^{(1)}&\mapsto &x^p\hfill\\
y^{(1)} &\mapsto&\partial^p\hfill\\
(xy)^{(1)}&\mapsto& x^p\partial^p=\prod_{i=0}^{p-1}(x\partial-i\hbar)=(x\partial)^p-\hbar^{p-1}x\partial=AS_\hbar(x\partial).\end{matrix}
$$\
Here $y=\partial\mod\hbar$. For $\bbN=\bC_{-r}$ with $r\geq0$, $A_\hbar$ is the reduction modulo $p$ of the subalgebra of the integral Weyl algebra $\bZ[\hbar]\langle x^{\pm},\partial\rangle/([\partial,x]=\hbar)$ with $\bfp[\hbar,x\partial]$-basis
$$
\ldots x^{-2},x^{-1},1, \left(\prod_{i=1}^{r}(rx\partial-i\hbar)\right)x,  \left(\prod_{i=1}^{2r}(rx\partial-i\hbar)\right)x^{2},\ldots.
$$
It satisfies $(\prod_{i=1}^{nr}(rx\partial-i\hbar)x^n)(\prod_{i=1}^{mr}(rx\partial-i\hbar)x^m)=\prod_{i=1}^{(m+n)r}(rx\partial-i\hbar)x^{m+n}$. Note that this is a subalgebra of the mod $p$ Weyl algebra if and only if $p$ does not divide $r$. It is again a direct computation that $F_\hbar$ is the map 
$$
\begin{matrix}(x^{-1})^{(1)}&\mapsto &x^{-p}\hfill\\
((rxy)^rx)^{(1)}&\mapsto& \prod_{i=1}^{pr}(rx\partial-i\hbar)x^{p}\hfill\\
(xy)^{(1)}&\mapsto& \prod_{i=0}^{p-1}(x\partial-i\hbar).\end{matrix}
$$
It is an interesting exercise to check that these are really central (of course their images in the mod $p$ Weyl algebra are).\

\item For large $p$ one can prove the centrality and multiplicativity of the map $F_\hbar$, constructed in Subsection \ref{alpha}, via torus localization in conjunction with the above example. But Theorem \ref{mainthm} is true for all odd primes $p$. In fact, the same construction works for $p=2$ but one has to be a little careful to account for the fact that $a^2=\hbar$ in that case.\

\item Suppose that the action of $G$ on $N$ extends to an action of a normalizing supergroup $\widetilde{G}$ of $G$. Then the same proof shows that the corresponding flavor deformation is also a Frobenius-constant quantization.\

\item In \cite{BFN2}, an algebra ind-object $\Omega_\hbar$ of the Satake category $D^b_{G(\cO)\rtimes\bC^*}(Gr,\bC)$ is constructed; its cohomology algebra is the quantum Coulomb branch. The commutativity of the Coulomb branch corresponds to commutativity of the image algebra $\Omega$ of $\Omega_\hbar$ in $D^b_{G(\cO)}(Gr,\bC)$. It is also possible, by essentially the same method given in the Appendix to \emph{loc. cit.}, to tell the same story with $\bfp$ coefficients. In Part II, we will use the \emph{functor} $F_\hbar$ of Theorem \ref{hoho} to upgrade the Frobenius-constant structure in the same way.
%We expect that the method of the present paper can be used then to construct a map of algebras

\end{enumerate}

\section{$K$-theoretic version}\label{sec4}

\subsection{$K$-theory and $K$-homology}Let $X$ be a scheme over some base $B$ over $\bC$ and let $\cG$ be a (affine, pro-smooth) groupoid scheme over $B$ acting on $X$. We have the \emph{$\cG$-equivariant $K$-homology} of $X$:
$$
K^{\cG}(X):=K_0(D^b_{\cG} \Coh(X))
$$
which is by definition the Grothendieck group of $D^b_{\cG}\Coh(X)$, the $\cG$-equivariant derived category of complexes of sheaves on $X$ with bounded, coherent cohomology sheaves. We have also the \emph{$\cG$-equivariant $K$-theory} of $X$:
$$
K_{\cG}(X):=K_0(\Perf_{\cG}(X))
$$
which is by definition the Grothendieck group of the full subcategory $\Perf_{\cG}(X)$ of $D^b_{\cG}\Coh(X)$ consisting of perfect complexes. We recall some basic facts (see \cite{CG}):
\begin{enumerate}\item $K_{\cG}(X)$ forms a ring, since $\Perf_{\cG}(X)$ is monoidal. The unit element is given by the class of the structure sheaf.\
\item $K^{\cG}(X)$ is a module over $K_{\cG}(X)$, since $D^b_{\cG}\Coh(X)$ is a module category over $\Perf_{\cG}(X)$. Under suitable conditions\footnote{For instance, if $X$ is smooth and $\cG$ is a connected linear algebraic group.}, every equivariant coherent sheaf has a bounded equivariant resolution by vector bundles, so that the defining functor $\Perf_{\cG}(X)\to D^b_{\cG}(X)$ is an equivalence, and in particular the map from $K_{\cG}(X)$ to $K^{\cG}(X)$ is an isomorphism. But this will certainly not be the case in most of our examples.\
\item Let $f:X\to Y$ be a $\cG$-equivariant map of schemes over $B$. We have a monoidal pullback map
$$
f^*:\Perf_{\cG}(Y)\to\Perf_{\cG}(X)
$$
hence the ring map $f^*:K_{\cG}(Y)\to K_{\cG}(X)$. If the derived functor $f^*:D^b_{\cG}\QCoh(Y)\to D^b_{\cG}\QCoh(X)$ sends\footnote{For instance, $f$ may be flat, or a regular closed embedding.} $D^b_{\cG}\Coh(Y)$ to $D^b_{\cG}\Coh(X)$, then we also get a map
$$
f^*:K^{\cG}(Y)\to K^{\cG}(X)
$$
of $K_{\cG}(Y)$-modules.\
\item If instead the derived functor $f_*:D^+_{\cG}\Sh(X)\to D^+_{\cG}\Sh(Y)$ sends\footnote{For instance, if $f$ is proper and $Y$ is finite type.} $D^b_{\cG}\Coh(X)$ to $D^b_{\cG}\Coh(Y)$, then we have a map
$$
f_*:K^{\cG}(X)\to K^{\cG}(Y)
$$
of $K_{\cG}(Y)$-modules.\
\item There is also a version of specialization in equivariant $K$-homology, due to \cite{VV}. Let $f:X\to B\times\bG_a$ be $\cG$-equivariant map, where the factor $\bG_a$ is a `multiplicity space' ignored by the action of $\cG$. Let $i:X_0\to X$ denote the inclusion of the fiber of $B\times\{0\}$, and $j:X^o\to X$ denote the inclusion of the complement. Assume that $i$ is a regular embedding. Then the map $i^*i_*$ on $K$-homology vanishes, and so we get a map
$$
K^{\cG}(X)/i_*K^{\cG}(X_0)\to K^{\cG}(X_0).
$$
Note that the restriction map $j^*:K^{\cG}(X)\to K^{\cG}(X^o)$ has kernel $i_*K^{\cG}(X_0)$, so we get an injection $K^{\cG}(X)/i_*K^{\cG}(X_0)\to K^{\cG}(X^o)$. Assume that this injection is also a surjection\footnote{For instance, if $X$ is quasi-projective.}. Then we have obtained a map
$$
s: K^{\cG}(X^o)\to K^{\cG}(X_0)
$$
which is the promised \emph{specialization} map.\
\item There is also a version of restriction with supports. Suppose $f:X\to Y$ is a $\cG$-equivariant regular closed embedding, and $g:Z\to Y$ is an arbitrary $G$-equivariant map. Then $f_*\cO_X$ is isomorphic to an object of $\Perf_{\cG}(Y)$, so that $g^*f_*\cO_X$ is isomorphic\footnote{Without regularity assumption, it is just some bounded above equivariant coherent complex.} to an object of $\Perf_{\cG}(Z)$. Moreover, this perfect complex is set-theoretically supported on $W:=X\times_YZ$, i.e. its restriction to the complement of $W$ in $Z$ is isomorphic to $0$. Thus tensoring with $g^*f_*\cO_X$ gives an exact functor:
$$
(-)\otimes_{\cO_Z}^{\bL}g^*f_*\cO_X: D^b_{\cG}\Coh(Z)\to D^b_{\cG}\Coh(Z)_{W}.
$$
The RHS is the full subcategory of $D^b_{\cG}\Coh(Z)$ consisting of complexes set-theoretically supported on $W$; the pushforward functor $D^b_{\cG}\Coh(W)\to D^b_{\cG}\Coh(Z)$ factors through this category. The resulting functor $D^b_{\cG}\Coh(W)\to D^b_{\cG}\Coh(Z)_W$ induces\footnote{Since the embedding $W\to Z$ is f.p., being the base change of the f.p. embedding $X\to Y$.} an isomorphism in $K$-homology:
$$
K^{\cG}(W)\xrightarrow{\sim} K_0(D^b_{\cG}\Coh(Z)_{W}).
$$
Thus, we have produced a map
$$
K^{\cG}(Z)\to K^{\cG}(W)
$$
which is the promised \emph{restriction with supports}. \
\item Change of groupoid base, restriction of equivariance, averaging work exactly as for equivariant Borel-Moore homology, see subsection \ref{BM}.
\end{enumerate}

The same compatibilities which were used in the previous section to extend the analogous procedures in equivariant Borel-Moore homology to the case of ind-schemes hold just as well for equivariant $K$-homology. Thus we are able to define the $K$-theoretic versions of the Coulomb branch and the quantum Coulomb branch by using precisely the same underlying geometry: we have rings (under convolution):
$$
KA:=K^{G(\cO)}(\cR)
$$
and
$$
KA_q:=K^{G(\cO)\rtimes\bC^*}(\cR)
$$
which receive ring maps from, respectively
$$
K_{G(\cO)}(*) = K_G(*) = R(G)
$$
and 
$$
K_{G(\cO)\rtimes \bC^*}(*) = K_{G\times\bC^*}(*) = R(G)[q,q^{-1}].
$$
Here $R(G)$ is the (integral) representation ring of $G$. For a maximal torus $T$ of $G$, we have $R(G)=\bZ[\bX^\bullet(T)]^W$ where $W$ is the Weyl group. Moreover, the various compatibilities between specialization and the other procedures hold here as in the case of Borel-Moore homology (see subsection \ref{BM}), so that the ring structure on $KA$ may also be defined using specialization on the appropriate Beilinson-Drinfeld Grassmannian, and is commutative. Also, $KA$ is free over $R(G)$, $KA_q$ is free over $R(G)[q,q^{-1}]$ with respect to both left and right multiplication, $q$ is in the center $Z(KA_q)$ of $KA_q$, and $KA=KA_1:=KA_q|_{q=1}$. We will show:

\begin{thm}\label{thmk}Fix a positive integer $n$ and a primitive $n^{th}$ root of unity $\zeta$. Then there is an injective map of algebras
$$
KA\to Z(KA_\zeta).
$$
Here $KA_\zeta:=KA_q/\Phi_n(q)$ where $\Phi_n$ is the $n^{th}$ cyclotomic polynomial. Equivalently, since $KA_q$ is free over $\bZ[q,q^{-1}]$, this is the same as the subalgebra $1\otimes KA_q$ of $\bC_\zeta\otimes_{\bZ[q,q^{-1}]}KA_q$, where $\bC_\zeta$ is the $\bZ[q,q^{-1}]$-algebra whose underlying ring is $\bC$ and in which $q$ acts as $\zeta$.
\end{thm}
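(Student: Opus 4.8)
The plan is to transcribe the proof of Theorem \ref{mainthm} into $K$-homology, replacing Steenrod's construction by what the introduction calls the \emph{Adams construction} and replacing the odd prime $p$ by the integer $n$ (no longer assumed prime). The Adams construction is the $n$-th external tensor power
$$
(-)^{\boxtimes n}:D^b_{\cG}\Coh(Z)\to D^b_{\cG^{\mu_n}\rtimes\mu_n}\Coh(Z^{\mu_n}),
$$
of bounded coherent complexes, endowed with the cyclic $\mu_n$-equivariant structure (with Koszul signs) exactly as in Subsection \ref{stconst}. Since we work over a field, $(-)^{\boxtimes n}$ preserves quasi-isomorphisms, so it descends to derived categories and, passing to $K_0$, induces operations $St^K:K^{\cG}(Z)\to K^{\cG^{\mu_n}\rtimes\mu_n}(Z^{\mu_n})$ compatible with pullback, proper pushforward and external products. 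As in Propositions \ref{induced} and \ref{Frobmult}, $St^K$ is power-multiplicative for the $n$-th external power on $K_{\cG}(*)$, and the deviation $St^K(f+g)-St^K(f)-St^K(g)$ is, by the same orbit-counting argument, a sum of classes each induced from a proper subgroup $\mu_d\subsetneq\mu_n$ ($d\mid n$). One key simplification relative to Section \ref{sec2}: since $K$-homology is a $K_0$-group, we need only additivity of $St^K$ on $K_0$, never exactness on triangles, so there is no analogue of Proposition \ref{Tate} to establish and, correspondingly, no reason to require $n$ prime.

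Next I would set $Y:=X/\!/\mu_n=\bG_a$ (parameter $t^n$), $\pi:X\to Y$ the $n$-th power map, and form the deformations $\alpha_{(n)}$, $\alpha_{(n)}^{(n)}$ over $Y$ for $\alpha\in\{G,\bbN,\cT,\cR,\widetilde{\cT},\widetilde{\cR}\}$ exactly as in Subsection \ref{alpha}; as there, $G_{(n)}\rtimes\mu_n$ is a genuine group scheme over $Y$, the $0$-fibre of $G_{(n)}\rtimes\bC^*\acts\cR_{(n)}^{(n)}$ is $G(\cO)\rtimes\bC^*\acts\cR$, and the $1$-fibre of $G_{(n)}\rtimes\mu_n\acts\cR_{(n)}^{(n)}$ is $G(\cO)^{\mu_n}\rtimes\mu_n\acts\cR^{\mu_n}$. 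All the $K$-homological operations needed for Section \ref{sec3}---descent, restriction of equivariance, the Vasserot--Varagnolo specialization of \cite{VV} (applicable since $\cR\hookrightarrow\cR_{(n)}^{(n)}$ is a Cartier-divisorial, hence regular, embedding, just as in the Borel--Moore case), restriction with supports, proper pushforward---and their mutual compatibilities are available. I then define $F_\zeta:KA\to KA_\zeta$ by the composite
\begin{align*}
KA=K^{G(\cO)}(\cR)&\xrightarrow{St^K}K^{G(\cO)^{\mu_n}\rtimes\mu_n}(\cR^{\mu_n})\xrightarrow[\sim]{\text{descent}}K^{G_{(n)}^*\rtimes\bC^*}\big((\cR_{(n)}^{(n)})^*\big)\\
&\to K^{G_{(n)}^*\rtimes\mu_n}\big((\cR_{(n)}^{(n)})^*\big)\xrightarrow{\ \text{spec.}\ }K^{G(\cO)\rtimes\mu_n}(\cR)\twoheadrightarrow KA_\zeta,
\end{align*}
where the fourth arrow restricts equivariance from $\bC^*$ to $\mu_n$, the specialization is over $Y$, and the last arrow is $K^{G(\cO)\rtimes\mu_n}(\cR)=KA_q/(q^n-1)\twoheadrightarrow KA_q/\Phi_n(q)=KA_\zeta$ (the $K$-analogue of $\cA^*=H^*_{\mup}(*,R)\otimes_{R[\hbar]}A_\hbar^*$, reduced further mod $\Phi_n$). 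For additivity one argues as in Subsection \ref{linearity!}: every induction $\Ind_{\mu_d}^{\mu_n}$ commutes with descent, restriction of equivariance and specialization, so the deviation from additivity lands in the image of $\Ind_{\mu_d}^{\mu_n}:KA_q/(q^d-1)\to KA_q/(q^n-1)$ for various $d\mid n$, $d<n$; since $\Res^{\mu_n}_{\mu_d}$ is surjective, this image is the ideal generated by $\Ind_{\mu_d}^{\mu_n}(1)=\sum_{j}q^{jd}=(q^n-1)/(q^d-1)$, which is divisible by $\Phi_n(q)$ (as $\Phi_n\mid(q^n-1)/(q^d-1)$ for $d\mid n$, $d<n$, the case $d=1$ also covering the plain averaging map $KA\to KA_q/(q^n-1)$). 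Hence the deviation dies in $KA_\zeta$ and $F_\zeta$ is additive, and $R(G)$-linearity follows exactly as for $St_{in}$-linearity, using the closed embedding of the $Y$-version of $G_{\{1\}}$ into $G_{(n)}$. (The case $n=1$ is trivial, since then $KA_\zeta=KA$ is commutative.)

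Centrality, multiplicativity and unitality of $F_\zeta$ are then proved word for word as in Subsections \ref{linearity!}--\ref{closing}, using the $K$-homological versions of the Beilinson--Drinfeld fusion/convolution diagrams: for centrality one keeps one copy of $\cR$ fixed while deforming the other to $\cR^{\mu_n}$ with its cyclic $\mu_n$-action, notes that over $Y^*$ the ``left'' and ``right'' homological paths both compute the identity while over $\{0\}$ they compute convolution and its twist, and invokes compatibility of specialization with every intervening map to conclude that the image of $F_\zeta$ is central; multiplicativity deforms both copies of $\cR$ simultaneously (only one path is needed); and the unit maps to the unit because the fundamental class of the fibre $\bbN(\cO)\subset\cR$ over the base point of $Gr$ extends over the base section of $Gr_{(n)}$ and specializes to the fundamental class of $\bbN(\cO)$.

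Finally, injectivity---the one point genuinely beyond the Borel--Moore situation---I would obtain by torus localization. Restriction of equivariance makes $F_\zeta$ compatible with the maps to $T(\cO)$-equivariant $K$-homology, and, after inverting the equivariant parameters (passing to a regular cocharacter of a maximal torus $T\subset G$), Thomason localization together with the abelianization of \cite{BFN} identifies the localized $KA$ with an explicit Weyl-invariant piece of a quantum difference-operator algebra over $\mathrm{Frac}\,R(T)$, on which $F_\zeta$ acts as a product of rank-one Adams maps $x\mapsto x^n$, $D\mapsto D^n$---manifestly injective, as the rank-one computation of Subsection \ref{closing} already shows. Since $KA$ is free over $R(G)$ and $R(G)$ injects into its localization, $KA$ injects into the localized algebra, and injectivity of $F_\zeta$ follows. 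I expect the main obstacle to be not any single step but the cumulative bookkeeping of the $K$-homological six-operation-type compatibilities---especially verifying that the Vasserot--Varagnolo specialization commutes with descent, restriction with supports and proper pushforward in enough generality to port the BFN convolution argument---together with supplying the torus-localization/abelianization input required for injectivity; the remainder is a faithful transcription of Sections \ref{sec2} and \ref{sec3}.
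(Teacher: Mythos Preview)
Your proposal is correct and follows essentially the same route as the paper: the Adams $n$-th power construction, descent, restriction of equivariance from $\bC^*$ to $\mu_n$, and specialization over the family $\cR_{(n)}^{(n)}$, with the deviation from additivity being induced from proper subgroups $\mu_d\subsetneq\mu_n$ and hence dying modulo $\Phi_n(q)$; centrality and multiplicativity are then transported verbatim from the Borel--Moore argument. Your torus-localization sketch for injectivity in fact goes beyond what the paper supplies---its proof section constructs the map and argues linearity and centrality but does not explicitly address injectivity.
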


The proof is essentially the same as for the quantum Coulomb branch, except that:
\begin{remark} We must generalize from $\cR^{\mup}$, $\cR_{(p)}^{(p)}$ etc. to $\cR^{\mu_n}$, $\cR_{(n)}^{(n)}$, which are defined exactly as before by replacing any instance of $p$ with $n$. Indeed we never used that $p$ was prime in any of our previous constructions, nor in any of our proofs \emph{except for questions of linearity}. So for instance it is true that we have maps from the $\mod~n$ rings:
$$
F_{\hbar;n}:A^*\to Z(A^*_\hbar)
$$
which lift the $n^{th}$ power map $A^*\to A^*$; to linearize these maps, we have to kill all non-unit factors of $n$. If $n$ is not a prime power, this means we have to kill everything, so we do not obtain an interesting linear map. If $n=p^d$ is a prime power, this amounts to killing $p$, and the resulting map $F_{\hbar;p^d}\mod~p$ is the composition of $F_{\hbar;p}$ with the $(d-1)^{th}$ power of the Frobenius endomorphism of $A\mod~p$, so gives nothing new. However, the map of Theorem \ref{thmk} is a linear map between algebras free over $\bZ$, and is something genuinely different for all $q$. \end{remark}

\subsection{Adams operations}

Let $X$, $B$, $\cG$ be as in the previous subsection, and $n$ be a positive integer. We have a monoidal (nonlinear) functor
$$
St:D^b_{\cG}Coh(X)\to D^b_{\cG^{\mu_n}\rtimes\mu_n}Coh(X^{\mu_n})
$$
whose composition with the functor $D^b_{\cG^{\mu_n}\rtimes\mu_n}Coh(X^{\mu_n})\to D^b_{\cG^{\mu_n}}Coh(X^{\mu_n})$ which forgets the $\mu_n$-equivariant structure coincides with the $n^{th}$ external tensor power functor. The construction is exactly the same as Steenrod's construction of subsection \ref{stconst}, except we work with coherent complexes rather than constructible ones (and with $n$ rather than $p$). Proposition \ref{induced} also holds in this situation with the sole caveat that by `is an induced map' we mean `is a sum of maps induced from various proper subgroups of $\mu_n$' (rather than only from the trivial subgroup). The analogous fact holds also for objects, so we get linear maps
$$
Ad^n: K^{\cG}(X)\to K^{\cG^{\mu_n}\rtimes\mu_n}(X^{\mu_n})/I
$$
where $I$ is the subgroup of $K^{\cG^{\mu_n}\rtimes\mu_n}(X^{\mu_n})$ spanned by all classes of $\cG^{\mu_n}\rtimes\mu_n$-equivariant complexes induced from $\cG^{\mu_n}\rtimes\Gamma$-equivariant complexes, for some proper subgroup $\Gamma$ of $\mu_n$. Furthermore, the functor $St$ preserves perfectness: we have
$$
St:\Perf_{\cG}(X)\to \Perf_{\cG^{\mu_n}\rtimes\mu_n}(X^{\mu_n})
$$
and thus linear maps
$$
Ad_n: K_{\cG}(X)\to K_{\cG^{\mu_n}\rtimes\mu_n}(X^{\mu_n})/J
$$
where $J$ is the subgroup of $K_{\cG^{\mu_n}\rtimes\mu_n}(X^{\mu_n})$ spanned by all classes of $\cG^{\mu_n}\rtimes\mu_n$-equivariant complexes induced from $\cG^{\mu_n}\rtimes\Gamma$-equivariant \emph{perfect} complexes, for some proper subgroup $\Gamma$ of $\mu_n$. In fact, $J$ is an ideal (by the projection formula), $I$ is a $J$-stable submodule for the same reason, $Ad_n$ is a map of rings, and $Ad^n$ is a map of $Ad_n$-modules.\

\begin{remark}[True Adams operations]Induction commutes with restriction, so we have a ring map
$$
K_{\cG}(X)\xrightarrow{Ad_n} K_{\cG^{\mu_n}\rtimes\mu_n}(X^{\mu_n})/J\xrightarrow{\Delta^*} K_{\cG\times\mu_n}(X)/J' = K_{\cG}(X)[q,q^{-1}]/\Phi_n(q).
$$
Here $J'$ is the subgroup of $K_{\cG\times\mu_n}(X)=K_{\cG}(X)[q,q^{-1}]/(q^n-1)$ spanned by induced classes. This equals the ideal generated by the elements $\sum_{j=1}^dq^{nj/d}$ for all $d>1$ dividing $n$; and the lcf of these is $\Phi_n(q)$. By the splitting principle, the image of this map is contained in 
$$
K_{\cG}(X)\subset K_{\cG}(X)[q,q^{-1}]/\Phi_n(q).
$$ The resulting ring endomorphism of $K_{\cG}(X)$ is the $n^{th}$ Adams operation. The relevant example for us is with $\cG=G(\cO)$, $X=*$. Fix a maximal torus $T$ of $G$; then the $n^{th}$ Adams operation is identified with the ring map
$$
\bZ[\bX^\bullet(T)]^W\to \bZ[\bX^\bullet(T)]^W
$$
which sends a $W$-invariant sum $\sum_i\chi_i$ of characters $\chi_i$ to the $W$-invariant sum $\sum_i\chi_i^n$.
\end{remark}

\subsection{Proof of Theorem \ref{thmk}}

The map in question is constructed as in equation \ref{fhbar}: as the composition

\begin{equation}
\begin{tikzcd}[column sep=6pc]
KA = K^{G(\cO)}(\cR) \arrow{r}{Ad^n} & 
K^{G(\cO)^{\mu_n}\rtimes\mu_n}(\cR^{\mu_n})/I \arrow{d}{\text{Descent isomorphism (b)}}\\
K^{G_{(n)}^*\rtimes\mu_n}((\cR_{(n)}^{(n)})^*)/I'' \arrow{d}{\text{Specialize}}&
 \arrow{l}{\text{Restrict}}  K^{G_{(n)}^*\rtimes\bC^*}((\cR_{(n)}^{(n)})^*)/I'\\
K^{G(\cO)\rtimes\mu_n}(\cR)/I'''. & %~~~~~~~~~~~(\ref{fhbar})
\end{tikzcd}
\end{equation}

Here:
\begin{enumerate}\item $I$ is the ideal of $K^{G(\cO)^{\mu_n}\rtimes\mu_n}(\cR^{\mu_n})$ spanned by all classes induced from $K^{G(\cO)^{\mu_{n}}\rtimes\mu_{n/d}}(\cR^{\mu_n})$, for some $d>1$ dividing $n$;\
\item $I'$ is the ideal of $K^{G_{(n)}^*\rtimes\bC^*}((\cR_{(n)}^{(n)})^*)$ corresponding to $I$ under the descent isomorphism; it is equal to the ideal spanned by all classes pushed forward from $K^{\bC\times_{\bC}G_{(n)}^*\rtimes\bC^*}((\bC\times_{\bC}\cR_{(n)}^{(n)})^*)$, for some non-trivial $\bC^*$-equivariant cover\footnote{Recall that in this situation, the base copy of $\bC$ has the action of $\bC^*$ of weight $n$, so a $\bC^*$-equivariant non-trivial cover $\bC\to\bC$ is the $d^{th}$-power map for some $d>1$ dividing $n$.} $\bC\to\bC$;\
\item $I''$ is the image of $I'$ under restriction; it is the ideal of $K^{G_{(n)}^*\rtimes\mu_n}((\cR_{(n)}^{(n)})^*)$ spanned by all classes pushed forward from $K^{\bC\times_{\bC}G_{(n)}^*\rtimes\mu_n}((\bC\times_{\bC}\cR_{(n)}^{(n)})^*)$, for some non-trivial $\bC^*$-equivariant cover $\bC\to\bC$. Note that for the degree $d$ equivariant cover we have $K^{\bC\times_{\bC}G_{(n)}^*\rtimes\mu_n}((\bC\times_{\bC}\cR_{(n)}^{(n)})^*) = K^{G_{(n)}^*\rtimes\mu_{n/d}}((\cR_{(n)}^{(n)})^*)$, and pushing forward along $\bC\to\bC$ corresponds to inducing from $\mu_{n/d}$-equivariance to $\mu_n$-equivariance;\
\item $I'''$ is the image of $I''$ under specialization; using the second description\footnote{The first is inapplicable, since there is no specialization for $K^{G_{(n)}^*\rtimes\mu_n}((\cR_{(n)}^{(n)})^*)$ over the non-trivial cover $\bC$, since $\mu_n$ acts non-trivially on the base.} of $I''$ given above, we see that this is the ideal of $K^{G(\cO)\rtimes\mu_n}(\cR)$ spanned by all classes induced from $K^{G(\cO)\rtimes\mu_{n/d}}(\cR)$ for some $d>1$ dividing $n$.\end{enumerate}

Now on the one hand, by the projection formula we see that the composition
$$
K^{G(\cO)\rtimes\mu_{n}}(\cR)\xrightarrow{\text{restriction}}K^{G(\cO)\rtimes\mu_{n/d}}(\cR)\xrightarrow{\text{induction}}K^{G(\cO)\rtimes\mu_{n}}(\cR)
$$
coincides with multiplication by the class $\sum_{j=1}^dq^{nj/d}$. On the other hand, it is a consequence of the `cellularity' of $\cR$ that the restriction of equivariance $K^{G(\cO)\rtimes\mu_{n}}(\cR)\to K^{G(\cO)\rtimes\mu_{n/d}}(\cR)$ is surjective; so the ideal $I'''$ coincides with the ideal generated by the sums $\sum_{j=1}^dq^{nj/d}$, whose lowest common factor is $\Phi_n(q)$. For the same reason, the restriction
$$
K^{G(\cO)\rtimes\bC^*}(\cR)\to K^{G(\cO)\rtimes\mu_{n}}(\cR)
$$
is also surjective, and realizes $K^{G(\cO)\rtimes\mu_{n}}(\cR)\cong K^{G(\cO)\rtimes\bC^*}(\cR)/(q^n-1)$. Therefore we have produced the map
$$
KA\to KA_q/\Phi_n(q)=KA_\zeta.
$$
This map is linear by construction. The proof that it is a map of $Ad_n$-algebras, and lands in the center, is just as for the Borel-Moore homology.

\begin{example}We return to the situation of subsection \ref{closing}, Example \ref{funexa}. We have $R(G)=\bZ[y,y^{-1}]$ and, for $\bbN=0$, $KA_q$ is the $\bZ[y,y^{-1},q,q^{-1}]$-algebra
$$
\cO(T^{\vee\vee})\#_q\cO(T^{\vee}):=\bZ[y,y^{-1},q,q^{-1}]\langle x,x^{-1}\rangle /(yx=qxy).
$$
For $\bbN=\bC_{-r}$, $r\geq 0$, $KA_q$ is given instead by the subalgebra with basis
$$
\ldots,x^{-2},x^{-1},1,\left(\prod_{i=0}^{r-1}(1-y^r/q^i)\right)x, \left(\prod_{i=0}^{2r-1}(1-y^r/q^i)\right)x^2,\ldots
$$
as a left (or right) $\bZ[y,y^{-1},q,q^{-1}]$-module. The map of Theorem \ref{thmk} sends
$$
\begin{matrix}x&\mapsto & x^n\\
y&\mapsto& y^n\end{matrix}
$$
which are central\footnote{Thus for $\bbN=0$ it suffices to set $q^n=1$ to obtain such a map. Why?} when $q^n=1$, in particular when $q=\zeta$ is a primitive $n^{th}$ root of unity. It sends $(1-y^r)^rx$ to
$$
(1-y^{nr})^rx^n = \left(\prod_{i=0}^{n-1}(1-y^r/\zeta^i)^r\right)x^n = \left(\prod_{i=0}^{nr-1}(1-y^r/\zeta^i)\right)x^n,
$$
which is indeed an element of the appropriate subalgebra.\end{example}

\begin{remark}[$\bF_1$?] If we set $n=p^d$, and consider the rings $KA$, $KA_q/\Phi_n(q)$ modulo $p$, then we have $\Phi_{p^d}(1)=p=0$ so that $KA_q/\Phi_{p^d}(q)$ becomes a $\bF_p[q]/\Phi_{p^d}(q)$-quantization of $KA$, and the map of Theorem \ref{thmk} is its Frobenius-constant structure. However, for other values of $n$ there is no obvious analogue of this fact; and already for $n$ a prime power we had to lose some information (by killing $p$) in order to say that our central map lifts some operation of commutative algebra. We speculate that, on the contrary, our map \emph{is} a lift of some operation of commutative $\bF_1$-algebra.\end{remark}

\end{document}